\theoremstyle{plain}
\newtheorem{lem}{Lemma}[section]
\newtheorem{cor}[lem]{Corollary}
\newtheorem{prop}[lem]{Proposition}
\newtheorem{thm}[lem]{Theorem}
\theoremstyle{definition}
\newtheorem{rem}[lem]{Remark}
\newtheorem{dfn}[lem]{Definition}
\newtheorem{ntt}[lem]{Notation}
\newcommand{\Spec}{\operatorname{Spec}}   
\newcommand{\Sm}{\text{\bf{Sm}}}   
\newcommand{\Os}{\mathcal{O}}
\newcommand{\PP}{\mathbb{P}}
\newcommand{\A}{\mathbb{A}}
\newcommand{\gSL}{\operatorname{\mathbf{SL}}}
\newcommand{\Ms}{M}
\newcommand{\Gm}{\mathbb{G}_m}
\newcommand{\la}{\langle}
\newcommand{\ra}{\rangle}
\newcommand{\ZZ}{\mathbb{Z}}
\newcommand{\coker}{\operatorname{coker}}
\newcommand{\colim}{\operatorname{colim}}
\newcommand{\supp}{\operatorname{supp}}
\newcommand{\divv}{\operatorname{div}}
\newcommand{\charr}{\operatorname{char }}
\begin{document}
\begin{abstract}
Given a field $k$ of characteristic zero and $n\geqslant 0$, we
prove that $H_0(\mathbb{Z}F(\Delta_k^{\bullet},\Gm^{\wedge
n}))=K_n^{MW}(k)$, where $\mathbb{Z}F_{*}(k)$ is the category of
linear framed correspondences of algebraic $k$-varieties, introduced
by Garkusha--Panin in~\cite{GPLinear}
(see~\cite[\S7]{GPFramedMotives} as well), and $K_*^{MW}(k)$ is the
Milnor--Witt $K$-theory of the base field $k$.
\end{abstract}

\title{Framed correspondences and the Milnor--Witt K-theory}
\author{Alexander Neshitov}
\address{Alexander Neshitov, Steklov Mathematical Institute,  St.Petersburg}
\email{alexander.neshitov@gmail.com}
\thanks{The work was supported by the Russian Science Foundation grant 14-11-00456}
\maketitle

\section{Introduction}
The theory of framed correspondences and framed (pre)sheaves was
developed by Voevodsky in~\cite{VoevUnpublished}. Using this theory,
Garkusha and Panin introduce and study in~\cite{GPFramedMotives}
framed motives of algebraic varieties. As an application, they
construct an explicit fibrant replacement of the suspension
bispectrum $\Sigma_{S^1}^{\infty}\Sigma_{\Gm}^{\infty}X_{+}$ of a
smooth algebraic variety $X\in\Sm_k$. As a consequence, this allows
to reduce the computation of the motivic cohomotopy groups
$\pi^{n,n}(\Sigma_{S^1}^{\infty}\Sigma_{\Gm}^{\infty}S^0)(k)$ to the
computation of the zeroth homology group
$H_0(\mathbb{Z}F(\Delta_k^{\bullet},\Gm^{\wedge n}))$ of an explicit
simplicial abelian group $\mathbb{Z}F(\Delta^{\bullet}_k,\Gm^{\wedge
n})$ (see~\cite[Corollary 10.7]{GPFramedMotives}).

Let $k$ be a field of characteristic zero. We show in this paper
that $H_0(\mathbb{Z}F(\Delta_k^{\bullet},\Gm^{\wedge n}))$ is the
$n$th Milnor--Witt group $K_n^{MW}(k)$ of $k$. This result together
with~\cite[Corollary 10.7]{GPFramedMotives} recovers the celebrated
theorem of Morel~\cite[Theorem 5.40]{Morel} for such fields.

\subsection*{Framed correspondences}

Here we briefly sketch the definition of framed correspondences. We
refer the reader to~\cite[\S2]{GPFramedMotives} for more details.

\begin{dfn}
Let $U$ be a smooth scheme and $Z\subseteq U$ a closed subset of codimension $n$. The set of regular functions $\phi_i,\ldots,\phi_n\in k[U]$ is called a framing of $Z$ in $U$ if $Z$ coincides with the closed subset defined by equations $\phi_1=0,\ldots,\phi_n=0.$
\end{dfn}
\begin{dfn}
An explicit framed correspondence $c=(U,\phi,f)$ of level $n$ from $X$ to $Y$ consists of the following data:
\begin{itemize}
\item a closed subset $Z$ of $\A^n_X$ which is finite over $X$,
\item an \'etale neighborhood $\alpha\colon U\to\A^n_X$ of $Z$,
\item a framing $\phi=(\phi_1,\ldots,\phi_n)$ of $Z$ in $U$,
\item a morphism $f\colon U\to Y.$
\end{itemize}
\end{dfn}
The subset $Z$, also denoted by $\supp(c)$, is called the support of
the explicit correspondence $c=(U,\phi,f)$. Two explicit level $n$
correspondences $(U,\phi,f)$ and $(U',\phi',f')$ are equivalent if
their supports coincide and in some open neighborhood of $Z$ in
$U\times_{\A^n_X}U'$ one has $\phi\circ pr=\phi'\circ pr'$ and
$f\circ pr=f'\circ pr'$. By definition, a framed correspondence of
level $n$ is an equivalence class of explicit framed correspondences
of level $n$. We shall write $Fr_n(X,Y)$ to denote the set of framed
correspondences of level $n$ from $X$ to $Y$.

There is a well defined composition~\cite[\S2]{GPFramedMotives} of
correspondences:
\[Fr_n(X,Y)\times Fr_m(Y,Z)\to Fr_{n+m}(X,Z).\]
The composition of $(U,\phi,f)\in Fr_n(X,Y)$ and $(U',\phi',f')\in
Fr_m(X,Y)$ is given by the triple $(U\times_Y U',(\phi\circ
pr_U,\phi'\circ pr_{U'}),f'\circ pr_{U'})$.

\subsection*{Linear Framed Correspondences}
Here we sketch the construction of the category of linear framed
correspondences $\mathbb{Z}F_*(k)$ introduced in~\cite{GPLinear}
(see also~\cite[\S7]{GPFramedMotives}).
\begin{dfn}\label{13}
Given $n\geqslant 0$ we define $\ZZ F_n(X,Y)$ as a quotient of the
free abelian group $\ZZ Fr_n(X,Y)$ generated by $Fr_n(X,Y)$. Namely
one sets,
\[
\ZZ F_n(X,Y)=\ZZ Fr_n(X,Y)/\langle c-c_1-c_2\mid
\supp(c)=\supp(c_1)\sqcup\supp(c_2) \rangle.
\]
\end{dfn}
Note that $\ZZ F_n(X,Y)$ is a free abelian group generated by correspondences with connected support. The composition of correspondences descends to a composition 
\[\ZZ F_n(X,Y)\times\ZZ F_m(Y,Z)\to \ZZ F_{n+m}(X,Z).\]
\begin{dfn}
The category of linear framed correspondences $\mathbb{Z}F_*(k)$ is
an additive category whose objects are those of $\Sm_k$ and for
every $X,Y\in\Sm_k$
\[Hom_{\ZZ F_*(k)}(X,Y)=\oplus_{n\geqslant 0}\ZZ F_n(X,Y).\]
\end{dfn}

For any $Y\in \Sm_k$ consider a correspondence $\sigma_Y=(\A^1_Y,id,pr_Y)\in Fr_1(Y,Y)$. Note that for any $c=(U,\phi,f)\in Fr_n(X,Y)$ the composition $\sigma_Y\circ c=(U\times_k\A^1,(\phi,id_{\A^1}),f\circ pr_U)\in Fr_{n+1}(X,Y).$

For any $X,Y\in\Sm_k$ we set
\[\ZZ F(X,Y)=\colim(\ZZ F_*(X,Y)\stackrel{\sigma_Y}\to\ZZ F_{*+1}(X,Y)\stackrel{\sigma_Y}\to\ldots).\]

We consider the simplicial abelian group $\ZZ
F(\Delta^{\bullet}_k\times X,Y)$ where $\Delta^{\bullet}_k$ is the standard
cosimplicial object in $\Sm_k$ defined as follows: $\Delta^n_k$ is the
hyperplane defined by the equation $x_0+\ldots+x_n=1$ in $\A^{n+1}_k$
with standard coface and codegeneracy maps.
In particular we will be interested in its zeroth homology group
\[H_0(\ZZ F(\Delta^{\bullet}_k\times X,Y))=\coker (\ZZ F(\A^1_k\times X,Y)\stackrel{i_0^*-i_1^*}\to\ZZ F(X,Y)).\]

\begin{rem}
Since $\sigma_Y$ does not commute with the composition map, there is
no well-defined composition $\ZZ F(X,Y)\times \ZZ F(Y,Z)\to \ZZ
F(X,Z)$. Nevertheless applying the simplicial resolution, we get that the composition defined in~\ref{13} descends to a well-defined composition
\[H_0(\ZZ F(\Delta^{\bullet}_k\times X,Y))\times H_0(\ZZ F(\Delta^{\bullet}_k\times Y,Z))\to H_0(\ZZ F(\Delta^{\bullet}_k\times X,Z)).\]
\end{rem}

Let $\Gm$ be the pointed scheme $(\A^1\setminus\{0\},1)$.
For $n\geqslant 0$ and $1\leqslant i\leqslant n$ let $f_i\colon\Gm^{n-1}\to\Gm^n$ be an inclusion given by $f_i(x_1,\ldots,x_{n-1})=(x_1,\ldots,x_{i-1},1,x_{i},\ldots x_{n-1})$. Define the simplicial abelian group
\[\ZZ F(\Delta^{\bullet}_k,\Gm^{\wedge n})=\coker (\oplus_{i=1}^n\ZZ F(\Delta^{\bullet}_k,\Gm^{n-1})\stackrel{f_{i*}}\to \ZZ F(\Delta^{\bullet}_k,\Gm^{n})).\]

The main result of the paper states that
\[H_0(\ZZ F(\Delta^{\bullet}_k,\Gm^{\wedge n}))=K_n^{MW}(k).\]
For brevity we will say that two framed correspondences $c,c'\in
Fr_*(k,Y)$ are equivalent (and write $c\sim c'$) if their classes
coincide in $H_0(\ZZ F(\Delta^{\bullet}_k,Y)).$

The paper is organized as follows: The
section~\ref{preliminary_lemmas} contains some auxiliary results. In
the section~\ref{MultStruct} there is constructed the multiplication
which endows $H_0(\ZZ F(\Delta^{\bullet}_k,\Gm^{\wedge *}))$ with a
structure of a graded ring and a transfer morphism $tr_{L/k}\colon
H_0(\ZZ F(\Delta^{\bullet}_L,\Gm^{\wedge *}))\to H_0(\ZZ
F(\Delta^{\bullet}_k,\Gm^{\wedge *}))$ for finite field extensions
$k\subseteq L.$ Section~\ref{MovingSection}
and~\ref{LevelOneSection} are devoted to some reduction results
needed in the proof of the main result. In the section~\ref{cortok} the construction of the map
$\Phi_*\colon H_0(\ZZ F(\Delta^{\bullet}_k,\Gm^{\wedge *}))\to
K_*^{MW}(k)$ is given. In the section~\ref{m0Section} we prove that
$\Phi_0$ is a ring isomorphism. The section~\ref{ktocor} gives the
construction of the opposite direction map $\Psi_*\colon
K_*^{MW}(k)\to H_0(\ZZ F(\Delta^{\bullet}_k,\Gm^{\wedge *})).$ In
the section~\ref{IsomorphismSection} we check that $\Phi_*$ and
$\Psi_*$ are mutually inverse graded ring isomorphisms.

\section*{Acknowledgement}
I would like to express my sincere gratitude to Professor Ivan Panin
for statement of the problem, constant attention to this work and
numerous discussions concerning the subject of this paper.

\section{Preliminary lemmas}\label{preliminary_lemmas}
For a smooth $Y$ we will need the following facts about
\[
H_0(\ZZ F(\Delta^{\bullet}_k,Y))=\coker (\ZZ F(\A^1_k,Y)\stackrel{i_0^*-i_1^*}\to\ZZ F(\Spec k,Y))
\]

\begin{lem}\label{retract}
Suppose $c=(U,\phi,f)$ represents a framed correspondence in $Fr_*(\Spec k,Y)$ such that $Z=supp(c)$ a single point.
Then there is an equivalent framed correspondence $c'=(U'\phi',f')$ such that there is a projection $U'\to\Spec k(Z)$.
\end{lem}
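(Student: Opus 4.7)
The plan is to replace $U$ by an étale neighborhood of $Z$ which comes equipped with a morphism to $\Spec k(Z)$, then to pull back the framing $\phi$ and the map $f$ along this replacement, and finally to verify equivalence via the open-neighborhood criterion in the definition. The crucial input is that $Z$ is a single point finite over $\Spec k$, so $Z = \Spec k(Z)$ for a finite extension $k(Z)/k$, which is separable since $\charr k = 0$.

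For the construction, form $V := U \times_{\Spec k} \Spec k(Z)$, finite étale over $U$. The fibre of $V \to U$ above the unique closed point $z \in Z$ is $\Spec(k(Z) \otimes_k k(Z))$; by separability this splits as a finite product of fields, one of whose factors is $k(Z)$ itself via the multiplication map. Let $z' \in V$ be the corresponding point, whose residue field equals $k(z) = k(Z)$. Choose an affine open $U' \subset V$ containing $z'$, disjoint from the other points in the fibre over $z$, and small enough that the pullback of $\phi$ has scheme-theoretic vanishing locus exactly $\{z'\}$ in $U'$ (possible because $\phi$ cuts out $Z$ in $U$ and étale pullback preserves scheme-theoretic zero loci of regular sections). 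Let $g : U' \to U$ be the induced étale map. Then $\alpha \circ g : U' \to \A^n_k$ is an étale neighborhood of $Z$, and $U'$ carries the projection to $\Spec k(Z)$ inherited from the second factor of $V$; set $\phi' := \phi \circ g$ and $f' := f \circ g$.

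To verify $c \sim c'$, set $W := U \times_{\A^n_k} U'$ with projections $pr$ and $pr'$, and note that $(g, \mathrm{id}_{U'}) : U' \to W$ is a section $s$ of $pr'$. Since $pr'$ is the base change of the étale map $\alpha$, the section $s$ is an open immersion whose image is an open neighborhood of the diagonal copy of $Z$ in $W$. On $s(U')$ the identity $pr = g \circ pr'$ yields $\phi \circ pr = \phi' \circ pr'$ and $f \circ pr = f' \circ pr'$, supplying the required agreement on an open neighborhood of $Z$. I expect the main obstacle to be the construction of $U'$: it hinges on the separability of $k(Z)/k$ afforded by characteristic zero, which is precisely what permits the diagonal factor of $k(Z) \otimes_k k(Z)$ to be split off from the fibre over $z$. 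Once this is in hand, the framing property of $\phi'$ reduces to Zariski shrinking and the equivalence check becomes formal.
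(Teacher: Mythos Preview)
Your proof is correct. The paper's argument is terser: it invokes the fact that the Henselization $\Spec \Os_{\A^n,Z}^{hens}$ admits a retraction to $\Spec k(Z)$ (a standard consequence of separability of $k(Z)/k$), then approximates this by a finite-level \'etale neighborhood $U'\to U$ carrying the same retraction, and declares the equivalence without further comment. Your construction is essentially the concrete content of that Henselization statement: base-changing $U$ to $k(Z)$ and isolating the diagonal component of the fibre over $z$ is exactly how one builds the retraction at the level of \'etale neighborhoods. What your version buys is that it is self-contained and makes the role of characteristic zero (separability of $k(Z)/k$) explicit, and you also spell out the verification of equivalence via the section $s\colon U'\to U\times_{\A^n_k}U'$, which the paper leaves implicit. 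Conversely, the paper's phrasing via Henselization is shorter and signals that this is a routine maneuver rather than something requiring care.
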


\begin{proof}
There is a projection from the Henselization $\Spec \Os_{\A^n,Z}^{hens}\to\Spec k(Z).$ Therefore there is an \'etale neighborhood $U'$ of $Z$ with a projection $U'\to\Spec k(Z)$ and an \'etale map $\beta\colon U'\to U$. Then $c$ is equivalent to $(U'\phi\circ\beta,f\circ\beta).$
\end{proof}

\begin{lem}\label{SL}
Let $c=(U,\phi,f)$ represent a framed correspondence in $Fr_n(\Spec k,Y)$. Let $L$ be a field and assume there is a regular map $U\to\Spec L$. Let $A\in\gSL_n(L)$. Then the classes of $c$ and $c'=(U,A\cdot\phi,f)$ coincide in $H_0(\ZZ F(\Delta^{\bullet}_k,Y))$
\end{lem}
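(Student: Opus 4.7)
The plan is to reduce to elementary matrices and then exhibit an explicit $\A^1$-homotopy. For $n \leqslant 1$ the group $\gSL_n(L)$ is trivial and there is nothing to prove, so I may assume $n \geqslant 2$. Over the field $L$, the group $\gSL_n(L)$ is generated by elementary matrices $E_{ij}(a)$ with $i \neq j$ and $a \in L$; moreover the binary relation ``$(U,\phi,f)$ is equivalent to $(U,A\cdot\phi,f)$ in $H_0(\ZZ F(\Delta^\bullet_k,Y))$'' is transitive in $A$, and hence closed under products. It therefore suffices to establish the claim when $A = E_{ij}(a)$ for some elementary generator.

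For such an $A$, I would pull back $a \in L$ along the given morphism $U \to \Spec L$ to a regular function $a \in \Os(U)$, and consider the one-parameter family of matrices $A(t) = E_{ij}(ta) \in \gSL_n(\Os(U \times_k \A^1_k))$. Form the triple
\[
h = \bigl(U \times_k \A^1_k,\; A(t)\cdot\phi,\; f \circ pr_U\bigr),
\]
where the étale neighborhood $U \times_k \A^1_k \to \A^n_{\A^1_k}$ is obtained from $U \to \A^n_k$ by base change along $\A^1_k \to \Spec k$. Because $A(t)$ has determinant $1$ and is in particular invertible over $\Os(U \times_k \A^1_k)$, the zero locus of $A(t)\cdot\phi$ in $U \times_k \A^1_k$ coincides with that of $\phi$, namely $Z \times_k \A^1_k$; the latter is finite over $\A^1_k$ since $Z$ is finite over $\Spec k$. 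Thus $h$ is a bona fide framed correspondence of level $n$ from $\A^1_k$ to $Y$.

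Restricting $h$ to $t = 0$ gives $A(0) = I$, so $i_0^* h = (U,\phi,f) = c$, while restricting to $t = 1$ gives $i_1^* h = (U, E_{ij}(a)\cdot\phi, f) = c'$. Consequently $c - c' = (i_0^* - i_1^*)(h)$ lies in the image of $i_0^* - i_1^*$, and the classes of $c$ and $c'$ agree in $H_0(\ZZ F(\Delta^\bullet_k,Y))$. I do not anticipate a serious obstacle; the only point deserving a moment of care is the verification that multiplying the framing by the invertible matrix $A(t)$ preserves the support, which ensures that $h$ is a genuine framed correspondence and not merely a formal construction.
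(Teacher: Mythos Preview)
Your proof is correct and follows essentially the same approach as the paper's. The paper phrases the reduction in terms of root subgroups $U_\alpha(L)$ of the simply-connected group $\gSL_n$ (which for $\gSL_n$ are exactly your elementary subgroups $E_{ij}$), then builds the identical $\A^1$-homotopy $d=(U\times\A^1,\,A(t)\cdot\phi,\,f\circ pr_U)$ with support $Z\times\A^1$.
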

\begin{proof}
Since $\gSL_n$ is simply-connected, $\gSL_n(L)$ is generated by $U_{\alpha}(L)$ where $U_{\alpha}$ denote the root subgroups. For any $A_1\in U_{\alpha}(L)$ there us a rational curve $A(t)\colon\A^1_{L}\to U_{\alpha}$ such that $A(0)=1$ and $A(1)=A_1.$ Note that the data $d=(U\times\A^1,A(t)\cdot\phi,f\circ pr_U)$ define a correspondence in $Fr_n(\A^1,Y)$ with $\supp(d)=Z\times\A^1$. Since $i_0^*(d)=(U,\phi,f)$ and $i_1^*(d)=(U,A_1\cdot\phi,f)$, the lemma follows.
\end{proof}

\begin{lem}\label{nilpotent}
Let $(U,\phi,f)$ represent a correspondence in $Fr_n(\Spec k,Y)$. Assume that $U$ is affine. Let $\delta\in k[U]$ be a regular function such that $\delta+(\phi_2,\ldots \phi_n)$ is nilpotent in the quotient ring $k[U]/(\phi_2,\ldots,\phi_n)$.
Then the classes of correspondences $c=(U,\phi,f)$ and $c'=(U,(\phi_1+\delta,\phi_2,\ldots,\phi_n),f)$ coincide in $H_0(\ZZ F(\Delta^{\bullet}_k,Y))$.
\end{lem}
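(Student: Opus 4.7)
The plan is to build an explicit $\A^1$-homotopy by linearly interpolating $\phi_1$ with $\phi_1+\delta$. With $t$ the coordinate on $\A^1_k$, I would consider the candidate data
\[
d = \bigl(U \times_k \A^1,\ (\phi_1 + t\delta, \phi_2, \ldots, \phi_n),\ f \circ pr_U\bigr).
\]
Assuming $d$ defines an element of $Fr_n(\A^1_k, Y)$, its restrictions at $t=0,1$ are $c$ and $c'$ on the nose, so $c-c'$ lies in the image of $i_0^*-i_1^*$, which proves $c\sim c'$ in $H_0(\ZZ F(\Delta^{\bullet}_k,Y))$. The whole argument thus reduces to checking that $d$ is a valid framed correspondence.

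The étale neighbourhood $U\to\A^n_k$ base-changes to the étale map $U\times\A^1\to\A^n_{\A^1}$, the functions $\phi_1+t\delta,\phi_2,\ldots,\phi_n$ are regular, and $f\circ pr_U$ is a morphism to $Y$, so the only nontrivial condition to verify is that the zero locus of $(\phi_1+t\delta,\phi_2,\ldots,\phi_n)$ in $U\times\A^1$ is finite over $\A^1$. This is where the nilpotency hypothesis is designed to intervene: since $\delta^N\in(\phi_2,\ldots,\phi_n)$ for some $N\geqslant 1$, the function $\delta$ vanishes set-theoretically on the common zero locus of $\phi_2,\ldots,\phi_n$, so the common zero locus of $(\phi_1+t\delta,\phi_2,\ldots,\phi_n)$ in $U\times\A^1$ coincides set-theoretically with that of $(\phi_1,\phi_2,\ldots,\phi_n)$, namely $Z\times\A^1$. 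Since $Z$ is finite over $\Spec k$, $Z\times\A^1$ is finite over $\A^1$, as required.

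The only subtle point, and what the nilpotency of $\delta$ modulo $(\phi_2,\ldots,\phi_n)$ is tailored for, is this set-theoretic identification of the support of $d$ with $Z\times\A^1$; without it, the interpolated support could spread out and finiteness over $\A^1$ would fail. Everything else in the verification is a routine base change of the original correspondence data, so once the support computation is done the conclusion $c\sim c'$ is immediate.
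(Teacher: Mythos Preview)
Your proposal is correct and follows the same approach as the paper: construct the linear homotopy $d=(U\times\A^1,(\phi_1+t\delta,\phi_2,\ldots,\phi_n),f\circ pr_U)$ and use the nilpotency hypothesis to identify its support with $Z\times\A^1$. The paper carries out the support computation via explicit ideal manipulations (showing $k[U][t]/(\phi_1^m,\phi_2,\ldots,\phi_n)$ surjects onto $k[U][t]/(\phi_1+t\delta,\phi_2,\ldots,\phi_n)$ and separately checking that the image in $\A^n_{\A^1}$ is closed), whereas your set-theoretic observation that $\delta$ vanishes on $V(\phi_2,\ldots,\phi_n)$ handles both points at once; the underlying idea is identical.
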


\begin{proof}
Note that the radicals of the ideals $(\phi_1+\delta,\ldots\phi_n)$ and $(\phi_1,\ldots,\phi_n)$ coincide in $k[U]$, so the data $(U,\phi_1+\delta,\ldots, \phi_n,f)$ defines a correspondence in $Fr_n(\Spec k,Y)$.

Let us check that the data $d=(U\times\A^1,(\phi_1+\delta t,\phi_2,\ldots, \phi_n),f\circ pr_U)$ defines a framed correspondence in $Fr_n(\A^1,Y)$.

First we need to check that the support of $(\phi_1+\delta t,\phi_2,\ldots, \phi_n)$ is finite over $\A^1$. i.e.
$k[U][t]/(\phi_1+\delta t,\phi_2,\ldots,\phi_n)$ is a finite $k[t]$-module. Note that for any $m$ we have that $\phi_1^m-(\delta t)^m$ is divisible by $\phi_1-\delta t$, so there is an inclusion of ideals
\[(\phi_1^m-(\delta t)^m,\phi_2,\ldots, \phi_n)\subseteq (\phi_1-\delta t,\phi_2,\ldots,\phi_n)\]
By assumption, there is $m$ such that $\delta^m\in(\phi_2\ldots \phi_n).$ Then the ideals $(\phi_1^m-(\delta t)^m,\phi_2,\ldots, \phi_n)$ and $(\phi_1^m,\phi_2,\ldots, \phi_n)$ coincide in $k[U][t]$. Thus there is a surjection
\[
k[U][t]/(\phi_1^m,\phi_2,\ldots, \phi_n)\to k[U][t](\phi_1-\delta t,\phi_2,\ldots,\phi_n)\to 0
\]
Since $k[U]/(\phi_1,\ldots,\phi_n)$ is finite over $k$, then $k[U]/(\phi_1^m,\ldots,\phi_n)$ is finite over $k$, hence $k[U][t]/(\phi_1^m,\phi_2,\ldots, \phi_n)$ is finite over $k[t]$. Then $k[U][t](\phi_1-\delta t,\phi_2,\ldots,\phi_n)$ is also finite over $k[t]$.

Second we need to check that the equations $(\phi_1+\delta t,\phi_2,\ldots,\phi_n)$ define a closed subset in $\A^n\times\A^1$, i.e. the composition $k[\A^n][t]\to k[U][t]\to k[U][t]/(\phi_1+\delta t,\phi_2,\ldots,\phi_n)$ is surjective.
Again, we take $m$ such that $k[U][t]/(\phi_1^m,\phi_2,\ldots, \phi_n)$ covers $k[U][t](\phi_1-\delta t,\phi_2,\ldots,\phi_n)$. Then the equations $(\phi_1^m,\phi_2,\ldots, \phi_n)$ have the same support as $\phi$, therefore the composition $k[\A^n]\to k[U]\to k[U]/(\phi_1^m,\phi_2,\ldots, \phi_n)$ is surjective, hence the composition $k[\A^n][t]\to k[U][t]/(\phi_1+\delta t,\phi_2,\ldots,\phi_n)$ is surjective.

Then the data $d$ defines a correspondence in $Fr_n(\A^1,Y)$ with $i_0^*(d)=c$ and $i_1^*(d)=c'.$
\end{proof}

\begin{lem}\label{multiplication}
Let $c=(U,\phi,f)$ represent a framed correspondence in $Fr_n(\Spec k,Y)$ with $U$ affine and $Z=\supp(c)$ is a single point.
Denote by $\Ms_{Z,U}$ the maximal ideal of $k[U]$ defining $Z$. Suppose $m\in\Ms_{Z,U}$

Then there is an open neighborhood $U'$ of $Z$ in $U$ such that $c$ is equivalent to the correspondence
$c'=(U',(\phi_1(1+m),\phi_2,\ldots\phi_n),f)$ in $H_0(\ZZ F(\Delta^{\bullet}_k,Y))$.
\end{lem}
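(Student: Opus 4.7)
The plan is to construct an explicit $\A^1$-homotopy between $c$ and $c'$. On $U\times\A^1_k$, consider the candidate framing
\[
\phi_t := (\phi_1(1+tm),\phi_2,\ldots,\phi_n)
\]
together with the morphism $f\circ pr_U\colon U\times\A^1\to Y$. The fibers at $t=0$ and $t=1$ recover $\phi$ and $(\phi_1(1+m),\phi_2,\ldots,\phi_n)$ respectively, so the task is to realize this family as a framed correspondence in $Fr_n(\A^1_k,Y)$ after a suitable shrinking of $U\times\A^1$.

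The zero locus of $\phi_t$ on $U\times\A^1$ decomposes as
\[
V(\phi_t) = (Z\times\A^1) \sqcup E, \qquad E := V(1+tm,\phi_2,\ldots,\phi_n)\subseteq U\times\A^1.
\]
The two pieces are disjoint because on $Z$ one has $m=0$, so $1+tm=1$ is a unit there; this is precisely where the hypothesis $m\in\Ms_{Z,U}$ enters. The piece $Z\times\A^1$ is finite over $\A^1$, whereas $E$ in general is not, so I shrink to get rid of it: set $W := (U\times\A^1)\setminus E$. Then $W$ is open, contains $Z\times\A^1$, and $\phi_t|_W$ cuts out exactly $Z\times\A^1$ in $W$.

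With this in hand, the data $d := (W,\phi_t|_W,f\circ pr_U|_W)$ defines a framed correspondence in $Fr_n(\A^1_k,Y)$: the étale structure map $W\to\A^n_k\times\A^1_k$ is the restriction of $\alpha\times id_{\A^1}$, the support $Z\times\A^1$ is finite over $\A^1_k$, and $f\circ pr_U$ supplies the morphism to $Y$. Pulling $d$ back along $i_0$ recovers $c$, since $E\cap(U\times\{0\})=\emptyset$ and so $W|_{t=0}=U$ with framing $\phi$; pulling back along $i_1$ produces $c'$ on the open neighborhood $U' := U\setminus(V(\phi_2,\ldots,\phi_n)\cap V(1+m))$ of $Z$, which contains $Z$ because $(1+m)(Z)=1$. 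This gives $c\sim c'$ in $H_0(\ZZ F(\Delta^{\bullet}_k,Y))$.

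The only substantive point is the disjointness of $E$ from $Z\times\A^1$ needed to control the support after shrinking; once that is observed, everything else is a routine verification of the axioms of a framed correspondence on $W$ and of the two fiber specializations.
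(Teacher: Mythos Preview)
Your proof is correct and follows essentially the same approach as the paper's: both construct the $\A^1$-homotopy $d$ with framing $((1+tm)\phi_1,\phi_2,\ldots,\phi_n)$ on an open subset of $U\times\A^1$, the only cosmetic difference being that the paper removes all of $V(1+tm)$ while you remove only the extra component $E=V(1+tm,\phi_2,\ldots,\phi_n)$, and your disjointness argument via $m|_Z=0$ is a cleaner version of the paper's ideal computation showing $(\phi_1,\ldots,\phi_n)+(1+mt)=(1)$.
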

\begin{proof}
Let us check that the data $d=(U\times\A^1\setminus Z(1+mt),((1+mt)\phi_1,\phi_2,\ldots\phi_n),f\circ pr_U)$ define
a correspondence in $Fr_n(\A^1,k).$

First, we check that the support of equations $\phi'=((1+mt)\phi_1,\phi_2,\ldots\phi_n)$ is finite over $\A^1$, i.e. the quotient ring $k[U][t,(1+mt)^{-1}]/((1+mt)\phi_1,\phi_2,\ldots,\phi_n)$ is finite over $k[t]$. Note that
\[
k[U][t]_{1+mt}/((1+mt)\phi_1,\phi_2,\ldots,\phi_n)=k[U][t]_{1+mt}/(\phi_1,\phi_2,\ldots\phi_n)
\]
Thus $\phi'$ defines an open subset of  $Z\times\A^1$ defined be the $1+mt\neq 0$.
Show that this open subset coincides with the whole $Z\times\A^1,$ i.e. the ideal $I=(\phi_1,\ldots,\phi_n)+(1+mt)$ equals $(1)$ in $k[U][t]$. Note that the radical of the ideal $(\phi)$ coincides with $M_Z$: $\sqrt{(\phi_1,\ldots\phi_n)}=\Ms_{Z,U},$ so $m^r\in(\phi_1,\ldots\phi_n)$ for big enough $r$.
We may assume that $r$ is odd, so $1+(mt)^r$ is divisible by $(1+mt).$ Then $1+(mt)^r\in (\phi_1,\ldots,\phi_n)+(1+mt)$. Since $m^rt^r\in(\phi_1,\ldots,\phi_n)$ we get that $1\in I$.

As we have checked, the equations $\phi'$ define the closed subset $Z\times\A^1$ in $\A^n\times\A^1$. Then $d$ defines an element in $Fr_n(\A^1,Y)$ and $i_0^*(d)=c$ and $i_1^*(d)=c'.$

\end{proof}

\begin{lem}\label{ratroots}
Suppose $p(x),q(x)\in k[x]$ are degree $n$ polynomial with the same leading term. Then two correspondences
$c=(\A^1,p(x),pr_k)$ and $c'=(\A^1,q(x),pr_k)$ are equivalent in $H_0(\ZZ F(\Delta^{\bullet}_k,\Spec k)).$

Moreover, if $p(x)$ has separable rational roots $\alpha_1,\ldots,\alpha_n$,
then the correspondence $(\A^1,p(x),pr_k)$ is equivalent to the sum $\sum_{i=1}^n(\A^1,p'(\alpha_i)x,pr_k)$.
\end{lem}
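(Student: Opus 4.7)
The plan is to handle the two claims separately, the first by a linear homotopy and the second by combining the additivity of Definition~\ref{13}, Lemma~\ref{multiplication}, and a translation homotopy. For the first claim I would take the straight-line homotopy $H(x,t) = (1-t)p(x) + tq(x)$. Because $p$ and $q$ share the same leading term $a_n x^n$, the coefficient of $x^n$ in $H$ is the constant $a_n$, so $k[x,t]/(H)$ is $k[t]$-free of rank $n$ and the zero locus of $H$ is closed and finite of degree $n$ over $\A^1_t$. The triple $(\A^1_x \times \A^1_t, H, pr_k)$ therefore defines a correspondence in $Fr_1(\A^1, \Spec k)$ whose fibers over $t=0$ and $t=1$ are $c$ and $c'$.

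For the \emph{moreover} part, since $\alpha_1,\ldots,\alpha_n$ are distinct rational points I would choose disjoint affine opens $V_i \subseteq \A^1$ with $\alpha_i \in V_i$ and invoke the additivity of Definition~\ref{13} to write $(\A^1, p(x), pr_k) = \sum_{i=1}^n c_i$ in $\ZZ F_1(\Spec k, \Spec k)$, where $c_i = (V_i, p|_{V_i}, pr_k)$ has support $\{\alpha_i\}$. It then suffices to show $c_i \sim (\A^1, p'(\alpha_i)x, pr_k)$ for each $i$. On $V_i$ the polynomial factors as $p(x) = (x-\alpha_i)g(x)$ with $g(\alpha_i) = p'(\alpha_i) \neq 0$, so setting $m = g(x)/p'(\alpha_i) - 1 \in \Ms_{\alpha_i, V_i}$, Lemma~\ref{multiplication} applied to $(V_i, p'(\alpha_i)(x-\alpha_i), pr_k)$ produces a smaller open $V_i'$ with
\[
(V_i, p'(\alpha_i)(x-\alpha_i), pr_k) \sim (V_i', p'(\alpha_i)(x-\alpha_i)(1+m), pr_k) = (V_i', p(x), pr_k),
\]
and the right-hand side agrees with $c_i$ by the definition of equivalence of explicit framed correspondences. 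Enlarging the étale neighborhood back to $\A^1$ via same-support equivalence then gives $c_i \sim (\A^1, p'(\alpha_i)(x-\alpha_i), pr_k)$.

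To conclude, I would apply the translation homotopy $(\A^1_x \times \A^1_t, p'(\alpha_i)(x - t\alpha_i), pr_k)$; its support is the graph $\{x = t\alpha_i\}$, finite of degree one over $\A^1_t$, and its restrictions to $t=0$ and $t=1$ are respectively $(\A^1, p'(\alpha_i)x, pr_k)$ and $(\A^1, p'(\alpha_i)(x-\alpha_i), pr_k)$. Summing over $i$ yields the desired identity. The main obstacle is the bookkeeping of the opens $V_i, V_i'$ and checking that same-support equivalence freely permits passage between them; no single step is conceptually subtle once Lemma~\ref{multiplication} is in hand.
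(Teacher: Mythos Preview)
Your argument is correct and follows the same outline as the paper: the linear homotopy $p(x)+t(q(x)-p(x))$ for the first claim, then additivity over the roots followed by a local linearization and the translation homotopy $p'(\alpha_i)(x-t\alpha_i)$ for the second. The one substantive difference is the lemma you invoke for the local step: the paper cites Lemma~\ref{nilpotent}, writing $p(x)=p'(\alpha_i)(x-\alpha_i)+\delta_i$ with $\delta_i\in((x-\alpha_i)^2)$, whereas you factor $p(x)=p'(\alpha_i)(x-\alpha_i)(1+m)$ with $m\in\Ms_{\alpha_i}$ and apply Lemma~\ref{multiplication}. Your choice is in fact the more accurate one: for a level-one correspondence the hypothesis of Lemma~\ref{nilpotent} requires $\delta$ to be nilpotent in $k[U]$ itself, which $(x-\alpha_i)^2h(x)$ is not, so the paper's citation is a small slip that your route avoids. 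Otherwise the two proofs are identical.
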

\begin{proof}
Since $p$ and $q$ have the same leading term, we have that $\deg(q-p)<n.$ Then for the data
$d=(\A^1\times\A^1,p(x)+(q(x)-p(x))t,pr_k)$ $\supp(d)$ is finite over $\Spec k[t]$, so $d$ defines an element of $Fr_1(\A^1,k)$ giving an equivalence between $c$ and $c'$.

When all roots of $p$ are separable and rational note that $(\A^1,p(x),pr_k)=\sum_{i=1}^n (U_i,p(x),pr_k)$ in $\ZZ F_1(\Spec k,\Spec k)$ where $U_i$ is an open neighborhood of the root $\alpha_i$ that does not contain other roots. Since $p(x)=p'(x_i)(x-x_i)+\delta_i$ where $\delta_i\in ((x-x_i)^2)$, by lemma~\ref{nilpotent} we have that $(U_i,p(x),pr_k)\sim (U_i,p'(x_i)(x-x_i),pr_k)=(\A^1,p'(x_i)(x-x_i),pr_k).$ Finally note that $\A^1,p'(x_i)(x-x_i),pr_k)\sim (\A^1,p'(x_i)x,pr_k)$ by means of elementary homotopy $(\A^1\times\A^1,p'(x_i)(x-tx_i),pr_k).$
\end{proof}

\begin{lem}\label{deform}
Suppose that $p(x),q(x)\in k[x]$ are degree $n$ polynomials with the same leading term, such that $p(0)=q(0)\neq 0$. Then the correspondences $(\Gm,x^dp(x),id)$ and $(\Gm,x^dq(x),id)$ in $Fr_1(\Spec k,\Gm)$ are equivalent in $H_0(\ZZ F(\Delta^{\bullet}_k,\Gm))$ for any integer $d\geqslant 0.$
\end{lem}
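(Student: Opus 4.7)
The plan is to construct an explicit $\A^1$-homotopy between the two framed correspondences by linearly interpolating between $p$ and $q$ in the parameter direction, mirroring the technique used in Lemma~\ref{ratroots} but now keeping track of the extra factor $x^d$ and, crucially, of the fact that the support must stay away from $\{x=0\}$ since the target is $\Gm$. Set
\[
r(x,t) = (1-t)\,p(x) + t\,q(x) \in k[x,t].
\]
Since $p$ and $q$ share the same leading coefficient $a_n \in k^{\times}$ and the same constant term $p(0) \in k^{\times}$, the polynomial $r(x,t)$, viewed as a polynomial in $x$ over $k[t]$, has degree exactly $n$ in $x$ with leading coefficient $a_n$ and constant-in-$x$ coefficient $p(0)$, both of which are units in $k$.

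I would then consider the datum
\[
d = \bigl(\Gm \times \A^1_t,\; x^d\, r(x,t),\; pr_{\Gm}\bigr),
\]
with $U = \Gm \times \A^1_t$ realized as an open subscheme of $\A^1_{\A^1_t}$ and $f = pr_{\Gm}\colon \Gm \times \A^1_t \to \Gm$, and verify it represents an element of $Fr_1(\A^1_k,\Gm)$. The zero locus of $x^d r(x,t)$ inside $U$ equals $V(r(x,t))$; since $r(0,t) = p(0)$ is a nonzero constant, this locus is disjoint from $\{x=0\}$, so it is already a closed subset of the full $\A^1_x \times \A^1_t$ contained in $\Gm \times \A^1_t$. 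Moreover, because $a_n \in k^{\times}$ is independent of $t$, the quotient $k[x,t]/(r(x,t))$ is free of rank $n$ over $k[t]$, so the support is finite over $\A^1_t$. Hence $d$ is a legitimate framed correspondence in $Fr_1(\A^1_k, \Gm)$, and the target morphism $pr_{\Gm}$ manifestly restricts to $id$ on both endpoints.

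Finally, pulling back along $i_0, i_1 \colon \Spec k \to \A^1_t$ yields
\[
i_0^*(d) = (\Gm,\, x^d p(x),\, id), \qquad i_1^*(d) = (\Gm,\, x^d q(x),\, id),
\]
which produces the desired equivalence in $H_0(\ZZ F(\Delta^{\bullet}_k, \Gm))$. The only substantive point in the argument is the simultaneous verification that the support is both closed in $\A^1_x \times \A^1_t$ and finite over $\A^1_t$; both requirements reduce to the nonvanishing of the constant and leading coefficients of $r(x,t)$ in $k$, and these are guaranteed respectively by the hypothesis $p(0) = q(0) \neq 0$ and by the agreement of the leading terms of $p$ and $q$. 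The exponent $d$ plays no role beyond providing the correct framing, since $x$ is invertible on $U$.
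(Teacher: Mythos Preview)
Your proof is correct and is essentially identical to the paper's: the paper writes $p(x)=ax^n+xp_1(x)+b$ and $q(x)=ax^n+xq_1(x)+b$ and interpolates the middle terms as $ax^n+x(tp_1(x)+(1-t)q_1(x))+b$, which is exactly your $r(x,t)=(1-t)p(x)+tq(x)$ up to swapping $t\leftrightarrow 1-t$. Your verification that the support stays inside $\Gm\times\A^1$ and is finite over $\A^1_t$ is the same as the paper's, just spelled out in slightly more detail.
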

\begin{proof}
Note that $p(x)=ax^n+xp_1(x)+b$ and $q(x)=ax^n+xq_1(x)+b$ for some polynomials $p_1,q_1\in k[x]$ such that $\deg p_1,\deg q_1\leqslant n-2$. Then the equation $ax^n+x(t p_1(x)+(1-t)q_1(x))+b=0$ defines a closed subset $Z$ $\A^1\times\A^1=\Spec k[x,t]$ which is finite over $\Spec k[t]$. Moreover, $Z$ is contained in
$\Spec k[x,x^{-1},t]=\Gm\times\A^1$. Then the data $d=(\Gm\times\A^1,x^d(ax^n+x(t p_1(x)+(1-t)q_1(x))+b),pr_{\Gm})$ defines a correspondence in $Fr_1(\A^1,\Gm)$ which gives a desired equivalence.
\end{proof}

\begin{lem}\label{addone}
Let $c=(U,\phi,f)\in Fr_n(k,Y)$ represent a correspondence with $\supp(c)=Z$ and $\gamma\in k[U]$.
Then $c$ is equivalent to the correspondence $(U\times\A^1,(\phi,x-\gamma),f\circ pr_U)$ where $x$ denotes the coordinate on the additional copy of $\A^1.$
\end{lem}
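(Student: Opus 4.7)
The plan is to exhibit a single explicit $\A^1$-homotopy in a new parameter $t$. Because $\ZZ F(\Spec k,Y)$ is formed as the colimit along the suspensions $\sigma_Y$, the class of $c$ coincides in $\ZZ F(\Spec k,Y)$, and \emph{a fortiori} in $H_0(\ZZ F(\Delta^\bullet_k,Y))$, with the class of
\[
\sigma_Y\circ c=\bigl(U\times\A^1_x,\,(\phi_1,\ldots,\phi_n,x),\,f\circ pr_U\bigr)\in Fr_{n+1}(\Spec k,Y).
\]
Thus it suffices to produce an equivalence $\sigma_Y\circ c\sim(U\times\A^1,(\phi,x-\gamma),f\circ pr_U)$ in $H_0$.

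For this I would form the linear interpolation
\[
d=\bigl(U\times\A^1_x\times\A^1_t,\;(\phi_1,\ldots,\phi_n,\,x-t\gamma),\;f\circ pr_U\bigr),
\]
viewed as data for a level $n{+}1$ correspondence from $\A^1_t$ to $Y$, with \'etale structure $\alpha\times\mathrm{id}\colon U\times\A^1_x\times\A^1_t\to\A^{n+1}_{\A^1_t}$ inherited from the \'etale neighborhood $\alpha\colon U\to\A^n_k$ of $Z$. Once $d$ is shown to lie in $Fr_{n+1}(\A^1_t,Y)$, restricting at $t=0$ and $t=1$ directly gives $i_0^*(d)=\sigma_Y\circ c$ and $i_1^*(d)=(U\times\A^1,(\phi,x-\gamma),f\circ pr_U)$, which is precisely the desired equivalence.

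The one genuine verification is that $\supp(d)$ is finite over $\A^1_t$ and closed in $\A^{n+1}_{\A^1_t}$. The zero locus of $(\phi,\,x-t\gamma)$ in $U\times\A^1_x\times\A^1_t$ is the graph of the morphism $Z\times\A^1_t\to\A^1_x$, $(z,t)\mapsto t\gamma(z)$. Since $Z$ is finite over $k$, this graph is finite over $\A^1_t$; since $Z$ is closed in $\A^n_k$, the graph is closed in $\A^n\times\A^1_x\times\A^1_t=\A^{n+1}_{\A^1_t}$. Beyond this routine check I do not anticipate any obstacle.
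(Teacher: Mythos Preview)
Your proposal is correct and follows the same approach as the paper: form the homotopy $d=(U\times\A^1_x\times\A^1_t,(\phi,x-t\gamma),f\circ pr_U)$, verify that its support is the graph of $t\gamma$ over $Z\times\A^1_t$ (equivalently, that $k[U][x,t]/(\phi,x-t\gamma)\cong(k[U]/\phi)[t]$ is finite over $k[t]$ and receives $k[\A^n][x,t]$ surjectively), and conclude by restricting to $t=0,1$. The only difference is cosmetic: you spell out that $c\sim\sigma_Y\circ c$ holds already in the colimit $\ZZ F$, whereas the paper leaves this implicit.
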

\begin{proof}
We may assume that $U$ is affine. Consider the data $d=(U\times\A^1\times\A^1,(\phi,x-t\gamma),f\circ pr_U).$ Then $k[U][x,t]/(\phi,x-t\gamma)\cong k[U]/\phi[t]$ is finite over $\Spec k[t]$ and the map $k[\A^n][x,t]\to k[U][x,t]/(\phi,x-t\gamma)$ is surjective, so $\supp(d)$ is a closed subset of $\A^n\times\A^1\times\A^1$ which is finite over $\A^1=\Spec k[t].$ Then $d$ defines an equivalence between $(U,\phi,f)\sim (U\times\A^1,(\phi,x),f\circ pr_U)$ and $(U\times\A^1,(\phi,x-\gamma),f\circ pr_U).$
\end{proof}

\begin{lem}\label{alphachange}
Let $A\in \gSL_n(k)$ be a matrix with trivial determinant over $k$. Suppose that $c=(U,\phi,f)$ represents a correspondence in $Fr_n(X,Y)$. Let $U_A$ be an \'etale neighbourhood of the closed subset $A\cdot Z$ in $\A^n$ given by $U_A=U\to\A^n\stackrel{A}\to\A^n$. Then $\phi$ is a framing of $A\cdot Z$ in $U_A.$
We claim that $c$ is equivalent to  $(U_A,\phi,f)$ in $H_0(\ZZ F(\Delta^{\bullet}_k,Y))$.
\end{lem}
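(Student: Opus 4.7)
Following the strategy of Lemma~\ref{SL}, I would realise the change of \'etale neighbourhood $\alpha \rightsquigarrow A \circ \alpha$ by interpolating along a rational curve in $\gSL_n(k)$ joining the identity matrix to $A$. Since $\gSL_n$ is simply connected, $\gSL_n(k)$ is generated by the root subgroups $U_\beta(k)$, so by a telescoping argument it suffices to treat the case $A \in U_\beta(k)$ for a single root $\beta$. For such an $A$ there is, exactly as in the proof of Lemma~\ref{SL}, a morphism $A(t)\colon \A^1_k \to \gSL_n$ with $A(0) = 1$ and $A(1) = A$.

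Consider then the datum
\[
d = \bigl(U \times \A^1_t,\, \phi,\, f \circ pr_U\bigr),
\]
viewed as a level $n$ correspondence from $\A^1_t$ to $Y$, whose \'etale neighbourhood structure is given by the map $U \times \A^1_t \to \A^n_{\A^1_t}$, $(u, t) \mapsto (A(t) \cdot \alpha(u), t)$. The essential observation is that the morphism $\Psi\colon \A^n_{\A^1_t} \to \A^n_{\A^1_t}$, $(x, t) \mapsto (A(t) \cdot x, t)$, is an $\A^1_t$-automorphism, with explicit inverse $(x, t) \mapsto (A(t)^{-1} \cdot x, t)$, because $\det A(t) = 1$. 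Hence the composite $\Psi \circ (\alpha \times \mathrm{id})$ is \'etale; moreover $\Psi(Z \times \A^1_t)$ is a closed subscheme of $\A^n_{\A^1_t}$ that is finite over $\A^1_t$ (since $Z$ is finite over $\Spec k$); and $\phi$ is a framing of this subscheme in the \'etale neighbourhood $U \times \A^1_t$, since $\phi = 0$ cuts out exactly $Z \times \A^1_t$. Thus $d \in Fr_n(\A^1, Y)$.

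Specialising $d$ along $i_0$ recovers $c = (U, \phi, f)$ (because $A(0) = 1$), while specialising along $i_1$ produces $(U_A, \phi, f)$ (because $A(1) = A$), so the classes coincide in $H_0(\ZZ F(\Delta^\bullet_k, Y))$. For general $A \in \gSL_n(k)$ I would then write $A = A_N \cdots A_1$ with each $A_j$ in some root subgroup and iterate the argument, passing each time from $(U_{A_{j-1} \cdots A_1}, \phi, f)$ to $(U_{A_j \cdots A_1}, \phi, f)$. The main subtlety is the verification that $d$ is a bona fide framed correspondence over $\A^1$; the key input is precisely that $\Psi$ is a global $\A^1_t$-automorphism of $\A^n_{\A^1_t}$, which guarantees that the twisted support remains closed in $\A^n_{\A^1_t}$ and finite over $\A^1_t$.
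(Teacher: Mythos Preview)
Your argument is correct and follows essentially the same route as the paper: reduce to the case where $A$ is connected to the identity by a one-parameter family $A(t)\colon\A^1\to\gSL_n$ (the paper phrases this as ``elementary homotopies'' rather than root subgroups, but these are the same), use $A(t)$ to twist the \'etale neighbourhood over $\A^1$, and specialise at $t=0,1$. Your explicit verification that the twisted map $\Psi$ is an $\A^1_t$-automorphism is exactly the point the paper summarises by saying ``$H$ is an automorphism of $\A^n_{\A^1}$''.
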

\begin{proof}
Any matrix with trivial determinant can be derived from the identity matrix by a sequence of elementary homotopies $\A^1\to \gSL_n,$ so we may assume that there is a homotopy $H\colon\A^1\to \gSL_n$ with $H(0)=Id$ and $H(1)=A$. Then $H$ is an automorphism of $\A^n_{\A^1}$. Let $(\A^1\times U)_H$ denote an \'etale neighborhood of $H\cdot(\A^1\times Z)$ given by $(\A^1\times U)_H=\A^1\times U\to\A^1\times\A^n\stackrel{H}\to\A^1\times\A^n.$ Then the data $d=((\A^1\times U)_H,\phi\circ pr_U,f\circ pr_U)$ define a correspondence in $Fr_n(\A^1,Y)$.
Since $i_0^*(d)=c$ and $i_1^*(d)=c'$ the lemma follows.
\end{proof}

\begin{cor}\label{swap}
Let $L$ be a a finite field extension of $k$ and $\mu_1,\mu_2\in L, \lambda\in L^{\times}$ such that $L=k(\mu_1,\mu_2)$.
Let $f\in Y(L).$
Then the data $c=(\A^2_L,(\lambda(x_1-\mu_1),x_2-\mu_2),f)$ and $c'=(\A^2_L,(x_1+\mu_2,\lambda(x_2-\mu_1)),f)$ define framed correspondences in $Fr_2(\Spec k,Y)$ which are equivalent in $H_0(\ZZ F(\Delta^{\bullet}_k,Y))$
\end{cor}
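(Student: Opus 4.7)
The supports of $c$ and $c'$ in $\A^2_k$ are the closed points corresponding to $(\mu_1,\mu_2)$ and $(-\mu_2,\mu_1)$; they are related by the rotation matrix $A=\begin{pmatrix}0&-1\\1&0\end{pmatrix}\in\gSL_2(k)$, in the sense that $A\cdot(\mu_1,\mu_2)^T=(-\mu_2,\mu_1)^T$. The plan is to combine Lemma~\ref{alphachange} and Lemma~\ref{SL} with this single matrix: the former will move the support of $c$ onto that of $c'$ by twisting the étale neighbourhood, and the latter will rotate the framing into that of $c'$. The pleasant observation is that the very same $A$ does both jobs.

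Concretely, Lemma~\ref{alphachange} applied to $c$ with $A$ yields $c\sim(U_A,\phi,f)$, where $U_A$ is $\A^2_L$ equipped with the étale map $\A^2_L\to\A^2_k$ obtained by post-composing the projection with $A$; the framing $\phi=(\lambda(x_1-\mu_1),x_2-\mu_2)$ is retained on $U_A$, and the support has moved to $\{(-\mu_2,\mu_1)\}$. Since $U_A$ admits a projection to $\Spec L$, Lemma~\ref{SL} applied with the same matrix $A$, now regarded in $\gSL_2(L)$, further gives $(U_A,\phi,f)\sim(U_A,A\phi,f)$ with
\[
A\phi=\bigl(-(x_2-\mu_2),\;\lambda(x_1-\mu_1)\bigr).
\]

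It remains to verify that $(U_A,A\phi,f)$ and $c'$ coincide as explicit framed correspondences. They share the support $\{(-\mu_2,\mu_1)\}$ in $\A^2_k$. The $k$-automorphism $A_L\colon\A^2_L\to\A^2_L$ acting by $A$ on the coordinates satisfies $\mathrm{pr}\circ A_L=A\circ\mathrm{pr}$, so it identifies $U_A$ with the underlying $\A^2_L$ of $c'$ over $\A^2_k$; its graph is a connected component of the fibre product $U_A\times_{\A^2_k}\A^2_L$ containing the support, and hence is an open neighbourhood of it. On this component, parameterised by the coordinates $x$ of $U_A$ with $y=A_L(x)=(-x_2,x_1)$, the pullback of $A\phi$ from $U_A$ is $(-(x_2-\mu_2),\lambda(x_1-\mu_1))$, while the pullback of $c'$'s framing $(y_1+\mu_2,\lambda(y_2-\mu_1))$ is $(-x_2+\mu_2,\lambda(x_1-\mu_1))$: the two expressions agree, and the maps to $Y$ match because both factor through the constant $f\colon\Spec L\to Y$. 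I do not foresee any substantial obstacle; the argument is essentially bookkeeping, with the only mildly delicate point being the simultaneous choice of the single matrix $A$ that makes Lemma~\ref{alphachange} match the supports and Lemma~\ref{SL} match the framings.
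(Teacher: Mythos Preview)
Your proposal is correct and follows essentially the same approach as the paper: both apply Lemma~\ref{alphachange} and Lemma~\ref{SL} with the single rotation matrix $A=\bigl(\begin{smallmatrix}0&-1\\1&0\end{smallmatrix}\bigr)$. The only cosmetic difference is that the paper performs the coordinate change on $(\A^2_L)_A$ immediately after Lemma~\ref{alphachange} (rewriting it as $(\A^2_L,(\lambda(x_2-\mu_1),-x_1-\mu_2),f)$) and then applies Lemma~\ref{SL}, whereas you apply Lemma~\ref{SL} first on $U_A$ and then carry out the identification with $c'$; the content is identical.
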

\begin{proof}
Take $\bigl(\begin{smallmatrix}
0&-1\\ 1&0
\end{smallmatrix} \bigr)\in \gSL_2(k)$. Then by~\ref{alphachange} $(\A^2_L,(\lambda(x_1-\mu_1),x_2-\mu_2),f)\sim ((\A^2_L)_A,(\lambda(x_1-\mu_1),x_2-\mu_2),f)=(\A^2_L,(\lambda(x_2-\mu_1),-x_1-\mu_2),f)\stackrel{\ref{SL}}\sim(\A^2_L,(x_1+\mu_2,\lambda(x_2-\mu_1),f).$
\end{proof}

\begin{dfn}
Let $c=(\alpha,U,Z,\phi_1,\ldots\phi_n)$ be a level $n$ correspondence. Then the closed subscheme $U\times_{\phi}0$ is finite over $k$ and we define the degree of the correspondence $c$ defined by $\deg c$ as the degree of this subscheme over $k$, i.e.
\[\deg c=\dim_k k[U\times_{\phi}0.]\]
\end{dfn}
\begin{rem}\label{degree}
The degree is additive: $\deg(c+c')=\deg(c)+\deg(c')$ and is preserved by the equivalence relation: $\deg(c)=\deg(c')$ if $c\sim c'.$
\end{rem}

\section{Multiplicative structure and transfer map on $H_0(\ZZ F(\Delta^{\bullet}_k,\Gm^{\wedge *}))$.}\label{MultStruct}
\subsection*{Multiplicative structure}
For $X,X_1,Y,Y_1\in\Sm_k$ there is an obvious product structure
\[Fr_n(X,Y)\times Fr_m(X_1,Y_1)\to Fr_{n+m}(X\times_k X_1,Y\times_k Y_1)\]
defined as follows: if $a=(U,\phi,f)$ represents a correspondence in $Fr_n(X,Y)$ and $b=(U',\phi',f')$ in $Fr_n(X_1,Y_1)$ then $a\times b$ is represented by data $a\times b=(U\times_kU',\phi\times\phi',f\times f')$ in $Fr_{n+m}(X\times X_1,Y\times Y_1).$
Applying an isomorphism $Fr_n(X\times\A^1\times X_1,Y\times Y_1)\cong Fr_n(\A^1\times X\times X_1,Y\times Y_1)$ we see that it descends to the product
$H_0(\ZZ F_n(\Delta^{\bullet}_k\times X,Y))\times H_0(\ZZ F_m(\Delta^{\bullet}_k\times X_1,Y_1))\to H_0(\ZZ F_{n+m}(\Delta^{\bullet}_k\times X\times X_1,Y\times Y_1))$.
Moreover, this product is compatible with composition:
\begin{rem}\label{prodcomp}
For $f_i\in H_0(\ZZ F_{n_i}(\Delta^{\bullet}_k\times X_i,Y_i))$ and $g_i\in H_0(\ZZ F_{m_i}(\Delta^{\bullet}_k\times Y_i,Z_i))$ for $i=1,2$
we have that $(g_1\times g_2)\circ(f_1\times f_2)=(g_1\circ f_1) \times(g_2\circ f_2)$ in $H_0(\ZZ F_{n}(\Delta^{\bullet}_k\times X_1\times X_2,Z_1\times Z_2))$ where $n=n_1+n_2+m_1+m_2.$
\end{rem}

\begin{lem}
Let $(U,\phi,f)$ represent a correspondence in $Fr_n(X,Y)$. Then $(U\times\A^1,\phi\times id,f\circ pr_U)$ and $(\A^1\times U,id\times\phi,f\circ pr_U)$ are equivalent in $H_0(\ZZ F(\Delta^{\bullet},Y)(k))$
\end{lem}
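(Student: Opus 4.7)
The two correspondences $\sigma_Y\circ c = (U\times\A^1,(\phi,t),f\circ pr_U)$ and $c\circ\sigma_X = (\A^1\times U,(t,\phi),f\circ pr_U)$ both belong to $Fr_{n+1}(X,Y)$. Under the swap isomorphism $U\times\A^1\cong\A^1\times U$ they share the same underlying scheme and the same map to $Y$, and their supports correspond under the two natural identifications $\A^{n+1}=\A^n\times\A^1=\A^1\times\A^n$. They differ only by a cyclic shift of the $n+1$ coordinate factors, both in the \'etale map to $\A^{n+1}$ and in the framing tuple $(\phi_1,\ldots,\phi_n,t)$ versus $(t,\phi_1,\ldots,\phi_n)$.

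The plan is to realize this cyclic shift by a single matrix $A\in\gSL_{n+1}(k)$: apply Lemma~\ref{alphachange} with $A$ to transform the \'etale map, then apply Lemma~\ref{SL} with the same $A$ to transform the framing accordingly. The combined effect produces a correspondence which, after matching coordinates across the isomorphism $U\times\A^1\cong\A^1\times U$, coincides with $c\circ\sigma_X$.

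The choice of $A$ depends on the parity of $n$. For $n$ even, the naive cyclic-shift permutation matrix $(v_1,\ldots,v_{n+1})\mapsto (v_{n+1},v_1,\ldots,v_n)$ already has determinant $(-1)^n=1$ and is used directly. For $n$ odd, this permutation has determinant $-1$; one instead negates its first row to obtain a matrix $A\in\gSL_{n+1}(k)$ with $\det A=1$ sending $(v_1,\ldots,v_{n+1})$ to $(-v_{n+1},v_1,\ldots,v_n)$. The residual sign on the $\A^1$-coordinate introduced by this correction is absorbed by choosing the identification $U\times\A^1\cong\A^1\times U$ via $(u,t)\leftrightarrow(-t,u)$, which is an automorphism of the underlying scheme.

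The main (modest) obstacle is precisely this parity-dependent sign correction in $\det A$: one must verify that the sign introduced on the $\A^1$-coordinate can be consistently absorbed by the scheme-level reparametrization so that the resulting explicit correspondence matches $c\circ\sigma_X$ on the nose in the fibre product $V\times_{\A^{n+1}}V'$. Once this bookkeeping is carried out, the statement is an immediate consequence of Lemmas~\ref{alphachange} and~\ref{SL}.
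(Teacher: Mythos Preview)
Your proposal is correct and follows essentially the same route as the paper: apply Lemma~\ref{SL} and Lemma~\ref{alphachange} with a signed cyclic permutation matrix in $\gSL_{n+1}(k)$, then absorb the residual sign via the identification $U\times\A^1\cong\A^1\times U$, $(u,t)\leftrightarrow(-t,u)$. Your parity discussion is in fact more careful than the paper's own proof, which simply writes down the matrix $(v,t)\mapsto(-t,v)$ without noting that this lies in $\gSL_{n+1}(k)$ only for $n$ odd; your case split fills that small gap.
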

\begin{proof}
By the lemmas~\ref{SL} and~\ref{alphachange} we have $(U\times\A^1,\phi\times id_{\A^1},f\circ pr_U)=(U\times\A^1,-id_{\A^1}\times\phi,f\circ pr_U)=(U',-id_{\A^1}\times\phi,f\circ pr_U)$ Here the neighbourhood $U'$ is given by \[U'=U\times\A^1\to\A^n_X\times\A^1\stackrel{(v,x,t)\mapsto(-t,v,x)}\to\A^n\times\A^1_X\text{ for }v\in\A^n,t\in\A^1,x\in X.\]
Note that the latter correspondence is equivalent to $(\A^1\times U,id_{\A^1}\times\phi,f\circ pr_U).$
\end{proof}

This lemma shows that multiplication is compatible with the stabilization map $\sigma_Y$, so it descends to
 \[H_0(\ZZ F(\Delta^{\bullet}_k\times X,Y))\times H_0(\ZZ F(\Delta^{\bullet}_k\times X_1,Y_1))\to H_0(\ZZ F(\Delta^{\bullet}_k\times X\times X_1,Y\times Y_1)).\]

This construction for $X=\Spec k$ and $Y=\Gm^{*}$ gives a multiplicative structure on the graded abelian group $H_0(\ZZ F(\Delta^{\bullet}_k,\Gm^{*})$ which descends to the multiplication on the graded abelian group $H_0(\ZZ F(\Delta^{\bullet}_k,\Gm^{\wedge*}).$

\subsection{Transfer map on $H_0(\ZZ F(\Delta^{\bullet}_k,\Gm^{\wedge*}))$}

Let $L$ be a finite extension of $k$. We will construct the map $tr_{L/k}\colon H_0(\ZZ F(\Delta^{\bullet}_L,\Gm^{\wedge*}))\to H_0(\ZZ F(\Delta^{\bullet}_k,\Gm^{\wedge*}))$ as follows:

Suppose that $v_1,\ldots v_n\in L$ is an $k$-basis of $L$ and $v_1\in k$.
\begin{dfn}
Define $c_{L/k}$ as a framed correspondence represented by the data
$c=(\A^n_L,(x_1-v_1,\ldots,x_n-v_n),pr_L)$. Since $v_1,\ldots,v_n$ generate $L$ over $k$, we get that $supp(c)=Z\to\A^n_L\to\A^n_k$ is a closed embedding, so $c$ defines a correspondence in $Fr_n(\Spec k,\Spec L)$.
\end{dfn}
\begin{lem}\label{ccorrect}
The class of element $c_{L/k}$ in $H_0(\ZZ F(\Delta^{\bullet}_k,\Spec L)$ does not depend on the choice of a basis set.
\end{lem}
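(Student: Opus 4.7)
Let $v=(v_1,\ldots,v_n)$ and $w=(w_1,\ldots,w_n)$ be the two $k$-bases of $L$ (each with first element in $k$), and write $c_v, c_w$ for the associated correspondences. The plan is to write $w=Av$ with $A\in GL_n(k)$ and note that, since $(v_j)_{j>1}$ is linearly independent modulo $k$, the condition $w_1=\sum_j A_{1j}v_j\in k$ forces $A_{1j}=0$ for $j>1$. Setting $\mu:=\det A\in k^\times$ and factoring $A=B\cdot C$ with $C=\operatorname{diag}(1,\mu,1,\ldots,1)\in GL_n(k)$ and $B:=AC^{-1}\in\gSL_n(k)$, I put $v':=Cv=(v_1,\mu v_2,v_3,\ldots,v_n)$ and aim to prove $c_v\sim c_{v'}$ and $c_{v'}\sim c_{Bv'}=c_w$ separately.

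The equivalence $c_{v'}\sim c_w$ is a direct $\gSL_n(k)$-manipulation: Lemma~\ref{alphachange} applied with $B\in\gSL_n(k)$ gives $c_{v'}\sim(U_B,x-v',pr_L)$, where $U_B$ denotes $\A^n_L$ with the twisted structural map $B\circ pr$ to $\A^n_k$. The $L$-automorphism $B^{-1}\colon\A^n_L\to\A^n_L$ identifies $U_B$ with the standard \'etale neighborhood of $Bv'=w$ and transforms the framing into $B^{-1}(y-w)$; Lemma~\ref{SL} applied with $B\in\gSL_n(L)$ then cancels the $B^{-1}$ factor, producing $c_w$.

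For the scaling equivalence $c_v\sim c_{v'}$, the idea is a ``dummy coordinate'' trick. Applying Lemma~\ref{addone} with $\gamma=v_2$ gives $c_v\sim\tilde c$, where $\tilde c:=(\A^n_L\times\A^1,(x_1-v_1,\ldots,x_n-v_n,y-v_2),pr_L\circ pr)$ is a level $(n+1)$ correspondence with support the point $(v_1,v_2,\ldots,v_n,v_2)\in\A^{n+1}_L(L)$. Executing the $\gSL_{n+1}(k)$-manipulation of the previous paragraph, now with the matrix $A'=\operatorname{diag}(1,\mu,1,\ldots,1,\mu^{-1})$, shifts the support to $(v_1,\mu v_2,v_3,\ldots,v_n,\mu^{-1}v_2)$---still a closed immersion into $\A^{n+1}_k$, since $v_2=\mu^{-1}\cdot(\mu v_2)$ lies in the $k$-subalgebra generated by the coordinates---and produces the framing $(x_1-v_1,x_2-\mu v_2,x_3-v_3,\ldots,x_n-v_n,y-\mu^{-1}v_2)$. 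Since $\mu^{-1}v_2\in L\subseteq k[\A^n_L]$, Lemma~\ref{addone} applied in reverse eliminates the dummy coordinate, producing $c_{v'}$. The edge case $n=1$ (so $L=k$) reduces to Lemma~\ref{ratroots}, which collapses both $c_v$ and $c_w$ to $(\A^1_k,x,\mathrm{id}_k)$. The main obstacle is this scaling step: the matrix $C$ has determinant $\mu\neq 1$ in general, so scaling is not an $\gSL_n(k)$-operation directly; introducing the redundant framing coordinate $y-v_2$ is what recasts the problem as an $\gSL_{n+1}(k)$-action, falling within the scope of Lemmas~\ref{alphachange} and~\ref{SL}.
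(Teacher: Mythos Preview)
Your proof is correct and rests on the same toolkit as the paper's: the $\gSL_n$-invariance Lemmas~\ref{SL} and~\ref{alphachange}, together with the add/remove-coordinate Lemma~\ref{addone}. The only difference is where the determinant factor is absorbed. The paper notes that since both $v_1$ and $u_1$ lie in $k$, one can choose $A\in\gSL_n(k)$ so that $vA=(\lambda u_1,u_2,\ldots,u_n)$; the residual scalar $\lambda$ then sits in the first slot, and Lemma~\ref{addone} lets one drop $x_1-\lambda u_1$ and re-add $x_1-u_1$ in one stroke, since both constants are in $k$. You instead put the scalar $\mu=\det A$ in the second slot and compensate with an auxiliary $(n+1)$st coordinate and an $\gSL_{n+1}$-move. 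This works, but is one step longer; the paper's choice exploits the hypothesis $v_1,u_1\in k$ more directly. Your observation that $A_{1j}=0$ for $j>1$ is correct but not actually used anywhere in your argument---you may as well delete it.
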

\begin{proof}
Suppose that $u_1,\ldots, u_n$ with $u_1\in k$ is another basis set. Then there is a matrix $A\in \gSL_n(k)$ such that
$(v_1,\ldots,v_n)\cdot A=(\lambda u_1,u_2,\ldots,u_n)$ for some $\lambda\in k^{\times}.$

Denote by $x=(x_1,\ldots,x_n)$ the vector of coordinates on $\A^n_L$ and $v=(v_1,\ldots,v_n)$ the vector in $L^n.$
Then by lemma~\ref{alphachange} we have $c_{L/k}=(\A^n_L,x-v,pr_L)\sim((\A^n_L)_A,x-v,pr_L)=(\A^n_L,A^{-1}\cdot x-v,pr_L)\sim(\A^n_L,x-A\cdot v,pr_L)$ by lemma~\ref{SL}.
Since $u_1\in k$, Lemma~\ref{addone} implies $(\A^n_L,x-A\cdot v,pr_L)=(\A^n_L,(x_1-\lambda u_1,x_2-u_2,\ldots, x_n-u_n),pr_L)\sim (\A^{n-1}_L,(x_2-u_2,\ldots, x_n-u_n),pr_L)\sim(\A^n_L,(x_1-u_1,x_2-u_2,\ldots, x_n-u_n),pr_L).$
\end{proof}

\begin{rem}\label{generator}
Chose a generator $\alpha\in L,$ so $L=k(\alpha)$. Then $c_{L/k}$ is equivalent to the correspondence $(\A^1_L,x-\alpha,pr_L)$ in $H_0(\ZZ F(\Delta^{\bullet}_k,\Spec L)$
\end{rem}
\begin{proof}
Take a basis $1,\alpha,\alpha^2,\ldots,\alpha^{n-1}.$ Thus we may define $c_{L/k}$ as $c_{L/k}=(\A^n_L,x_1-1,x_2-\alpha,\ldots,x_n-\alpha^{n-1},pr_L)$. Consider the data $d=(\A^n_L\times_L\A^1_L,x_1- t,x_2-\alpha,x_3-\alpha^2 t,\ldots,x_n-\alpha^{n-1} t,pr_L)$. Then $\supp(d)=\Spec L[x_1,\ldots,x_n, t]/(x_1- t,x_2-\alpha,x_3-\alpha^2 t,\ldots,x_n-\alpha^{n-1} t,pr_L)=\Spec L[ t]$ is finite over $\A^1_k=\Spec k[ t]$, and since $k(\alpha)=L,$ we have that $\supp(d)\to A^{n+1}_L\to\A^{n+1}_k$ is a closed embedding. Thus $c_{L/k}$ is equivalent to the correspondence $(\A^1_L,x-\alpha,pr_L)$ and does not depend on a choice of generator $\alpha.$
\end{proof}

\begin{dfn}
$tr_{L/k}=(c_{L/k})^{*}\colon H_0(\ZZ F(\Delta^{\bullet}_L,\Gm^{\wedge*})\to H_0(\ZZ F(\Delta^{\bullet}_k,\Gm^{\wedge*})$ is a map induced by the composition with $c_{L/k}\in H_0(\ZZ F(\Delta^{\bullet}_k,\Spec L)).$
\end{dfn}

\begin{lem}\label{transitive}
Suppose $k\subseteq L'\subseteq L$. Then $tr_{L/k}=tr_{L'/k}\circ tr_{L/L'}.$
\end{lem}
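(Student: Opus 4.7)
The transfer $tr_{L/k}$ is by definition right-composition with $c_{L/k}$, so associativity of composition on $H_0$ reduces the identity $tr_{L/k}=tr_{L'/k}\circ tr_{L/L'}$ to the single equality
\[
c_{L/L'}\circ c_{L'/k}=c_{L/k}
\]
in $H_0(\ZZ F(\Delta^{\bullet}_k,\Spec L))$. My plan is to prove this by pushing all three correspondences down to level~$1$ via Remark~\ref{generator} and then exhibiting an explicit $\A^1_k$-homotopy.

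Since $\operatorname{char}k=0$, the primitive element theorem lets me pick $\gamma\in L'$ with $L'=k(\gamma)$, $\beta\in L$ with $L=L'(\beta)$, and $c\in k$ such that $\delta:=\gamma+c\beta$ is a primitive element for $L$ over $k$. By Remark~\ref{generator} each of $c_{L/k}$, $c_{L'/k}$, $c_{L/L'}$ admits a level-$1$ representative built from the corresponding primitive element. Unwinding the composition formula of Section~1 gives
\[
c_{L/L'}\circ c_{L'/k}\sim(\A^2_L,(X_1-\gamma,X_2-\beta),pr_L)
\]
at level~$2$, while Lemma~\ref{addone} applied to $c_{L/k}\sim(\A^1_L,X_1-\delta,pr_L)$ with the regular function $\beta\in L\subset L[X_1]$ gives
\[
c_{L/k}\sim(\A^2_L,(X_1-\delta,X_2-\beta),pr_L).
\]
So it suffices to produce an $\A^1_k$-homotopy between these two level-$2$ framings, which differ only in their first component.

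To exploit the linear relation $\gamma=\delta-c\beta$, I would set
\[
d=(\A^2_L\times\A^1_k,\,(X_1-\delta+ct\beta,\,X_2-\beta),\,pr_L),
\]
with $t$ the parameter on $\A^1_k$. Specialising at $t=0$ and $t=1$ returns the two correspondences above, so once $d$ is shown to define an element of $Fr_2(\A^1_k,\Spec L)$ we get $i_0^*(d)\sim i_1^*(d)$ in $H_0$, which combined with the previous paragraph proves $c_{L/L'}\circ c_{L'/k}=c_{L/k}$.

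The main obstacle, and the reason for choosing the specific primitive element $\delta=\gamma+c\beta$ rather than an arbitrary one, is verifying that $d$ is a valid framed correspondence. The framing locus in $\A^2_L\times\A^1_k$ is the section $(X_1,X_2,t)=(\delta-ct\beta,\beta,t)$, abstractly $\Spec L[t]$. Its scheme-theoretic image in $\A^3_k=\A^2_{\A^1_k}$ has coordinate ring the $k$-subalgebra $R\subset L[t]$ generated by $\delta-ct\beta$, $\beta$ and $t$; since $\delta=(\delta-ct\beta)+ct\beta$ already lies in $R$, the subalgebra contains $k(\delta)=L$ and hence coincides with all of $L[t]$. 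Therefore the image equals $\Spec L[t]$, is finite over $\Spec k[t]=\A^1_k$, and the section maps isomorphically onto it; the required \'etale-neighborhood structure is obtained by restricting $\A^2_L\times\A^1_k$ to the connected component of the base change containing the framing locus. Granting this verification, the plan is complete.
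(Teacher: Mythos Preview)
Your proof is correct and follows the same overall strategy as the paper: reduce to level~$1$ via Remark~\ref{generator}, identify the composition $c_{L/L'}\circ c_{L'/k}$ with $(\A^2_L,(X_1-\gamma,X_2-\beta),pr_L)$, and then show this is equivalent to the level-$1$ representative of $c_{L/k}$.

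The only difference lies in how the last equivalence is established. The paper takes a general linear combination $\lambda\alpha+\mu\beta$ as primitive element and invokes the packaged lemmas~\ref{alphachange} and~\ref{SL}: a matrix $A\in\gSL_2(k)$ sends $(\alpha,\beta)$ to $(\lambda\alpha+\mu\beta,\gamma')$, and Lemma~\ref{addone} then drops the second coordinate. You instead restrict to $\delta=\gamma+c\beta$ (coefficient of $\gamma$ equal to $1$), which lets you interpolate the first framing component linearly and write down the $\A^1$-homotopy $d$ by hand. Your explicit homotopy is essentially the elementary-matrix path that underlies Lemma~\ref{alphachange} in this special case, so the two arguments are morally the same; yours trades the appeal to~\ref{alphachange} for a direct verification that $d\in Fr_2(\A^1_k,\Spec L)$. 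The one place to tighten: the framing locus in $\A^2_L\times\A^1_k$ is only one component of the full preimage of $Z$ (the preimage is $\Spec((L\otimes_kL)[t])$), so when you say ``restrict to the connected component'' you are in fact removing the other components, which are closed in $U$; stating this explicitly would make the \'etale-neighborhood check airtight.
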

\begin{proof}
Take $\alpha,\beta\in L$ such that $L'=k(\alpha)$ and $L=L'(\beta)$. Then there are $\lambda,\mu\in k$ such that $k(\lambda\alpha+\mu\beta)=L.$ Then by Remark~\ref{generator} it is sufficient to check that $(\A^1_L,x-\beta,pr_{L})\circ(\A^1_{L'},x-\alpha,pr_{L'})= (\A^1_L,x-(\lambda\alpha+\mu\beta),pr_L)$ in $H_0(\ZZ F(\Delta^{\bullet}_k,\Spec L)).$ The left hand side is given by $(\A^2_L,(x_1-\alpha,x_2-\beta),pr_L)$. There is a matrix $A\in SL_2(k)$ such that $(\alpha,\beta)\cdot A=(\lambda\alpha+\mu\beta,\gamma)$ for some $\gamma\in L^{\times}$. Then by~\ref{alphachange} we have $(\A^2_L,(x_1-\alpha,x_2-\beta),pr_L)=((\A^2_L)_A,(x_1-\alpha,x_2-\beta),pr_L)\sim(\A^2_L,(x_1-(\lambda\alpha+\mu\beta),x_2-\gamma),pr_L)\stackrel{\ref{addone}}=(\A^1_L,x-(\lambda\alpha+\mu\beta),pr_L).$
\end{proof}

\subsubsection{Projection formula}
Note that the pullback homomorphism (composition with the projection $p:\Spec L\to\Spec k$) $p^*\colon H_0(\ZZ F(\Delta^{\bullet}_k,\Gm^{\wedge *}))\to H_0(\ZZ F(\Delta^{\bullet}_L,\Gm^{\wedge *}))$  is a ring homomorphism. We prove that the transfer map $tr_{L/k}\colon H_0(\ZZ F(\Delta^{\bullet}_L,\Gm^{\wedge *}))\to H_0(\ZZ F(\Delta^{\bullet}_k,\Gm^{\wedge *}))$ is an $ H_0(\ZZ F(\Delta^{\bullet}_k,\Gm^{\wedge *}))$-module homomorphism:
\begin{lem}\label{projection}
$(p^*x\cdot y)\circ c_{L/k}=x\cdot (y\circ c_{L/k})$ and $(y\cdot p^*x)\circ c_{L/k}=(y\circ c_{L/k})\cdot x$ in for any $x\in H_0(\ZZ F(\Delta^{\bullet}_k,\Gm^{\wedge *}))$ and $y\in H_0(\ZZ F(\Delta^{\bullet}_L,\Gm^{\wedge *})).$
\end{lem}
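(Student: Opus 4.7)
The plan is to reduce both identities to the same ``external'' product $x\times y\in H_0(\ZZ F(\Delta^{\bullet}_k\times\Spec L,\Gm^{\wedge(n+m)}))$ composed with $c_{L/k}$, using Remark~\ref{prodcomp} (compatibility of the product with composition) as the only non-trivial input.

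For the right-hand side of the first identity, apply Remark~\ref{prodcomp} with $f_1=\mathrm{id}_{\Spec k}$, $g_1=x$, $f_2=c_{L/k}$, $g_2=y$, giving
\[
(x\times y)\circ(\mathrm{id}_{\Spec k}\times c_{L/k})=(x\circ\mathrm{id}_{\Spec k})\times(y\circ c_{L/k})=x\times(y\circ c_{L/k}).
\]
Since $\Spec k\times_k\Spec k=\Spec k$, the morphism $\mathrm{id}_{\Spec k}\times c_{L/k}$ is just $c_{L/k}$; and since the $k$-ring product is the restriction of $\times$ to $\Spec k\times_k\Spec k=\Spec k$, the right-hand side $x\cdot(y\circ c_{L/k})$ coincides with $(x\times y)\circ c_{L/k}$.

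For the left-hand side, write $p^*x=x\circ p$ and apply Remark~\ref{prodcomp} with $f_1=p$, $g_1=x$, $f_2=\mathrm{id}_{\Spec L}$, $g_2=y$ to get $p^*x\times y=(x\times y)\circ(p\times\mathrm{id}_{\Spec L})$. The $L$-ring product $p^*x\cdot y$ is the $k$-external product pulled back along the $L$-diagonal $\Delta_L\colon\Spec L\to\Spec L\times_k\Spec L$, and a direct check shows $(p\times\mathrm{id}_{\Spec L})\circ\Delta_L=\mathrm{id}_{\Spec L}$, so $p^*x\cdot y$ is identified with $x\times y$ as an element of $H_0(\ZZ F(\Delta^{\bullet}_L,\Gm^{\wedge(n+m)}))$. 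Composing with $c_{L/k}$ then yields $(p^*x\cdot y)\circ c_{L/k}=(x\times y)\circ c_{L/k}$, matching the right-hand side. The second identity $(y\cdot p^*x)\circ c_{L/k}=(y\circ c_{L/k})\cdot x$ is obtained by an entirely symmetric application of Remark~\ref{prodcomp}, swapping the two tensor factors.

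The conceptual content is light: everything rests on Remark~\ref{prodcomp}. The main obstacle is notational rather than mathematical: carefully tracking the identifications $\Spec k\times_k\Spec k\cong\Spec k$, $\Spec k\times_k\Spec L\cong\Spec L$, and the behavior of the $L$-diagonal, so that the $k$-ring product, the $L$-ring product, and the external $k$-product are all kept straight. Once this bookkeeping is done, both sides visibly coincide with $(x\times y)\circ c_{L/k}$.
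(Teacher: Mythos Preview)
Your argument is correct and takes a genuinely different route from the paper. The paper chooses explicit representatives $(U,\phi,f)$ for $x$ and $(V,\gamma,g)$ for $y$, writes out $(p^*x\cdot y)\circ c_{L/k}$ as $(\A^1_L\times_L U_L\times_L V,(x-\alpha,\phi,\gamma),(f,g))$, and then invokes Lemmas~\ref{alphachange} and~\ref{SL} directly to permute the framing to $(\phi,x-\alpha,\gamma)$, after which the identification $U_L\times_L\A^1_L\times_L V=U\times_k(\A^1_L\times_L V)$ exhibits the result as $x\cdot(y\circ c_{L/k})$. You instead work purely at the level of $H_0$, deriving both sides from the single common expression $(x\times y)\circ c_{L/k}$ via Remark~\ref{prodcomp} and the identifications $\Spec k\times_k\Spec k\cong\Spec k$, $\Spec k\times_k\Spec L\cong\Spec L$, $(p\times\mathrm{id}_L)\circ\Delta_L=\mathrm{id}_L$.

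What each buys: your approach is cleaner and makes the formal nature of the projection formula transparent---it is a statement about external products and diagonals, not about any special feature of $c_{L/k}$. The paper's approach is more self-contained: Remark~\ref{prodcomp} is stated without proof, and checking it (the framings on the two sides differ by a block permutation, so one needs exactly Lemmas~\ref{SL} and~\ref{alphachange}) amounts to the same coordinate shuffle the paper performs explicitly here. In effect you have factored the paper's computation through the general compatibility statement~\ref{prodcomp}, which is a perfectly legitimate and arguably more elegant organization.
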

\begin{proof}
Let $x$ be represented by data $(U,\phi,f)\in Fr_*(k,\Gm^{\wedge a})$ and $y$ by $(V,\gamma,g)\in Fr_*(L,\Gm^{\wedge b})$. Then $p^*x=(U_L,\phi,f)$ and $p^*x\cdot y=(U_L\times_L V,(\phi,\gamma),(f,g))$ and
\[
(p^*x\cdot y)\circ c_{L/k}=(\A^1_L\times_L U_L\times_L V,(x-\alpha,\phi,\gamma),(f,g))
\]
where $t$ is the coordinate on $\A^1_L$ and $\alpha$ generates $L$ over $k$ and
By lemma~\ref{alphachange} and ~\ref{SL} we have that
$(\A^1_L\times_L U_L\times_L V,(x-\alpha,\phi,\gamma),(f,g))\sim
(\A^1_L\times_L U_L\times_L V,(\phi,-(x-\alpha),\gamma),(f,g))\sim(U_L\times_L\A^1_L\times_L V,(\phi,x-\alpha,\gamma),(f,g))$
Note that $U_L\times_L\A^1_L\times_L V=U\times_k(\A^1_L\times_L V)$ so we get that $(p^*x\cdot y)\circ c_{L/k}= x\cdot(y\circ c_{L/k})$.
\end{proof}

\section{Moving lemma}\label{MovingSection}

The aim of this section is to prove the moving lemma~\ref{Moving}.

\begin{lem}\label{hsurface}
Let $x\in\PP^N_k$ be a closed point in a projective space. Then there exists an integer $d_x$ such that
for any finite subset of closed points $Y\subset\PP^N_k$, $x\notin Y$, there is a hypersurface $H$ defined by a single homogeneous polynomial $F$ of degree $d_x$ such that $x\in H$ and $H$ is disjoint from $Y$.
\end{lem}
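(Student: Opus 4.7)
The plan is to find, once and for all, a finite-dimensional $k$-vector space $V$ of homogeneous polynomials of a fixed degree $d_x$ vanishing at $x$ whose common zero locus on $\PP^N_k$ is exactly $\{x\}$. Once such $V$ is in hand, the lemma reduces to the elementary linear-algebra fact that a vector space over an infinite field is not a finite union of proper subspaces.

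To produce $V$, I would start from the homogeneous prime ideal $I(x) \subseteq k[x_0,\ldots,x_N]$ of $x$. Choose homogeneous generators $g_1,\ldots,g_r$ of $I(x)$ with $\deg g_i = e_i$, set $d_x := \max_i e_i$, and take $V := I(x)_{d_x}$. Each product $x_j^{d_x - e_i} g_i$ lies in $V$, and I claim the common zero locus of $V$ in $\PP^N_k$ is $\{x\}$. Indeed, for any closed point $p \neq x$ some generator $g_i$ does not vanish at $p$ (since $V(I(x)) = \{x\}$), and some coordinate $x_j$ does not vanish at $p$, so $x_j^{d_x - e_i} g_i$ does not vanish at $p$ either. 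This yields the desired $d_x$ and $V$, depending only on $x$.

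Given a finite subset $Y = \{y_1,\ldots,y_s\} \subset \PP^N_k$ of closed points disjoint from $x$, each subspace $V_j := \{F \in V \mid F(y_j) = 0\}$ is then a proper $k$-subspace of $V$ by the previous step. Since $k$ has characteristic zero it is infinite, so the finite-dimensional $k$-vector space $V$ is not covered by the finite union $V_1 \cup \cdots \cup V_s$. Picking any $F \in V \setminus \bigcup_j V_j$ yields a hypersurface $H = V(F)$ of degree $d_x$ passing through $x$ and disjoint from $Y$. The only non-routine input is the construction of $V$, which is the main conceptual step but is handled directly from homogeneous generators of $I(x)$, with no need to appeal to deeper machinery such as Castelnuovo--Mumford regularity or Serre vanishing.
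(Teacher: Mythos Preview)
Your proof is correct. The paper takes a more explicit route: it passes to an affine chart $\A^N_k$ containing $x$ and $Y$, writes $x=(x_1,\ldots,x_N)$ with $x_i\in k(x)$, lets $p_i\in k[t]$ be the minimal polynomial of $x_i$ over $k$, and then looks inside the $N$-parameter family $F_0=\sum_i\lambda_i p_i(t_i)$ for a polynomial avoiding $Y$, finally homogenizing. Both arguments conclude with the same linear-algebra step (a vector space over an infinite field is not a finite union of proper subspaces), and both use that $k$ is infinite because $\charr k=0$. Your approach via $V=I(x)_{d_x}$ is more intrinsic: $d_x$ and $V$ are fixed once and for all from homogeneous generators of $I(x)$, so the uniformity of $d_x$ in $Y$ is transparent and no affine chart needs to be chosen. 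The paper's version, on the other hand, keeps the family of candidate polynomials small and completely explicit (an $N$-dimensional linear system built from minimal polynomials of the coordinates), at the cost of a preliminary reduction to an affine chart.
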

\begin{proof}
First, we may assume that $x$ and $Y$ are contained in some affine space $\A^N_k\subset\PP^N_k$ with coordinates $t_i.$
Let $x=(x_1,\ldots,x_N)$ for some $x_i\in k(x)$. Let $p_i$ be a minimal polynomial for $x_i$ over $k$. For any $\lambda_i\in k$ consider $F_0=\sum_{i=1}^N\lambda_ip_i(t_i)$. Let $H_0$ denote the set of zeroes of $F$. Then $x\in H_0$ for any choice of $\lambda_i$. Now we check that one can choose $\lambda_i$ such that $H_0$ is disjoint from $Y.$ Note that for any $y\in Y$ the vector $v_y=(p_1(y_1),\ldots,p_N(y_N))$ is non-zero in $\A^N(k(y)).$ Then for $\lambda\in\A^N(k)$ the condition $\lambda\cdot v_y=0$ defines a proper linear subspace in $\A^N(k)$. Since $k$ is infinite, then there is $\lambda\in\A^N(k)$ such that $\lambda\cdot v_y\neq 0$ for all $y\in Y$. With this choice of $\lambda$ we have that $H_0$ is disjoint from $Y$, so its closure $H$ defined by the homogenization $F$ of $F_0$ is the desired hypersurface.
\end{proof}

\begin{lem}\label{Bertini}
Let $\overline{X}$ be a projective curve, $Z$ be its closed point. Assume that $\overline{X}\setminus Z$ is smooth and $X_{\infty}$ be a finite set of closed points of $\overline{X}\setminus Z$. Let $D$ be an effective divisor and suppose that $|D|$ has no base point. Then there is a simple divisor $D'$ in $|D|$ such that $\supp(D')$ is disjoint from $Z$ and $X_{\infty}.$
\end{lem}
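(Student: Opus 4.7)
The plan is to view $|D|$ as a projective space $\PP^N = \PP(H^0(\overline{X},\Os(D)))$ and to translate each of the three desired conditions---$\supp(D')$ disjoint from $Z$, $\supp(D')$ disjoint from $X_\infty$, and $D'$ simple (i.e.\ reduced)---into a dense Zariski open condition on $\PP^N$. Since $\operatorname{char} k=0$, the field $k$ is infinite, so the intersection of these opens will contain a $k$-rational point that yields the desired $D'$.

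For the two avoidance conditions, let $w$ range over the finitely many closed points in $\{Z\}\cup X_\infty$. Base-point-freeness of $|D|$ means that the evaluation map $H^0(\overline{X},\Os(D))\to \Os(D)\otimes k(w)$ is surjective for every such $w$, so the divisors in $|D|$ passing through $w$ cut out a proper linear subspace $H_w\subset\PP^N$. Set $\Omega_1 = \PP^N\setminus\bigcup_w H_w$; this is a nonempty Zariski open subset.

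For simplicity, restrict to the smooth open curve $U=\overline{X}\setminus Z$. The restricted linear system is still base-point free and defines a morphism $U\to\PP^N$. Classical Bertini in characteristic zero then gives a nonempty Zariski open $\Omega_2\subset\PP^N$ consisting of divisors whose pullback to $U$ is smooth; on the smooth curve $U$, smoothness of the pullback is the same as being reduced, i.e.\ simple.

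Finally $\Omega_1\cap\Omega_2$ is a nonempty Zariski open subset of $\PP^N$, so by infiniteness of $k$ it contains a $k$-rational point. The corresponding divisor $D'\in|D|$ has support contained in $U$ (hence disjoint from $Z$), avoids $X_\infty$, and is simple. The only delicate point is that $\overline{X}$ itself is singular at $Z$, which forbids a direct application of Bertini on the whole curve; but since we already require $\supp(D')$ to avoid $Z$, applying Bertini only on the smooth locus $U$ is both sufficient and legitimate, which sidesteps the singularity.
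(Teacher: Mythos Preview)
Your argument is correct and follows the same underlying idea as the paper's proof---namely, generic smoothness in characteristic zero combined with the avoidance of finitely many points using base-point-freeness---but it is packaged differently.

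The paper does not work with the full projective space $\PP^N=\PP(H^0(\overline{X},\Os(D)))$. Instead it chooses two disjoint members of $|D|$ to build a pencil, i.e.\ a finite morphism $f=[s_1:s_2]\colon\overline{X}\to\PP^1$; it then invokes generic smoothness (\cite[III, Corollary~10.7]{Hartshorne}) for the restriction $\overline{X}\setminus Z\to\PP^1$ and picks a rational point of $\PP^1$ lying in the smooth locus and outside the finite sets $f(Z)$ and $f(X_\infty)$. Your version replaces this pencil by the whole linear system, quotes the classical Bertini theorem on the smooth locus $U$, and intersects the resulting open with the finitely many hyperplane complements coming from the points of $\{Z\}\cup X_\infty$. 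Conceptually the two arguments coincide: Bertini for base-point-free linear systems is exactly generic smoothness applied to the morphism $U\to\PP^N$, and the paper simply slices down to a $\PP^1\subset\PP^N$ first. Your formulation is slightly cleaner (one invocation of Bertini plus linear algebra), whereas the paper's pencil argument is more explicit and avoids naming Bertini, at the cost of first producing a second divisor $D'$ disjoint from $D$ to set up the map to $\PP^1$.
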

\begin{proof}
Let $D$ be an effective divisor and suppose that $|D|$ has no base point. Then let $D'$ be a divisor from $|D|$ such that $\supp(D')$ is disjoint with $\supp(D)$. Then there are two sections $s_1,s_2$ of $L(D)=L(D')$ such that $\divv(s_1)=D$ and $\divv(s_2)=D'.$ Then they define a regular function $f=[s_1\colon s_2]\colon \overline{X}\to\PP^1$. The fibers over all points except $f(Z)$ are contained in the smooth curve $\overline{X}\setminus Z$. Since $\charr k=0$, \cite[III, Corollary 10.7]{Hartshorne} implies that $f\colon \overline{X}\setminus Z\to\PP^1$ is smooth over some open subset $U$ of $\PP^1.$ Pick a rational point $x\in U\setminus f(Z)\setminus f(X_{\infty}).$ Then the fiber over $x$ defines a divisor $D'$  with the desired properties.
\end{proof}

\begin{lem}\label{finitefunction}
Suppose $\overline{X}$ is a projective curve over $k$, $Z$ is its closed point and $\overline{X}\setminus Z$ is smooth over $k$.
Let $X_{\infty}$ be closed subset disjoint with $Z$.
Let $\phi\in k(\overline{X})$ be a rational function that is defined in a neighbourhood of $Z$ and $\phi(Z)=0.$
Then there is a rational function $\psi\in k(\overline{X})$ such that
\begin{itemize}
\item [(1)] $\psi/\phi$ is defined and invertible in a neighborhood of $Z$
\item [(2)] $\psi$ has a pole at any $x\in X_{\infty},$
\item [(3)] $\psi$ considered as a regular function $\psi\colon\overline{X}\setminus Z\to\PP^1$ is smooth over $0.$
\end{itemize}
\end{lem}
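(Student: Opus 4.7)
The plan is to construct $\psi$ as a generic element of a suitable linear system on $\overline{X}$ and then invoke the Bertini result of Lemma~\ref{Bertini} to obtain the smoothness condition over $0$. Let $m = \mathrm{ord}_Z(\phi) \geqslant 1$ denote the order of vanishing of $\phi$ at $Z$, and fix an effective divisor $E = \sum_{x\in X_\infty} n_x[x]$ supported on $X_\infty$ with all $n_x$ large. Provided $E$ is chosen sufficiently positive---an assertion one checks by Riemann--Roch, after passing to the normalization of $\overline{X}$ if $\overline{X}$ happens to be singular at $Z$---the linear system $|E - m[Z]|$ is non-empty and base-point free on $\overline{X}$.

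Apply Lemma~\ref{Bertini} to $|E - m[Z]|$ to obtain a simple effective divisor $D' \in |E - m[Z]|$ whose support is disjoint from $Z \cup X_\infty$. By linear equivalence there is a rational function $\psi \in k(\overline{X})^\times$ with
\[
(\psi) = D' + m[Z] - E.
\]
I claim this $\psi$ satisfies (1)--(3). Near $Z$ both $\supp D'$ and $\supp E$ are empty, so the local divisor of $\psi$ at $Z$ is $m[Z]$, matching that of $\phi$; hence $\psi/\phi$ is a unit in $\Os_{\overline{X},Z}$, proving (1). At each $x\in X_\infty$ the coefficient of $[x]$ in $(\psi)$ equals $-n_x < 0$, so $\psi$ has a pole at $x$, proving (2). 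The zero locus of $\psi$ on the smooth part $\overline{X}\setminus Z$ equals the reduced divisor $\supp D'$, so $\mathrm{ord}_p(\psi) = 1$ at each such $p$, which in characteristic zero means $\psi\colon \overline{X}\setminus Z \to \PP^1$ is unramified, hence smooth, at every point over $0$, proving (3).

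The main technical point is establishing base-point freeness of $|E - m[Z]|$ when $Z$ is possibly a singular point of $\overline{X}$. The standard fix is to work on the normalization $\widetilde{X} \to \overline{X}$---which is smooth projective and has the same function field as $\overline{X}$---where Riemann--Roch gives base-point freeness as soon as $\deg E$ exceeds a bound depending on the genus of $\widetilde{X}$ and $m$. Since $\phi$ is regular at every preimage of $Z$ in $\widetilde{X}$ by hypothesis, all local conditions at $Z$ survive when the resulting function on $\widetilde{X}$ is viewed back as an element of $k(\overline{X})$, so the argument goes through.
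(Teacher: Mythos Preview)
Your passage to the normalization does not rescue condition (1). The requirement is that $\psi/\phi$ be a unit in the local ring $\mathcal{O}_{\overline{X},Z}$, and when $Z$ is singular this is strictly stronger than asking $\psi/\phi$ to be a unit at every preimage of $Z$ on $\widetilde{X}$. Concretely, on the nodal cubic $y^2=x^2(x+1)$ with node $Z=(0,0)$ and normalization $t\mapsto(t^2-1,\,t(t^2-1))$, take $\phi=x$; then $\psi=y$ has the same vanishing orders as $\phi$ at both preimages $t=\pm 1$, yet $\psi/\phi=y/x=t$ lies outside $\mathcal{O}_{\overline{X},Z}$ (it takes the two distinct values $\pm 1$ on the branches, so cannot reduce to a single element of the residue field). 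Thus matching orders on $\widetilde{X}$ does not force the Cartier divisors of $\psi$ and $\phi$ to agree near $Z$ on $\overline{X}$, and a generic member of $|\nu^*E-\nu^*D|$ on $\widetilde{X}$ will typically fail (1). In the application (Lemma~\ref{simple_reduction}) the curve $X$ is only known to be smooth away from $Z$, so this singular case cannot be sidestepped.

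The repair is to stay on $\overline{X}$ and replace your $m[Z]$ by the Cartier divisor $D$ cut out locally by $\phi$ near $Z$, exactly as the paper does. Choosing $E$ supported on $X_\infty$ of large degree and invoking Serre's theorem on the projective scheme $\overline{X}$ (or Riemann--Roch for integral curves with the arithmetic genus) makes $|E-D|$ base-point free on $\overline{X}$; then Lemma~\ref{Bertini} applied on $\overline{X}$ yields a simple $D'$ away from $Z\cup X_\infty$, and the resulting $\psi$ with $(\psi)=D'+D-E$ satisfies (1) automatically because its Cartier divisor near $Z$ is $D$ by construction. With that correction your argument goes through and is in fact shorter than the paper's route, which reaches the same endpoint via a longer chain of very ample twists and repeated use of Lemma~\ref{hsurface}.
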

\begin{proof}
Let $D$ be a Cartier divisor defined by a system $((U,\phi),(\overline{X}\setminus Z,1))$ where $U$ is an open neighborhood of $Z$ such that $\phi$ is defined and invertible on $U\setminus Z.$ Choose an embedding $\overline{X}\to\PP^N$ for some $N$ and the corresponding very ample bundle $D_1$. By Serre's theorem~\cite[II, Theorem 5.17]{Hartshorne} there is $n$ such that the line bundle $L(D+nD_1)$ is generated by global sections. By Lemma~\ref{Bertini}, there is a simple divisor $D'\sim nD_1$ away from $Z$ and $X_{\infty}$.

Since $L(D+D')$ is generated by global sections, the linear system $|D+D'|$ is base-point free~\cite[II, Lemma 7.8]{Hartshorne}. Then by Lemma~\ref{Bertini} $D+D'\sim M$ where $M$ is a simple divisor supported away from $Z\cup X_{\infty}\cup\supp(D')$.

Write $M=y_1+\ldots y_k$ where $y_i$ are points on $\overline{X}\setminus Z.$ By Lemma~\ref{hsurface} applied to the embedding $\overline{X}\to\PP^N$ there is a very ample divisor $H_i$ containing $y_i$ with $\supp(H_i)$ disjoint from $\supp(D+D')\cup X_{\infty}$ Then $M'=M-\sum_i=1^k H_i$ is an effective divisor. For each $i$ we can choose $H'_i$ by lemma~\ref{hsurface} such that $\supp(H'_i)$ is disjoint from $\supp(\sum H_i)$. Then $M'\sim M-\sum H'_i$, so the linear system $|M'|$ has no base points. Thus by Lemma~\ref{Bertini} we may assume that $M'$ is simple and disjoint from $Z\cup X_{\infty}$. Note that $M+M'\sim\sum H_i$ is very ample. Now fix the embedding $\overline{X}\to\PP^{N_1}$ so that $D+D'+M'\sim M+M'$ becomes a hyperplane section for this embedding.

For any $x\in X_{\infty}$ choose by lemma~\ref{hsurface} applied to the embedding the $\overline{X}\to\PP^{N_1}$ the divisor $H_x$
such that $H_x$ contains $x$ and $\supp(H_x)$ is disjoint from $\supp(D+D'+M').$ Then $H_x\sim d_xL$ where $L$ is any hyperplane section, so $H_x\sim d_x(M+M').$ Let $d=\sum_{x\in X_{\infty}} d_x$. Then $d(M+M')\sim\sum_{x\in X_{\infty}} H_x$.

Then $\sum_{x\in X_{\infty}} H_x=d(M+M')\sim (D+D'+M')+(d-1)(M+M').$ By Lemma~\ref{Bertini} $(d-1)(M+M')$ is equivalent to a simple divisor $M''$ supported away from $Z\cup X_{\infty}\cup\supp(D')\cup\supp(M')\cup\supp(\sum_{x\in X_{\infty}}H_x))$. Thus $M'''=D'+M'+M''$ is a simple divisor supported away from $Z$ and $X_{\infty}$.

So, finally we get two equivalent divisors $\sum_{x\in X_{\infty}} H_x\sim D+M'''$ on $\overline{X}$. Let $L$ be the corresponding line bundle. Then we have a global section $s_1$ such that $\divv(s_1)=D+M'''$ and $s_2$ such that $\divv(s_2)=\sum_{x\in X_{\infty}} H_x$.
By construction, $\divv(s_1)$ and $\divv(s_2)$ are disjoint, then there is a regular map $\psi=[s_1:s_2]\colon\overline{X}\to\PP^1.$ Since $\divv_0(\psi)=D+M'''$, we have that $\psi$ defines the same Cartier divisor as $\phi$ in a neighborhood of $Z$, so $\psi$ satisfies the condition $(1).$ Since $X_{\infty}\subseteq \divv_{\infty}(\psi)$, then $\psi$ satisfies the condition $(2).$ Moreover, since $M'''$ is a simple divisor on $\overline{X}\setminus Z$, $\psi$ satisfies the condition $(3).$
\end{proof}

\begin{lem}\label{psideformation}
Consider a framed correspondence $c=(U,\phi=(\phi_1,\ldots\phi_n),f)\in Fr_n(k,Y)$ such that $Z=\supp(c)$ is a single point and $\alpha\colon U\to\A^n$ is an \'etale neighborhood of $Z$.
Let $X$ denote the curve defined in $U$ by the equations $\phi_2=0,\ldots\phi_n=0$. Suppose there is a finite map $\psi\colon X\to\A^1$ with $\psi^{-1}(0)=Z_0=Z\coprod Z'$ and $\psi^{-1}(0)=Z_1.$
Then
\begin{itemize}
\item[(1)] There is an integer $m$ and a morphism $\rho\colon U\to\A^m$ such that $\rho(Z)=0$ and the map $U\stackrel{\alpha\times\rho}\to\A^n\times\A^m$ is injective.
\item[(2)] The correspondence $c=(U\setminus Z',(\psi,\phi_2,\ldots,\phi_n),f)$ is equivalent to the difference $c'-c''$ where
\[c'=(U\times\A^m,(\psi-1,(\phi_i(u))_{i=2}^n,(t_i-\rho_i(u))^{m}_{i=1}),f\circ pr_U)\]
\[c''=((U\setminus Z)\times\A^m,(\psi,(\phi_i(u))_{i=2}^n,(t_i-\rho_i(u))^{m}_{i=1}),f\circ pr_U).\]
\end{itemize}
\end{lem}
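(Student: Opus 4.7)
The plan is to combine an affine-geometric construction for part~(1) with an explicit $\A^1$-deformation for part~(2). For part~(1), I would first shrink $U$ to an affine open neighborhood of $Z$ and apply Lemma~\ref{retract} to replace $U$ by an \'etale neighborhood admitting a projection $p\colon U\to\Spec k(Z)$, which makes $k[U]$ into a finitely generated $k(Z)$-algebra. Choose a finite set of $k(Z)$-algebra generators and translate each by its value at $Z$ to produce elements $\rho_1,\ldots,\rho_m\in\mathcal{M}_Z\subset k[U]$ that still generate $k[U]$ as a $k(Z)$-algebra. The associated morphism $\rho\colon U\to\A^m_k$ then satisfies $\rho(Z)=0\in\A^m(k)$, and after possibly shrinking $U$ once more the combined map $\alpha\times\rho\colon U\to\A^{n+m}$ becomes injective on a neighborhood of the finite set $Z\sqcup Z'\sqcup Z_1$; this shrinking does not change the equivalence class of $c$.

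For part~(2), I would consider the family
\[
d=\bigl(U\times\A^1_s\times\A^m_t,\ (\psi-s,\ \phi_2,\ldots,\phi_n,\ t_1-\rho_1,\ldots,t_m-\rho_m),\ f\circ pr_U\bigr).
\]
Its framing has zero locus equal to the graph $\{(u,\psi(u),\rho(u)):u\in X\}$ inside $U\times\A^1_s\times\A^m_t$, canonically identified with the curve $X$. Projection to $\A^1_s$ is the finite map $\psi\vert_X$, and the image in $\A^{n+m}\times\A^1$ under $\alpha\times id\times id$ is obtained by composing the graph of $\psi$ over $X$ with the restriction $\alpha\times\rho\vert_X$, a closed immersion by part~(1). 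Hence $d$ represents a class in $Fr_{n+m}(\A^1,Y)$.

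At $s=1$ the fiber $i_1^*(d)$ has framing $(\psi-1,\phi_2,\ldots,\phi_n,t-\rho)$ and support $Z_1$, which is exactly $c'$. At $s=0$ the support is $Z_0=Z\sqcup Z'$; by Definition~\ref{13} the class of $i_0^*(d)$ decomposes as the sum of the restrictions of its data to $U\setminus Z'$ and to $U\setminus Z$. The second summand is literally $c''$, and the first becomes the level-$(n+m)$ stabilization of $c=(U\setminus Z',(\psi,\phi_2,\ldots,\phi_n),f)$ after stripping the framing entries $t_j-\rho_j$ by $m$ applications of Lemma~\ref{addone}. Combining gives $c+c''\sim i_0^*(d)\sim i_1^*(d)=c'$ in $H_0(\ZZ F(\Delta^\bullet_k,Y))$, whence $c\sim c'-c''$. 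The main obstacle I expect is in part~(1), namely arranging $\rho(Z)=0$ simultaneously with the injectivity of $\alpha\times\rho$ on the relevant curve, since once this is in hand part~(2) is a routine deformation argument resting on Lemma~\ref{addone} and the finiteness of $\psi$.
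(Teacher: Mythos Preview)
Your deformation argument for part~(2) is correct and matches the paper's; the only cosmetic difference is that you invoke Lemma~\ref{addone} to strip the extra framing entries $t_j-\rho_j$, whereas the paper uses Lemma~\ref{nilpotent} (observing that each $\rho_j$ lies in the radical of $(\psi,\phi_2,\ldots,\phi_n)$ on $U\setminus Z'$, since $\rho(Z)=0$). Either route works.

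The gap is in part~(1). Your construction produces functions $\rho_1,\ldots,\rho_m\in\mathcal{M}_Z$ that generate $k[U]$ as a $k(Z)$-algebra, hence give a closed immersion $U\hookrightarrow\A^m_{k(Z)}$; but as a map to $\A^m_k$ this need not be injective, and pairing with $\alpha$ does not obviously repair it, since the coordinates $\alpha_i$ are unrelated to the projection $p\colon U\to\Spec k(Z)$ you introduced. Your proposed fix---shrinking $U$ to a neighborhood of the finite set $Z\sqcup Z'\sqcup Z_1$---is incompatible with part~(2): the support of your family $d$ is the \emph{entire} curve $X$, and you need $(\alpha\times\rho)|_X$ injective on all of $X$ to make $\supp(d)\to\A^{n+m}\times\A^1$ a closed embedding. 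Shrinking $U$ also shrinks $X$ and will in general destroy the finiteness of $\psi\colon X\to\A^1$, on which the whole construction rests.

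The paper avoids this with a cleaner construction that needs neither Lemma~\ref{retract} nor any shrinking. Take any closed embedding $\rho'\colon U\hookrightarrow\A^m_k$, choose a regular map $g\colon\A^n\to\A^m$ with $g(\alpha(Z))=\rho'(Z)$ (possible because both are closed points with residue field $k(Z)$), and set $\rho=\rho'-g\circ\alpha$. Then $\rho(Z)=0$, and $\alpha\times\rho$ is injective on \emph{all} of $U$: from $(\alpha(u),\rho(u))$ one recovers $\rho'(u)=\rho(u)+g(\alpha(u))$ and hence $u$. With this global injectivity in hand, your part~(2) goes through verbatim.
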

\begin{proof}
Let $L$ denote the residue field of the point $Z$. We may assume that $U$ is affine. Let $\rho'\colon U\to\A^m$ be some closed embedding. Then $\rho'(Z)$ is a point in $\A^m$ with residue field $L$. Since $\alpha$ gives a closed embedding of $Z$ in $\A^n$ we have that $\alpha(Z)$ is also a point with residue field $L$. Then there is some regular map $g\colon\A^n\to\A^m$ with $g(\alpha(Z))=\rho'(Z)$. Take $\rho=\rho'-g\circ i\colon U\to\A^m.$ Let is check that $\alpha\times\rho\colon\A^n\times\A^m$ is injective on all points: for any $u\in U$ $(\alpha(u),\rho(u))=(\alpha(u'),\rho(u'))$ implies $g(\alpha(u))=g(\alpha(u')$, then $\rho'(u)=\rho'(u')$ so $u=u'.$

Let $t_1,\ldots,t_m$ denote the coordinates on $\A^m$.
Consider a data $d=(U\times\A^m\times\A^1,(\psi(u)-t,(\phi_i(u))_{i=2}^n,(t_i-\rho_i(u))_{i=1}^m),f\circ pr_U)$ where $t$ is the coordinate on the additional copy of $A^1.$ The framing in $d$ defines a curve $X$ embedded into $U\times\A^m\times\A^1$ via the composition
\[X\stackrel{\Gamma_{\psi}}\to X\times\A^1 \to U\times \A^1\stackrel{\Gamma_{\rho}\times id}\to U\times\A^m\times\A^1\]
Since $X$ is finite over $\A^1$ and $\alpha\times\rho$ is injective, we get a closed embedding $X\to\A^n\times A^m\times\A^1$. Thus the data $d$ defines a framed correspondence in $Fr_{n+m}(\A^1,Y)$ from $A^1$. Its fiber over zero is a framed correspondence
\[d_0=(U\times\A^m,(\psi(u),(\phi_i(u))_{i=2}^n,(t_i-\rho_i(u))_{i=1}^m),f\circ pr_U).\]

In $\ZZ F_{n+m}(\Spec k,Y)$ the correspondence $d_0$ is equivalent to the sum $d_Z+d_{Z'}$ where
\[d_Z=((U\setminus Z')\times\A^m,(\psi(u),(\phi_i(u))_{i=2}^n,(t_i-\rho_i(u))_{i=1}^m),f\circ pr_U)\]
\[d_{Z'}=((U\setminus Z)\times\A^m,(\psi(u),(\phi_i(u))_{i=2}^n,(t_i-\rho_i(u))_{i=1}^m),f\circ pr_U)\]
Note that the radical of the ideal $(\psi,\phi_2,\ldots,\phi_n)$ equals to the maximal ideal defining the point $Z,$ so it contains $\rho_i(u)$ for each $i$. Then lemma~\ref{nilpotent} implies that $d_Z$ is equivalent in $H_0(\ZZ F(\Delta^{\bullet}_k,Y))$ to
\[
((U\setminus Z')\times\A^m,(\psi(u),(\phi_i(u))_{i=2}^n,(t_i)_{i=1}^m),f\circ pr_U)\sim (\alpha, (U\setminus Z'),(\psi,(\phi_i)_{i=2}^n),f\circ pr_U).
\]
The lemma follows.
\end{proof}

\begin{dfn}
Consider a correspondence $c=(U,\phi,f)\in Fr_n(k,Y)$. We will call it simple, if the scheme $U\times_{\phi}0$ is smooth over $k.$
\end{dfn}

\begin{lem}\label{simple_reduction}
Consider a level $n$ framed correspondence $c=(U,\phi,f)\in Fr_n(k,Y)$ with $Z$ a single point.
Then $c$ is equivalent in $H_0(\ZZ F(\Delta^{\bullet}_k,Y))$ to a difference of two simple correspondences.
\end{lem}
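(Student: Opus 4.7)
The plan is to apply Lemma~\ref{psideformation} after producing, via Lemma~\ref{finitefunction}, a finite function $\psi$ on the curve $X = V(\phi_2,\ldots,\phi_n) \cap U$ that agrees with $\phi_1$ up to a unit near $Z$ and has smooth fibers over both $0$ and a suitable rational point, then using Lemma~\ref{SL} together with Lemma~\ref{multiplication} to align the first coordinate of the framing with $\psi$.

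First I would invoke Lemma~\ref{retract} to assume there is a projection $U \to \Spec L$ with $L = k(Z)$. Shrinking $U$ to an affine neighborhood of $Z$, I would arrange that $X$ is of pure dimension one and smooth away from $Z$ (both hold generically in characteristic zero). I then take a projective compactification $\overline{X}$ of $X$ and, using normalization and resolution of singularities away from $Z$, may further assume $\overline{X}\setminus Z$ is smooth over $k$; set $X_\infty = \overline{X} \setminus X$. Applying Lemma~\ref{finitefunction} to $(\overline{X}, Z, X_\infty, \phi_1|_X)$ then yields $\psi$ such that $\psi = \phi_1 \cdot u$ near $Z$ for some $u \in \Os_{U,Z}^\times$, such that $\psi|_X : X \to \A^1$ is finite, and such that $\psi$ is smooth over $0$ along $\overline{X}\setminus Z$. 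By generic smoothness in characteristic zero, $\psi$ is smooth over a dense open $V \subset \PP^1_k$; since $k$ is infinite I pick $a \in V(k)$, so that $Z_a := \psi^{-1}(a)$ is also smooth over $k$.

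Next I would realign the framing. Writing $u(Z) = \lambda \in L^\times$, Lemma~\ref{SL} with $A = \mathrm{diag}(\lambda, \lambda^{-1}, 1,\ldots,1) \in \gSL_n(L)$ shows $c \sim (U, (\lambda\phi_1, \lambda^{-1}\phi_2, \phi_3, \ldots, \phi_n), f)$. Since $u/\lambda - 1 \in \Ms_{Z,U}$, Lemma~\ref{multiplication} then gives, on a small open neighborhood $U_Z$ of $Z$, the equivalence $c \sim (U_Z, (\psi, \lambda^{-1}\phi_2, \phi_3, \ldots, \phi_n), f)$. Since $\lambda^{-1}$ is a unit, the curve cut out by $(\lambda^{-1}\phi_2, \phi_3, \ldots, \phi_n)$ still equals $X$. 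Applying the obvious $a$-analog of Lemma~\ref{psideformation} to the realigned framing (the proof there goes through verbatim with the homotopy $(\psi - at, \ldots)$ replacing $(\psi - t, \ldots)$) yields $c \sim c'_a - c''$, where the zero-fiber schemes of $c'_a$ and $c''$ are $Z_a$ and $Z' = \psi^{-1}(0) \setminus Z$ respectively; both are smooth over $k$ by construction, so $c'_a$ and $c''$ are simple.

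The main technical obstacle will be the geometric preparation: arranging $X$ to be of pure dimension one and smooth away from $Z$, and compactifying to a projective curve $\overline{X}$ with $\overline{X} \setminus Z$ smooth over $k$. This will require a careful use of generic smoothness and resolution of singularities in characteristic zero. Once that setup is in place, the rest of the proof is a mechanical assembly of Lemmas~\ref{retract},~\ref{SL},~\ref{multiplication},~\ref{finitefunction}, and~\ref{psideformation}.
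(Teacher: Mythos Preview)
Your overall architecture matches the paper's: use Lemma~\ref{finitefunction} to produce $\psi$, then Lemma~\ref{psideformation} to split into a difference, and align $\phi_1$ with $\psi$ via Lemmas~\ref{SL} and~\ref{multiplication}. The realignment step and the $a$-analog of Lemma~\ref{psideformation} are fine and essentially what the paper does.

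The genuine gap is exactly the step you flag as the ``main technical obstacle'': arranging that $X = V(\phi_2,\ldots,\phi_n)$ is a curve with $X\setminus Z$ smooth. Neither of your proposed tools does this. Shrinking $U$ cannot repair non-reducedness: take $n=2$, $\phi_1 = x_1$, $\phi_2 = x_2^2$; then $X = V(x_2^2)$ is everywhere non-reduced and no neighborhood of $Z$ helps. Resolution or normalization of $X$ produces a curve that is no longer a closed subscheme of $U$ cut out by the last $n-1$ framing functions, so Lemma~\ref{psideformation} (which explicitly requires $X$ to be defined in $U$ by $\phi_2=\cdots=\phi_n=0$) no longer applies. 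Lemma~\ref{finitefunction} likewise needs $\overline{X}\setminus Z$ smooth as a scheme, not just after normalization.

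The paper's missing idea is to \emph{change the framing} rather than the curve. One considers the composite $\theta\colon U\setminus Z \xrightarrow{\phi} \A^n\setminus 0 \to \PP^{n-1}$; by generic smoothness in characteristic~$0$ there is a rational point $a\in\PP^{n-1}$ over which $\theta$ is smooth, so $\phi^{-1}(\ell)\setminus Z$ is a smooth curve for the corresponding line $\ell\subset\A^n$. Choosing $A\in\gSL_n(k)$ taking the first coordinate axis to $\ell$ and applying Lemma~\ref{SL} replaces $\phi$ by $A\cdot\phi$ without changing the class of $c$, and now the new $X=V(\phi_2,\ldots,\phi_n)$ is smooth away from $Z$. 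Once you have this, your remaining steps go through essentially as written.
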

\begin{proof}
Consider a composition $\theta\colon U\setminus Z\stackrel{\phi}\to\A^n\setminus 0\to\PP^{n-1}$. Since $\charr k=0$ $\theta$ is smooth over some open subset $V$ of $\PP^{n-1}$(\cite[III, Corollary 10.7]{Hartshorne})
Take a rational point $a\in V$. It defines a line $l$ in $\A^n$ and $\theta^{-1}(a)=\phi^{-1}(l)\setminus Z$ is smooth. Since $a$ is rational, there is a matrix $A\in\gSL_n(k)$ that moves the coordinate line $(*,0,\ldots,0)$ to $l$, so $(\phi\cdot A)^{-1}(*,0,\ldots,0)=\phi^{-1}(l)$. Then by lemma~\ref{SL} we may assume that
$l$ coincides with the coordinate line $(*,0,\ldots,0)$, so $X\setminus Z$ is smooth where the curve $X$ is defined in $U$ by the equations $\phi_2=0,\ldots\phi_n=0.$

There is a projective curve $\overline{X}$ with an open embedding $X\to\overline{X}$ and $\overline{X}\setminus Z$ smooth. Take $X_{\infty}=\overline{X}\setminus X$. By lemma~\ref{finitefunction} there is a regular map $\psi\colon\overline{X}\to\PP^1$ with the properties $(1)-(3)$. It is finite since $\overline{X}$ is projective and irreducible. Take $\A^1=\PP^1\setminus{\infty}$ and $X_0=\psi^{-1}(\A^1)$. Then $X_0$ is an open subset of $X$ and $Z\subset X_0.$ Take $U_0$ be an open subset of $U$ such that $X_0=X\cap U_0.$ Note that $\psi\colon X_0\to\A^1$ is finite and generically smooth. Then it is smooth over some rational point of $\A^1$. Without loss of generality we may assume that $\psi$ is smooth over $1$.

We have $\psi^{-1}(0)=Z\sqcup Z'$. Then the correspondence $\tilde{c}=(U\setminus Z',(\psi,\phi_2,\ldots\phi_n),f)$ is equivalent to a difference $c'-c''$ given by lemma~\ref{psideformation}. Since $\overline{X}\setminus Z$ is a smooth curve and $\psi\colon\overline{X}\setminus Z\to\PP^1$ is smooth over $1$ and $0$, the correspondences $c'$ and $c'$ are simple.

Recall that $\psi/\phi_1$ is invertible on $U\setminus Z'.$ By Lemma~\ref{retract} we may assume that there is a projection $U\to\Spec k(Z).$ Let $\lambda=\psi/\phi_1(Z)\in k(Z)^{\times}.$  Using the latter projection we can consider $\lambda$ as a constant regular function on $U.$ Moreover, $\psi=\gamma\phi_1=(\lambda+\delta)\phi_1$ where $\delta$ lies in the maximal ideal $M_Z.$ Then by lemma~\ref{multiplication} $\tilde{c}$ is equivalent to $(U\setminus Z',(\lambda\phi_1,\phi_2,\ldots\phi_n),f).$ Note that the choice of $\psi$ depends only on the curve $X,$ hence on the ideal $(\phi_2,\ldots,\phi_n).$ Then by the same reasoning the correspondence $b=(U\setminus Z',(\lambda\phi_1,\lambda^{-1}\phi_2,\ldots\phi_n),f)$ is equivalent to a difference of two simple correspondences. By lemma~\ref{SL}, $b\sim c$, hence the lemma.
\end{proof}

\begin{lem}\label{minuslambda}
Let $c=(U,\phi,f)$ represent a correspondence in $Fr_{n-1}(\Spec k,Y).$ Assume that there is a projection $U\to Z$ and $\lambda\in k(Z).$ Let $t$ denote the coordinate on $\A^1.$
Then $c$ is equivalent to the correspondence $(\A^1\times U,(t-\lambda,\phi),f\circ pr_U)$ in $H_0(\ZZ F(\Delta^{\bullet}_k,Y))$.
\end{lem}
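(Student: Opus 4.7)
The strategy has two steps: first attach an additional $\A^1$ factor to $U$ on the right, with framing coordinate $t-\lambda$, via lemma~\ref{addone}; then swap the order of the factors so that the $\A^1$ ends up on the left, by the same type of $\gSL_n$-permutation argument used to establish the swap lemma in Section~\ref{MultStruct}.

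For the first step, the projection $U\to\Spec k(Z)$ lets us view $\lambda\in k(Z)$ as a regular function on $U$, which we still denote by $\lambda\in k[U]$. Applying lemma~\ref{addone} with $\gamma=\lambda$ gives
\[
c\sim (U\times\A^1,(\phi,t-\lambda),f\circ pr_U)
\]
in $H_0(\ZZ F(\Delta^{\bullet}_k,Y))$, where $t$ is the coordinate on the new $\A^1$.

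For the second step, we move the last framing coordinate to the front. Let $A\in\gSL_n(k)$ be the cyclic permutation matrix of determinant $1$ sending $(v_1,\ldots,v_{n-1},w)\mapsto((-1)^{n-1}w,v_1,\ldots,v_{n-1})$. By lemma~\ref{alphachange}, composing the étale structure map $U\times\A^1\to\A^{n-1}\times\A^1=\A^n$ with $A$ — which realizes the same scheme as the neighborhood $\A^1\times U$ of $\{\lambda\}\times Z$ in $\A^1\times\A^{n-1}$ — yields an equivalent correspondence with framing $((-1)^{n-1}(t-\lambda),\phi_1,\ldots,\phi_{n-1})$. A diagonal matrix in $\gSL_n(k)$ of the form $\mathrm{diag}(-1,-1,1,\ldots,1)$, applied via lemma~\ref{SL}, then absorbs the sign $(-1)^{n-1}$ and produces $(\A^1\times U,(t-\lambda,\phi),f\circ pr_U)$, as required.

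The only potential obstacle is the sign bookkeeping needed to stay inside $\gSL_n$ rather than $GL_n$, and this is handled routinely by combining lemmas~\ref{SL} and~\ref{alphachange} exactly as in the proof of the swap lemma preceding remark~\ref{prodcomp}; no further geometric input is required.
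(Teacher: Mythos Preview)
Your proof is correct, but it is organized differently from the paper's. The paper builds a single explicit homotopy directly on the left: it takes $d=(\A^1\times\A^1\times U,(t-\theta\lambda,\phi),f\circ pr_U)$ with homotopy parameter $\theta$, checks finiteness of the support over $\Spec k[\theta]$, and reads off the equivalence between $(\A^1\times U,(t,\phi),f\circ pr_U)$ (identified with $c$ in $\ZZ F$ via stabilization) and $(\A^1\times U,(t-\lambda,\phi),f\circ pr_U)$. You instead invoke Lemma~\ref{addone} to place the extra $\A^1$ on the right with framing $t-\lambda$, and then move it to the left by the $\gSL_n$ swap argument of Section~\ref{MultStruct}. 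Both routes rely on the same underlying linear homotopy (the proof of Lemma~\ref{addone} is exactly the right-sided version of the paper's $d$), so the difference is one of packaging: the paper's argument is shorter and self-contained, while yours is more modular and makes explicit the swap that the paper leaves implicit in identifying $(\A^1\times U,(t,\phi),f\circ pr_U)$ with $c$.

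One small wording issue: in your second step you say that Lemma~\ref{alphachange} ``yields an equivalent correspondence with framing $((-1)^{n-1}(t-\lambda),\phi_1,\ldots,\phi_{n-1})$,'' but Lemma~\ref{alphachange} changes only the \'etale neighborhood, not the framing. The reordering of the framing comes from a separate application of Lemma~\ref{SL} (or from the identification of $(U\times\A^1)_A$ with $\A^1\times U$). You clearly know this, since you cite both lemmas in the final paragraph, but the sentence as written misattributes the effect.
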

\begin{proof}
Consider an affine line $\A^1$ with coordinate $\theta$ and a data
\[d=(\A^1\times\A^1\times U,(t-\theta\lambda,\phi),f\circ pr_U).\]
We may assume that $U$ is affine. Then $k[U][t,\theta]/(t-\theta\lambda,\phi)=(k[U']/(\phi))[t,\theta]/(t-\theta\lambda)=k[U]/(\phi)[\theta]$, so $\supp(d)=\A^1\times Z$ is finite over $\A^1$, where $Z=\supp(c).$ Then $d$ defines a deformation in $Fr_n(\A^1,Y)$ which establishes the desired equivalence.
\end{proof}

\begin{lem}\label{anyf}
Let $Y$ be an open subset of an affine space $\A^m$.
Let $c=(U,\phi,f)$ represent a correspondence in $Fr_n(\Spec k,Y)$. Denote $Z=\supp(c)$ and let $f'\colon U\to Y$ be a regular function such that two restrictions coincide: $f'|_Z=f|_Z$. Then $c$ is equivalent to the correspondence $c'=(U,\phi,f')$ in $H_0(\ZZ F(\Delta^{\bullet}_k,Y))$.
\end{lem}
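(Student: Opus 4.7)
The plan is to build an explicit deformation $d\in Fr_n(\A^1,Y)$ with $i_0^*(d)=c'$ and $i_1^*(d)=c$, exploiting the fact that $Y$ sits inside an affine space $\A^m$. Since both $f$ and $f'$ may be regarded as morphisms $U\to\A^m$, I form their componentwise linear interpolation $h\colon U\times\A^1\to\A^m$, $h(u,t)=tf(u)+(1-t)f'(u)$, and set
\[
d=(V,\phi\circ pr_U,h|_V)
\]
for a suitable open subscheme $V\subseteq U\times\A^1$.

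The one nontrivial check is that $h$ lands in $Y$ on a neighborhood of the support. On the zero locus $Z\times\A^1$ of the pulled-back framing $\phi\circ pr_U$, the hypothesis $f|_Z=f'|_Z$ forces $h(u,t)=f(u)\in Y$ for every $u\in Z$ and every $t$. Hence $V:=h^{-1}(Y)$ is an open subset of $U\times\A^1$ containing $Z\times\A^1$, and $h|_V$ is a genuine morphism $V\to Y$. The remaining conditions for a framed correspondence are automatic: the composite $V\hookrightarrow U\times\A^1\to\A^n\times\A^1$ is étale in a neighborhood of $Z\times\A^1$ as a base change of $U\to\A^n$, the framing $\phi\circ pr_U$ still cuts out precisely $Z\times\A^1$ in $V$, and $Z\times\A^1$ is finite over $\A^1$ because $Z$ is finite over $\Spec k$. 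Evaluating at $t=0$ and $t=1$ then gives $i_0^*(d)=(U,\phi,f')=c'$ and $i_1^*(d)=(U,\phi,f)=c$, yielding the desired equivalence in $H_0(\ZZ F(\Delta^{\bullet}_k,Y))$.

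I do not expect any serious obstacle here. The assumption that $Y$ is open in affine space is exactly what makes the straight-line homotopy $tf+(1-t)f'$ well defined as a morphism to $\A^m$, and the hypothesis $f|_Z=f'|_Z$ is exactly what keeps the homotopy inside $Y$ on a Zariski neighborhood of the support. Shrinking from $U\times\A^1$ to $V=h^{-1}(Y)$ preserves both the étale neighborhood property and the finiteness of the support over $\A^1$, so the construction goes through without issue.
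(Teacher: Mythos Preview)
Your proof is correct and follows essentially the same approach as the paper: both construct the straight-line homotopy $tf+(1-t)f'$ on $U\times\A^1$, restrict to the open preimage of $Y$, and observe that this open set contains $Z\times\A^1$ because $f|_Z=f'|_Z$. Your write-up is in fact more thorough, spelling out the \'etale neighborhood and finiteness checks that the paper leaves implicit.
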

\begin{proof}
Let $\A^1=\Spec k[t]$. Consider the function $F=tf+(1-t)f'\colon U\times\A^1\to \A^m$. Denote $W=F^{-1}(Y).$ Then $W$ contains $Z\times\A^1$. Consider the data $d=(W,\phi,tf+(1-t)f')$ Then $\supp(d)=Z\times\A^1$ and $d$ defines a correspondence in $Fr_n(\A^1,Y)$ which deforms $c$ to $c'$.
\end{proof}

\begin{lem}\label{minusone}
Let $Y$ be an open subset of an affine space $\A^m$. Suppose $Z$ is a closed point in $\A^n$ and $\alpha=(\alpha_1,\ldots,\alpha_n)\colon U\to\A^n$ is its \'etale neighborhood. Assume that there is a projection $U\to Z.$ Let $\lambda=\alpha_1(Z)\in k(Z).$ Suppose we have a simple correspondence $c=(U,(\alpha_1-\lambda,\phi),f)\in Fr_n(k,Y)$ for some $\phi=(\phi_2,\ldots,\phi_n)$ with $\supp(c)=Z.$

Then $c$ is equivalent to some simple correspondence $c'\in Fr_{n-1}(k,Y).$
\end{lem}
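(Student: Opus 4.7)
The plan is to remove the framing function $\alpha_1 - \lambda$ by restricting $c$ to the hypersurface $U_0 := V(\alpha_1 - \lambda) \subset U$ and then reattaching an $\A^1$ via Lemma~\ref{minuslambda}.

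By Lemma~\ref{retract} we assume $U$ admits a projection $U \to Z = \Spec L$; simplicity of $c$ forces $L/k$ to be finite separable. Applying Lemma~\ref{alphachange} with a matrix $A = \bigl(\begin{smallmatrix} 1 & 0 \\ v & I_{n-1} \end{smallmatrix}\bigr) \in \gSL_n(k)$ (so that $A$ fixes $\alpha_1$ and changes $\alpha_i$ to $\alpha_i + v_{i-1}\alpha_1$ for $i \geq 2$), we reduce to the case $L = k(\alpha_2(Z), \ldots, \alpha_n(Z))$. Indeed, $L$ has only finitely many intermediate subfields over $k$, and for each proper intermediate subfield $F \subsetneq L$ the condition ``$\alpha_i(Z) + v_{i-1}\lambda \in F$ for all $i \geq 2$'' cuts out a proper subset of $k^{n-1}$ (otherwise $L = k(\lambda, \alpha_i(Z)) \subseteq F$), and $k$ is infinite; so a generic $v \in k^{n-1}$ works. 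The framing $(\alpha_1 - \lambda, \phi_2, \ldots, \phi_n)$ is unchanged.

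Now set $U_0 := V(\alpha_1 - \lambda) \subset U$. Separability of $L/k$ gives $d\lambda = 0$ in $\Omega^1_{U/k}$, hence $d(\alpha_1 - \lambda) = d\alpha_1$, and $d\alpha_2|_{U_0}, \ldots, d\alpha_n|_{U_0}$ form a local basis of $\Omega^1_{U_0/k}$; thus $U_0 \to \A^{n-1}_k$ via $(\alpha_2, \ldots, \alpha_n)$ is étale in a neighborhood of $Z$, and by the previous paragraph it is an étale neighborhood of the image of $Z$. Define
\[
c' := (U_0,\ (\phi_2|_{U_0}, \ldots, \phi_n|_{U_0}),\ f|_{U_0}) \in Fr_{n-1}(\Spec k, Y).
\]
Its support is $V(\alpha_1 - \lambda, \phi_2, \ldots, \phi_n) = Z$, and it is simple because $Z$ is smooth.

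To conclude $c \sim c'$, apply Lemma~\ref{minuslambda} to $c'$ to get $c' \sim \tilde c := (\A^1 \times U_0, (t - \lambda, \phi_2|_{U_0}, \ldots, \phi_n|_{U_0}), f|_{U_0} \circ pr_{U_0})$. Both $c$ and $\tilde c$ have support $\alpha(Z) \subset \A^n_k$. Form the fibered product $W := U \times_{\A^n_k}(\A^1 \times U_0)$; both projections $W \to U$ and $W \to \A^1 \times U_0$ are étale, so $c$ and $\tilde c$ become equivalent (as explicit correspondences) to correspondences on $W$ sharing first framing $t - \lambda$ but differing by $\phi_i \circ pr_U - \phi_i|_{U_0} \circ pr_{U_0}$ (for $i \geq 2$) and by $f \circ pr_U - f|_{U_0} \circ pr_{U_0}$. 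On $\{t = \lambda\} \cap W$ near $Z$ étaleness of $U \to \A^n_k$ forces the two projections to agree, so each such difference lies in the ideal $(t - \lambda)$ locally. The linear homotopy on $W \times \A^1_s$ with framing $(t - \lambda,\ (1-s)\phi_i \circ pr_U + s \phi_i|_{U_0} \circ pr_{U_0})_{i=2}^n$ and map $(1-s) f \circ pr_U + s f|_{U_0} \circ pr_{U_0}$ then defines an element of $Fr_n(\A^1, Y)$ after shrinking $W$ around $Z$, providing the desired equivalence. The main technical obstacle is precisely this last step: verifying that the linear homotopy has support exactly $Z \times \A^1_s$ (no spurious zeros appearing as $s$ varies) and that the interpolated $f$-map stays in the open set $Y \subset \A^m$, both of which reduce to a careful local analysis of $W$ in a neighborhood of $Z$ using the étale property.
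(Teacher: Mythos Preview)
Your approach is essentially the same as the paper's. Both arguments cut out the hypersurface $U_0=V(\alpha_1-\lambda)\subset U$, define the candidate level $n-1$ correspondence on $U_0$, reattach an $\A^1$ via Lemma~\ref{minuslambda}, and compare with $c$ on the fiber product $W=U\times_{\A^n}(\A^1\times U_0)$, where the two framings differ by elements of the ideal $(t-\lambda)$.

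Two minor comments on packaging. First, your preliminary reduction via Lemma~\ref{alphachange} to arrange $L=k(\alpha_2(Z),\ldots,\alpha_n(Z))$ is a genuine point: without it the support $Z$ need not embed as a closed subset of $\A^{n-1}$ under $(\alpha_2,\ldots,\alpha_n)$, so the data $(U_0,\phi|_{U_0},f|_{U_0})$ would not literally define an element of $Fr_{n-1}(k,Y)$. The paper's write-up glosses over this, so your extra step is actually a clarification rather than a detour. Second, your final linear homotopy, which simultaneously deforms the framings $\phi_i$ and the map $f$, is exactly what Lemmas~\ref{nilpotent} and~\ref{anyf} do separately; citing those two lemmas would replace your closing ``careful local analysis'' with already-verified statements. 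In particular, the worry about the interpolated $f$ leaving $Y$ is handled verbatim by the construction in Lemma~\ref{anyf} (pass to $F^{-1}(Y)$ rather than shrink $W$), and the worry about spurious zeros disappears once you observe that modulo $(t-\lambda)$ the deformed framings equal $\phi_i\circ pr_U$, so the support is $Z\times\A^1_s$ on the nose.
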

\begin{proof}
The element $\lambda$ defines the closed embedding $\Spec k(\lambda)\to\A^1.$ Define a closed subset $U'\subseteq U$ as the pullback of the diagram
\[\Spec k(\lambda)\times\A^{n-1}\to\A^1\times\A^{n-1}\stackrel{\alpha}\leftarrow U.\]
Denote by $\alpha'$ the composition $\alpha'\colon U'\to\Spec k(\lambda)\times\A^{n-1}\to\A^{n-1}.$ Then $id\times\alpha'\colon\A^1\times U'\to\A^n$ is an \'etale neighborhood of $Z$ in $\A^n.$ Take $U''=U\times_{\A^n} (\A^1\times U')$. Then the correspondence $c$ is represented by explicit correspondence $(U'',((\alpha_1-\lambda)\circ pr_U,\phi\circ pr_U),f\circ pr_U).$ Note that $U'$ is a closed subset of $U''$ and there is a retraction $p\colon U''\to U'$

The regular function $\phi_i\circ pr_U-\phi_i\circ p$ vanishes on $U'$, hence $\phi_i\circ p=\phi_i\circ pr_U+\delta_i$ for some $\delta_i\in(\alpha_1-\lambda)\circ pr_U$. Then by lemma~ \ref{nilpotent} we have that $c$ is equivalent to the correspondence $\tilde{c}=(U'',((\alpha_1-\lambda)\circ pr_U,\phi\circ p),f).$ By Lemma~\ref{anyf} we have that $\tilde{c}$ is equivalent to $(U'',((\alpha_1-\lambda)\circ pr_U,\phi\circ p),f(Z))$ where $f(Z)$ is a constant function on $U''$.

By construction, there is an \'etale map $(\alpha_1\circ pr_U,p)\colon U''\to\A^1\times U'$
It gives an equivalence between $\tilde{c}$ and a correspondence $\A^1\times U',(t-\lambda,\phi|_{U'}),f(Z))$ which is equivalent to the level $n-1$ correspondence $c'=(U'\phi|_{U'},f(Z))$ by~\ref{minuslambda}. Note that the scheme $U'\times_{\phi}0$ equals to $U\times_{(\alpha_1-\lambda,\phi)}0$ is smooth, so $c'$ is simple.
\end{proof}

\begin{lem}\label{one}
Let $Y$ be an open subset of an affine space $\A^m$. Let $c=(U,\phi,f)\in Fr_n(k,Y)$ be a simple correspondence. Then $c$ is equivalent to a sum of level $1$ simple correspondences.
\end{lem}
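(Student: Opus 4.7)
My plan is to induct on the level $n$. The base case $n=1$ is immediate (a single summand), and in the inductive step I will show that a simple correspondence $c=(U,\phi,f)\in Fr_n(k,Y)$ is equivalent to a simple correspondence of level $n-1$; applying the induction hypothesis will then yield a sum of simple level $1$ correspondences.

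First I would reduce to the case where $\supp(c)=Z$ is a single closed point. By simplicity, $Z=U\times_\phi 0$ is \'etale over $k$, so it is a disjoint union of closed points; the additivity in Definition~\ref{13} splits $c$ as a sum indexed by the components, each of which remains simple after shrinking $U$ to an affine neighborhood of the given component. By Lemma~\ref{retract}, I may further assume there is a projection $U\to\Spec L$ with $L=k(Z)$, so the values $\mu_i=\alpha_i(Z)\in L$ may be regarded as constant functions on $U$.

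The key step is to bring $\phi_1$ into the form $\alpha_1-\mu_1$ required by Lemma~\ref{minusone}. The Jacobian $J=(\partial\phi_i/\partial\alpha_j)|_Z$ lies in $GL_n(L)$ by simplicity. Combining Lemma~\ref{SL} (left action by $\gSL_n(L)$) with Lemma~\ref{alphachange} (right action by $\gSL_n(k)$), I can normalize $J$ to $\operatorname{diag}(d,1,\ldots,1)$ where $d=\det J\in L^\times$; in these coordinates the curve $X=V(\phi_2,\ldots,\phi_n)$ is smooth at $Z$ with tangent direction $\partial/\partial\alpha_1$, and both $\phi_1|_X$ and $(\alpha_1-\mu_1)|_X$ are uniformizers of $\mathcal{O}_{X,Z}$. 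Their ratio is a unit $v$ with $v(Z)=d$; after shrinking $U$ to an affine neighborhood on which $v$ extends to a global unit on $X$ and lifting to $V\in k[U]$, the difference $\phi_1-V(\alpha_1-\mu_1)$ lies in $(\phi_2,\ldots,\phi_n)$ (using reducedness of $X$ near $Z$), so Lemma~\ref{nilpotent} replaces $\phi_1$ by $V(\alpha_1-\mu_1)$. Writing $V=d(1+m/d)$ with $m=V-d\in M_Z$, Lemma~\ref{multiplication} strips off the factor $1+m/d$, leaving $\phi_1=d(\alpha_1-\mu_1)$; a final application of Lemma~\ref{SL} with the matrix $\operatorname{diag}(d^{-1},d,1,\ldots,1)\in\gSL_n(L)$ transfers $d$ to the second slot, yielding $\phi_1=\alpha_1-\mu_1$. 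Lemma~\ref{minusone} then produces a simple correspondence of level $n-1$ equivalent to $c$, and the induction hypothesis finishes the proof.

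The hard part will be the normalization: directly rescaling $\phi_1$ by the unit $d\in L^\times$ is not an allowed move, since Lemma~\ref{SL} requires determinant one, Lemma~\ref{multiplication} only admits multipliers of the form $1+m$ with $m\in M_Z$, and Lemma~\ref{nilpotent} only modifies $\phi_1$ modulo $(\phi_2,\ldots,\phi_n)$. Each tool covers a distinct portion of the adjustment, and the difficulty lies in sequencing them correctly so that the leftover constant $d$ can ultimately be absorbed into $\phi_2$ by a single diagonal matrix in $\gSL_n(L)$ without disturbing the form of $\phi_1$.
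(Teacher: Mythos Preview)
Your argument is correct and follows the same overall architecture as the paper: induct on $n$, reduce to a single point, use the retraction to $L=k(Z)$, normalize $\phi_1$ to $\alpha_1-\mu_1$, and invoke Lemma~\ref{minusone}. The only real difference is in the normalization step. You work with the Jacobian $J\in GL_n(L)$, reduce it to $\operatorname{diag}(d,1,\ldots,1)$, then carry the leftover unit $d$ through the argument and finally absorb it into $\phi_2$ via $\operatorname{diag}(d^{-1},d,1,\ldots,1)\in\gSL_n(L)$. The paper sidesteps all of this determinant bookkeeping with a single observation: since the $\overline{\phi_i}$ form a basis of $M_Z/M_Z^2$, the nonzero vector $\overline{\alpha_1-\lambda}$ is some linear combination $\sum c_i\overline{\phi_i}$, and any nonzero row vector $(c_1,\ldots,c_n)$ can be completed to a matrix in $\gSL_n(L)$. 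Applying that matrix (Lemma~\ref{SL}) gives $\overline{\phi_1}=\overline{\alpha_1-\lambda}$ directly, with no leftover constant, so $\phi_1=(\alpha_1-\lambda)(1+m)+\delta$ with $m\in M_Z$ and $\delta\in(\phi_2,\ldots,\phi_n)$; Lemmas~\ref{multiplication} and~\ref{nilpotent} then finish immediately. Your route works, but the ``hard part'' you flag is an artifact of trying to normalize the entire Jacobian rather than just its first row; the paper's trick makes that difficulty disappear.
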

\begin{proof}
Let $n>1.$
We may assume that $Z$ is a single point. Since $U\times_{\phi}0$ is smooth over $k$, $\phi$ is a regular system, so the residues $\overline{\phi_1},\ldots,\overline{\phi_n}$ form a basis of the vector space of $\Ms_Z/\Ms_Z^2$ over $k(Z)=\Os_{U,Z}/\Ms_Z.$ Then there is a matrix $A\in\gSL_n(k(Z))$ such that $A\cdot(\overline{\phi_1},\ldots,\overline{\phi_n})=(\overline{\alpha_1-\lambda},\phi'_2,\ldots,\phi'_n$ where $\lambda=\alpha_1(Z)\in k(Z).$ Applying of $A$ does not change the equivalence class of $c$, by~\ref{SL} hence we may assume that $\overline{\phi_1}=\overline{\alpha_1-\lambda}$, so $\phi_1=(\alpha_1-\lambda)(1+m)+\delta$ with $m\in \Ms_Z$ and $\delta\in(\phi_2,\ldots\phi_n).$ Then by~\ref{multiplication} and~\ref{nilpotent} we may assume that $\phi_1=\alpha_1-\lambda.$ Therefore by lemma~\ref{minusone} $c\sim c'$ for a simple correspondence $c'$ of level $n-1$. The lemma follows by induction.
\end{proof}
This allows us to state the main result of the section:

\begin{lem}\label{Moving}
Let $Y$ be an open subset of an affine space $\A^m$. Then any correspondence $c\in Fr_n(k,Y)$ is equivalent to a difference of simple correspondences in $Fr_1(k,Y)$ in  $H_0(\ZZ F(\Delta^{\bullet}_k,Y)).$
\end{lem}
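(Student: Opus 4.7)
The plan is to combine the two main reduction results of this section, namely Lemma~\ref{simple_reduction} and Lemma~\ref{one}, after first splitting $c$ into pieces with single-point support.

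First I would reduce to the case where $\supp(c)$ is a single closed point. By Definition~\ref{13}, the group $\ZZ F_n(\Spec k, Y)$ is freely generated by classes of correspondences with connected support, and any $c \in Fr_n(\Spec k, Y)$ satisfies $\supp(c) = Z_1 \sqcup \cdots \sqcup Z_r$ where each $Z_i$ is a connected component. Since $\supp(c)$ is finite over $\Spec k$, each $Z_i$ is a single closed point $\Spec L_i$ for some finite extension $L_i/k$. The defining relation of $\ZZ F_n(\Spec k, Y)$ then expresses the class of $c$ as a sum $\sum_{i=1}^{r} c_i$, where each $c_i$ is represented by the restriction of the \'etale neighborhood and framing to an open set containing only $Z_i$. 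Hence it suffices to prove the lemma when $Z = \supp(c)$ is a single closed point.

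Next, I would apply Lemma~\ref{simple_reduction} to such a $c$: it provides simple correspondences $c^+, c^- \in Fr_n(\Spec k, Y)$ with $c \sim c^+ - c^-$ in $H_0(\ZZ F(\Delta^{\bullet}_k, Y))$. Now Lemma~\ref{one} applies directly to each of $c^+$ and $c^-$, since they are simple correspondences of level $n$ and $Y$ is (by hypothesis) an open subset of an affine space. It gives equivalences $c^+ \sim \sum_j a_j$ and $c^- \sim \sum_\ell b_\ell$ where each $a_j, b_\ell$ is a simple level $1$ correspondence in $Fr_1(\Spec k, Y)$. Combining, we get
\[ c \sim \sum_j a_j - \sum_\ell b_\ell, \]
which is the desired expression as a difference of simple correspondences in $Fr_1(\Spec k, Y)$.

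No new obstacle is expected: the entire content of the moving lemma has already been encoded in Lemmas~\ref{simple_reduction} and~\ref{one}, and all that remains is this bookkeeping. The only point to double-check is that the splitting in the first step preserves the property that each piece has support a single closed point (so that Lemma~\ref{simple_reduction} genuinely applies), which is immediate from the finiteness of $\supp(c)$ over $\Spec k$.
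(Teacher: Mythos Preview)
Your proof is correct and follows exactly the same approach as the paper: split into single-point supports via the relation defining $\ZZ F_n$, apply Lemma~\ref{simple_reduction} to each piece to obtain a difference of simple correspondences, and then apply Lemma~\ref{one} to reduce each simple correspondence to level~$1$. You have simply made explicit the bookkeeping that the paper leaves implicit.
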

\begin{proof}
Consider a framed correspondence $c$. In $\ZZ F_n(\Spec k,Y)$ it is equivalent to a sum of correspondences of the form $c'=(U,\phi,f)$ such that $\supp(c')$ is a single point.
By lemma~\ref{simple_reduction} every $c'$ is equivalent to a difference $c''-c'''$ of simple correspondences. By lemma~\ref{one} both $c''$ and $c'''$ are equivalent to some simple correspondences in $Fr_1(k,Y)$.
\end{proof}

\section{Level one correspondences}\label{LevelOneSection}
\begin{dfn}
We will call a correspondence $c$ in $\ZZ F(\Spec k,Y)$ standard if it is given by the sum of correspondences of the form
$(\A^1_k,\lambda x,\mu\circ pr_k)$ for some $\lambda\in k^{\times}$ and $\mu\in Y(k).$
\end{dfn}

\begin{lem}\label{red1}
Let $c=(U,\phi,f)\in Fr_1(k,Y)$ be a simple level one correspondence with $Z=\supp(c)$ a single point and a projection $U\to Z$. Then there are $\mu\in k(Z)^{\times}$ and $\lambda\in k(Z)$ such that $k(Z)=k(\lambda)$ and
$c$ is equivalent in $H_0(\ZZ F(\Delta^{\bullet}_k,Y))$ to the correspondence represented by the data $(\A^1_{k(Z)},\mu(t-\lambda),f(Z))$ where $t$ denotes the coordinate on $\A^1_{k(Z)}$ and $f(Z)$ is the constant function on $\A^1_{k(Z)}.$
\end{lem}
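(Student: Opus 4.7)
The plan is to first turn $U$ into an étale neighborhood of the support inside $\A^1_{k(Z)}$, then identify the framing $\phi$ with $\mu(t-\lambda)$ up to a unit perturbation, and finally transport the correspondence from $U$ to $\A^1_{k(Z)}$ itself.

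By Lemma~\ref{retract} I may assume a projection $\pi\colon U\to Z=\Spec k(Z)$ exists; combined with $\alpha\colon U\to\A^1_k$ it yields $\pi\times\alpha\colon U\to\A^1_{k(Z)}$. Since $c$ is simple, $Z$ is smooth over $k$, hence $k(Z)/k$ is separable and the base-change map $\A^1_{k(Z)}\to\A^1_k$ is étale; by étale cancellation $U\to\A^1_{k(Z)}$ is étale as well. Define $\lambda\in k(Z)$ to be the coordinate of the image of the section $Z\hookrightarrow U\to\A^1_{k(Z)}$; since this image coincides with $\alpha(Z)$ as a point of $\A^1_k$, one gets $k(\lambda)=k(\alpha(Z))=k(Z)$.

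Simplicity forces $\phi$ to be a regular parameter of $\Os_{U,Z}$, and so is the pullback of $t-\lambda$ via $U\to\A^1_{k(Z)}$. Hence $\phi=u\cdot(t-\lambda)$ in $\Os_{U,Z}$ for some unit $u\in\Os_{U,Z}^{\times}$. Setting $\mu=u(Z)\in k(Z)^{\times}$, viewed as a constant function on $U$ via $\pi$, and $m=\mu^{-1}u-1\in\Ms_{Z,U}$, we have $\phi=\mu(t-\lambda)(1+m)$ on a sufficiently small open neighborhood of $Z$. Applying Lemma~\ref{multiplication} to the auxiliary correspondence $(U,\mu(t-\lambda),f)$ with parameter $m$ (its support is $Z$ since $\mu$ is a unit) yields $c\sim(U,\mu(t-\lambda),f)$.

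Finally, Lemma~\ref{anyf}, valid after passing to an affine-open neighborhood of $f(Z)$ in $Y$, lets me replace $f$ by the constant map to $f(Z)$. The graph of the étale map $U\to\A^1_{k(Z)}$ is a clopen component of $U\times_{\A^1_k}\A^1_{k(Z)}$ containing $Z$, and on this component both pullbacks of the framing $\mu(t-\lambda)$ coincide, while the constant $f(Z)$ agrees trivially; the defining equivalence of explicit framed correspondences then identifies $(U,\mu(t-\lambda),f(Z))$ with $(\A^1_{k(Z)},\mu(t-\lambda),f(Z))$, which is the claimed form. The main obstacle is the careful setup of the $k(Z)$-structure so that $t-\lambda$ becomes a regular parameter at $Z$ and $k(\lambda)=k(Z)$; once that identification is in place, the conclusion is a straightforward combination of Lemmas~\ref{multiplication} and~\ref{anyf} with the explicit equivalence relation on $Fr_1$.
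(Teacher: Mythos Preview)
Your argument is correct and follows essentially the same route as the paper: define $\lambda=\alpha(Z)$, use simplicity to write $\phi=\mu(\alpha-\lambda)(1+m)$ with $m\in\Ms_Z$, invoke Lemma~\ref{multiplication} to strip the $(1+m)$ factor, apply Lemma~\ref{anyf} to make $f$ constant, and finally use the \'etale map $\beta=\pi\times\alpha\colon U\to\A^1_{k(Z)}$ together with the explicit equivalence relation on $Fr_1$ to replace $U$ by $\A^1_{k(Z)}$. You are merely a bit more explicit than the paper in justifying that $\beta$ is \'etale (via cancellation) and in spelling out the graph argument for the last identification.
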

\begin{proof}
Take $\lambda=\alpha(Z)\in k(Z).$ Since $c$ is simple, we have that $k[U]/\phi=k(Z)$ and the composition $Z\to U\to\A^1$ is a closed embedding, so $k[t]\stackrel{\alpha^*}\to k[U]/\phi=k(Z)$ is a surjection. Hence $k(Z)=k(\lambda).$ Since the residue $\overline{\phi}$ generates $\Ms_Z/\Ms_Z^2$ over $k(Z),$ we have that $\overline{\phi}=\mu\overline{\alpha-\lambda}$ for some $\mu\in k(Z)^{\times}.$ Therefore $\phi=(1+\delta)(\mu(\alpha-\lambda))$ for some $\delta\in\Ms_Z.$ Therefore, by lemma~\ref{multiplication}
we have that $c$ is equivalent to $c'=(\alpha,U,Z,\mu(\alpha-\lambda),f).$ Consider the map $\beta\colon U\to\A^1_{k(Z)}$ that consists of $\alpha$ and the projection $U\to Z.$ Note that $\beta$ is \'etale, $\pi\circ\beta=\alpha$ and $\mu(t-\lambda)\circ\beta=\mu(\alpha-\lambda).$ This
with Lemma~\ref{anyf} gives an equivalence between $c'$ and $(\A^1_k(Z),\mu(t-\lambda),f(Z)).$
\end{proof}

\begin{lem}\label{red2}
Consider a correspondence $c=(\A^1_{k(Z)},\mu(x-\lambda),pr_k)\in Fr_1(k,k)$ with $\lambda\in k(Z)$ such that $k(Z)=k(\lambda)$ and $\mu\in k(Z)^{\times}.$ Let $p\in k[x]$ denote the minimal polynomial of $\lambda.$ Then there is a polynomial $q\in k[x]$ with $\deg(q) <\deg(p)$, and an open neighborhood $U=\A^1_k\setminus Z(q)$ in $\A^1$ such that $c$ is equivalent to the correspondence represented by $(U,q(x)p(x),pr_k)\in Fr_1(k,k)$.
\end{lem}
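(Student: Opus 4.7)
The guiding idea is that the correspondence $c=(\A^1_{k(Z)},\mu(x-\lambda),pr_k)$ lives on the étale neighborhood $\A^1_{k(Z)}\to\A^1_k$ of the closed point $Z(p)\subset\A^1_k$, whereas the target correspondence $(U,q(x)p(x),pr_k)$ lives on the Zariski open $U=\A^1_k\setminus Z(q)\subset\A^1_k$. Both are étale neighborhoods of $Z(p)$; the task is to choose $q\in k[x]$ in such a way that, on a common refinement, the two framings differ only by a multiplicative factor of the form $1+m$ with $m$ in the maximal ideal of $\lambda$, so that Lemma~\ref{multiplication} can be applied.

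First, since $\charr k=0$ the minimal polynomial $p$ is separable and factors in $k(Z)[x]$ as $p(x)=(x-\lambda)\tilde p(x)$ with $\tilde p(\lambda)=p'(\lambda)\in k(Z)^{\times}$. I would then pick $q\in k[x]$ of degree strictly less than $n=\deg p$ satisfying
\[q(\lambda)=\mu/p'(\lambda)\in k(\lambda)=k(Z);\]
such $q$ exists and is unique, because reduction modulo $p$ identifies polynomials of degree $<n$ in $k[x]$ with elements of $k[x]/(p)=k(Z)$. Since $q(\lambda)\neq 0$ and $p$ is irreducible in $k[x]$, the polynomials $q$ and $p$ are coprime; equivalently $Z(q)$ is disjoint from $Z(p)$ in $\A^1_k$, so $U=\A^1_k\setminus Z(q)$ contains $Z(p)$ and the data $(U,q(x)p(x),pr_k)$ genuinely defines a level one correspondence with support $Z(p)$.

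Next, I would restrict $c$ to the affine open $V=\A^1_{k(Z)}\setminus(Z(\tilde p)\cup Z(q))$, which is still an étale neighborhood of the single point $\{x=\lambda\}$ (mapping isomorphically onto $Z(p)\subset\A^1_k$). On $V$ the polynomial $q(x)\tilde p(x)/\mu$ is regular and takes the value $q(\lambda)p'(\lambda)/\mu=1$ at $\lambda$, so
\[q(x)p(x)=q(x)\tilde p(x)\cdot (x-\lambda)=\mu(x-\lambda)\bigl(1+m\bigr),\qquad m=q(x)\tilde p(x)/\mu-1\in\Ms_{\lambda,V}.\]
Lemma~\ref{multiplication} then gives an open neighborhood $V'\subseteq V$ of $\lambda$ on which $c$ is equivalent to $(V',q(x)p(x),pr_k)$.

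Finally, I would note that $(V',q(x)p(x),pr_k)$ and $(U,q(x)p(x),pr_k)$ represent the same framed correspondence: their supports in $\A^1_k$ both equal $Z(p)$, the fiber product $V'\times_{\A^1_k}U$ contains $V'$ (since $V'\subseteq V\subseteq\A^1_{k(Z)}\setminus Z(q)$ maps into $U$), and on $V'$ both framings and both projections to $\Spec k$ agree tautologically, being pulled back from $q(x)p(x)\in k[x]$ and $pr_k$ respectively. The step I expect to require the most care is not any single computation but the bookkeeping around passing between the two étale neighborhoods of $Z(p)$; the coprimality of $p$ and $q$ secured in the second paragraph is precisely what makes this passage unobstructed.
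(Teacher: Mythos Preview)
Your proof is correct and follows essentially the same approach as the paper's: choose $q\in k[x]$ of degree $<\deg p$ with $q(\lambda)=\mu/p'(\lambda)$, then use Lemma~\ref{multiplication} to pass from the framing $\mu(x-\lambda)$ to $q(x)p(x)$, and finally descend from the \'etale neighborhood $\A^1_{k(Z)}$ to the Zariski open $U=\A^1_k\setminus Z(q)$. The only cosmetic difference is that the paper applies Lemma~\ref{multiplication} twice (first replacing $\mu(x-\lambda)$ by $\tfrac{\mu}{p'(\lambda)}p(x)$, then replacing the constant $\tfrac{\mu}{p'(\lambda)}=q(\lambda)$ by $q(x)$), whereas you combine these into a single step by writing $q(x)p(x)=\mu(x-\lambda)\bigl(1+m\bigr)$ directly; your explicit bookkeeping of the passage between \'etale neighborhoods is also a bit more detailed than the paper's one-line conclusion.
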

\begin{proof}
Note that $p$ is separable and its residue generates $\Ms_Z/\Ms_Z^2.$ Then $\mu(x-\lambda)=\frac{\mu}{p'(\lambda)}p(t)(1+m)$ for some $m\in\Ms_Z.$ Then by lemma~\ref{multiplication} we can replace $c$ with $c'=(U',\frac{\mu}{p'(\lambda)}p(t),pr_k)$ for some open neighborhood $U'$ of $Z$ in $\A^1_{k(Z)}.$ Further, $k(\lambda)=k[t]/p(t),$ therefore $\frac{\mu}{p'(\lambda)}=q(\lambda)$ for some polynomial $q\in k[t]$ with $\deg(q)<\deg(p).$ Denote by $\delta=q(\lambda)-q(t)\in\Ms_Z.$ Then we have
\[\frac{\mu}{p'(\lambda)}p(t)=(1+\frac{\delta}{q(t)})q(t)p(t).\]
Thus we have that $c''$ is equivalent to the correspondence $(U,q(x)p(x),pr_k)\in Fr_1(k,k)$ where $U$ is the image of $U'$ under the \'etale map $\A^1_{k(Z)}\to\A^1_k.$
\end{proof}

\begin{lem}\label{kk}
Any correspondence in $Fr(\Spec k,\Spec k)$ is equivalent to a standard correspondence.
\end{lem}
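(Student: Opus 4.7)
My plan is to combine the earlier reductions with a strong induction on the residue-field degree of the support. Since $\Spec k$ is an open subset of $\A^0_k$, Lemma~\ref{Moving} reduces any $c\in Fr(\Spec k,\Spec k)$ to a difference of simple level-one correspondences with single-point support, so it suffices to prove standardness for each such summand. For a simple summand whose support has residue-field degree $d$, Lemma~\ref{red1} followed by Lemma~\ref{red2} rewrites it in the explicit polynomial form $(U,q(x)p(x),pr_k)$, where $p\in k[x]$ is irreducible of degree $d$, $q\in k[x]$ is coprime to $p$ with $\deg q<d$, and $U=\A^1_k\setminus Z(q)$. I would then prove by strong induction on $d$ that every correspondence of this shape is equivalent to a standard one.

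The base case $d=1$ is immediate: $p=x-a$ is linear, $q=b\in k^{\times}$ is a constant, and Lemma~\ref{ratroots} gives $(\A^1,b(x-a),pr_k)\sim(\A^1,bx,pr_k)$. For the inductive step with $d>1$, the disjointness of $Z(p)$ and $Z(q)$ lets me split in $\ZZ F_1$:
\[(U,qp,pr_k)=(\A^1,qp,pr_k)-(V,qp,pr_k),\qquad V=\A^1\setminus Z(p).\]
For the first summand, Lemma~\ref{ratroots} replaces $qp$ by $a\cdot x(x-1)\cdots(x-n+1)$ of the same leading term ($n=\deg qp$), and its second part expresses this as a standard sum. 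For the second summand, the factorization $q=c\prod q_j^{e_j}$ induces a decomposition into pieces $(V_j,qp,pr_k)$ with single-point support $Z(q_j)$ of residue-field degree $\deg q_j<d$. When $e_j=1$, such a piece is simple, so Lemmas~\ref{red1} and~\ref{red2} put it into the polynomial form with minimal polynomial $q_j$ of degree $<d$, and the inductive hypothesis applies.

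The main obstacle is the case $e_j>1$, where $(V_j,qp,pr_k)$ is not simple and Lemma~\ref{red1} does not apply directly. I would handle it by first using Lemma~\ref{retract} to replace $V_j$ by an étale neighborhood that projects to $\Spec k(\lambda_j)$, and then Lemma~\ref{multiplication} to absorb the unit factor $qp/q_j^{e_j}=h(\lambda_j)(1+m)$ (where $m$ lies in the maximal ideal of $Z(q_j)$), arriving at an equivalent correspondence of the form $(\A^1_L,\kappa(x-\lambda_j)^{e_j},pr_k)$ with $L=k(\lambda_j)$ and $\kappa\in L^{\times}$. Since $L/k$ is étale, the same linear-interpolation argument as in Lemma~\ref{ratroots} yields a valid framed correspondence over $\A^1_k$, and I may replace $(x-\lambda_j)^{e_j}$ by the separable polynomial $(x-\lambda_j)(x-\lambda_j-1)\cdots(x-\lambda_j-e_j+1)$ of the same leading term. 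This splits the correspondence into $e_j$ simple pieces, each with single-point support of residue-field degree $[L:k]=\deg q_j<d$; applying Lemmas~\ref{red1} and~\ref{red2} to each and invoking the inductive hypothesis then finishes the argument, closing the induction.
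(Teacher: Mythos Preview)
Your overall strategy matches the paper's: reduce via Lemmas~\ref{Moving}, \ref{red1}, \ref{red2} to the polynomial form $c=(\A^1\setminus Z(q),qp,pr_k)$ with $\deg q<\deg p$, then induct using the split $c+c'=(\A^1,qp,pr_k)$ with $c'=(\A^1\setminus Z(p),qp,pr_k)$ of smaller degree. The paper simply invokes the induction hypothesis on $c'$; you go further and try to justify this by decomposing $c'$ over the irreducible factors $q_j^{e_j}$ of $q$, which is a reasonable thing to make explicit.

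The gap is in your handling of the case $e_j>1$. After passing to an open $U\subset\A^1_L$ with framing $\kappa(x-\lambda_j)^{e_j}$, the linear interpolation you propose does \emph{not} in general define a framed correspondence in $Fr_1(\A^1_k,\Spec k)$: one must check (as in the proofs of Lemmas~\ref{nilpotent} and~\ref{multiplication}) that $k[x,t]$ surjects onto $L[x,t]$ modulo the interpolated framing, i.e.\ that the support embeds as a closed subscheme of $\A^1_k\times\A^1$. For a concrete failure take $[L:k]=2$ and $e_j=2$: the interpolation factors as $(x-\lambda)(x-\lambda-t)$, and over the closed point $t=\sigma(\lambda)-\lambda$ of $\A^1_k$ (with $\sigma$ the nontrivial automorphism of $L/k$) the two branches of the support in $\A^1_L$ acquire the same image in $\A^1_k$, so the ring map is not surjective. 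The point is that Lemma~\ref{ratroots} uses the identity $\A^1_k\to\A^1_k$ as the \'etale neighbourhood, whereas your neighbourhood $\A^1_L\to\A^1_k$ has degree $>1$, and moving the roots inside $\A^1_L$ can make two of them Galois-conjugate over $k$.

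The repair is simpler and stays entirely over $k$: do not pass to $\A^1_L$. On $V_j\subset\A^1_k$ the framing is $h\,q_j^{e_j}$ with $h\in k[x]$ a unit near $Z(q_j)$; choose $\tilde q_j\in k[x]$ with $\deg\tilde q_j<\deg q_j$ and $\tilde q_j\equiv h\pmod{q_j}$, so that $h/\tilde q_j=1+m$ with $m\in M_{Z(q_j)}$, and apply Lemma~\ref{multiplication} to obtain $c'_j\sim(\A^1\setminus Z(\tilde q_j),\,\tilde q_j q_j^{e_j},pr_k)$. This is again of the split shape with complementary piece of degree $\deg\tilde q_j<\deg q_j$, and the induction (most cleanly phrased, as in the paper, on $\deg(c)$ for correspondences $(U,r,pr_k)$ with $U$ open in $\A^1_k$ and $r\in k[x]$, so that non-simple pieces are automatically covered) then closes without ever needing to separate the $e_j$-fold root.
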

\begin{proof}
By Lemma~\ref{Moving},~\ref{red1} and~\ref{red2} any correspondence in $Fr(\Spec k,\Spec k)$ is equivalent to a sum of correspondences of the form $c=(\A^1\setminus Z(q),qp(x),pr_k)$ for some $q,p\in k[x]$ with $\deg(q)<\deg(p)$. We proceed by induction on $\deg(c).$ Take $c'=(\A^1\setminus Z(p),qp(x),pr_k)$.
Note that $c+c'=(\A^1,qp(x),pr_k).$ Since $\deg(c')=\deg(q)<\deg(p)$ by induction hypothesis it is sufficient to prove the statement for the correspondence $(\A^1,qp(x),pr_k).$ There is a polynomial $p_1(x)$ with the same leading term as $qp(x)$ and all roots of $p_1(x)$ are rational and separable. The lemma follows then by~\ref{ratroots}.
\end{proof}

\begin{lem}\label{ratsupp}
Suppose $c=(U,\phi,f)$ represents a framed correspondence in $Fr_n(k,Y)$ and $\supp(c)$ consists of rational points in $\A^n_k$. Then $c$ is equivalent to a standard correspondence.
\end{lem}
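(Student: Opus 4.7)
First, I would decompose $c$ over its rational support: since each rational point of $\supp(c)$ is a connected component of $\supp(c)$ in $\A^n_k$, Definition~\ref{13} expresses $c=\sum_i c_i$ in $\ZZ F_n(\Spec k,Y)$ with each $c_i$ supported at a single $k$-rational point $Z_i\in\A^n(k)$. It suffices to handle one summand, so I assume $\supp(c)=\{Z\}$ with $Z=(a_1,\dots,a_n)\in\A^n(k)$. I then translate $Z$ to the origin by the deformation $(U\times\A^1,\alpha-tZ,\phi,f\circ pr_U)$, whose support in $\A^n\times\A^1$ is the line $\{((1-t)Z,t):t\in\A^1\}$, closed in $\A^n\times\A^1$ and finite over $\A^1_t$; its fibres at $t=0$ and $t=1$ give, respectively, the original $c$ and a correspondence with the same data but with support the origin. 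By Lemma~\ref{retract} I may further assume $U$ admits a projection onto $\Spec k(Z)=\Spec k$.

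Second, I would reduce to level $1$. Assuming $c$ is simple, the proof of Lemma~\ref{one} applies: because $k(Z)=k$, the change-of-basis matrix in $\gSL_n(k(Z))$ and the scalar $\lambda=\alpha_1(Z)\in k(Z)$ both lie in $k$, and the \'etale neighborhood constructed in Lemma~\ref{minusone} is defined over $k$, so the recursion produces a sum of simple level-$1$ correspondences still supported at single rational points. If $c$ is not simple, I would first invoke Lemma~\ref{simple_reduction} to express $c$ as a difference $c'-c''$ of simple correspondences, performing the auxiliary choices there (the rational point $a\in V\subset\PP^{n-1}$ over which the projectivization of $\phi$ is smooth, and the function $\psi$ from Lemma~\ref{finitefunction}) over $k$ and choosing the divisors so that the fibres $\psi^{-1}(0)$ and $\psi^{-1}(1)$ are unions of $k$-rational points; since $k$ is infinite of characteristic zero, the smooth loci and base-point-free linear systems that appear admit $k$-rational points, so this can be arranged.

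Third, Lemma~\ref{red1} then yields, for each resulting simple level-$1$ rational-support correspondence, a representative of the form $(\A^1_k,\mu(x-\lambda),f(Z))$ with $\mu\in k^{\times}$, $\lambda\in k$, and $f(Z)\in Y(k)$. The deformation $(\A^1_k\times\A^1_s,\mu(x-s\lambda),f(Z))$, of support $\{(s\lambda,s)\}\subset\A^1\times\A^1$ --- closed in $\A^1\times\A^1$ and finite over $\A^1_s$ --- provides an equivalence to $(\A^1_k,\mu x,f(Z))$, which is standard by definition. I expect the main obstacle to be Step~2, namely verifying that the auxiliary data in Lemma~\ref{simple_reduction} can be chosen so that all intermediate supports remain $k$-rational. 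A robust fallback is an induction on the colength $\dim_k k[U\times_\phi 0]$, using Lemmas~\ref{nilpotent} and~\ref{multiplication} to simplify the framing at the rational point $Z$ directly and thereby avoid Lemma~\ref{simple_reduction} altogether.
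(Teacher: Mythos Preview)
Your plan works much harder than necessary and contains a real gap. The paper's proof is a two-line factorization argument: once $Z=\supp(c)$ is a single rational point, the value $f(Z)$ lies in $Y(k)$, so by Lemma~\ref{anyf} one may replace $f$ by the constant map $f(Z)\circ pr_k$. This exhibits $c$ as the composition of $(U,\phi,pr_k)\in Fr_n(\Spec k,\Spec k)$ with the level-$0$ correspondence $f(Z)\colon\Spec k\to Y$. Now Lemma~\ref{kk} applies to $(U,\phi,pr_k)$ \emph{with no hypothesis on the support}: it says every element of $H_0(\ZZ F(\Delta^\bullet_k,\Spec k))$ is standard. Composing the resulting standard expression with $f(Z)$ gives a sum of terms $(\A^1_k,\lambda x,f(Z)\circ pr_k)$, which is standard by definition. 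The whole difficulty of preserving rationality along the moving lemma simply evaporates, because Lemma~\ref{kk} already absorbed the non-rational supports that appear in Lemmas~\ref{simple_reduction}--\ref{red2}.

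Your Step~2 is where the gap lies. You propose to rerun Lemma~\ref{simple_reduction} while arranging that the auxiliary supports $\psi^{-1}(0)\setminus Z$ and $\psi^{-1}(1)$ consist of $k$-rational points. But those supports are divisors on the projective curve $\overline{X}$ produced inside the proof, and Lemma~\ref{Bertini} only guarantees a \emph{simple} (reduced) divisor in the linear system, obtained as a fibre of a finite map $\overline{X}\to\PP^1$ over a rational point of $\PP^1$. A general such fibre has no reason to split into $k$-rational points: if $\overline{X}$ has positive genus, or even for $\overline{X}=\PP^1$ when the degree of the map exceeds~$1$, the generic fibre is an irreducible $0$-cycle of degree $>1$. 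Your sentence ``since $k$ is infinite of characteristic zero \ldots\ this can be arranged'' does not follow. The fallback you mention (induction on colength via Lemmas~\ref{nilpotent} and~\ref{multiplication}) is not fleshed out enough to assess. The cleanest fix is to abandon rationality-tracking entirely and use the factorization through $Fr_n(\Spec k,\Spec k)$ as above; the only place rationality is actually used is to get $f(Z)\in Y(k)$.
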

\begin{proof}
We may assume that $Z=\supp(c)$ is a single point. Then $f(Z)\in Y(k).$ Then by Lemma~\ref{anyf} $c$ is equivalent to the composition $c=(U,\phi,pr_k)\circ f(Z)$ where $f(Z)\colon\Spec k\to Y$ is the rational point inclusion and $(U,\phi,f)\in Fr_n(k,k)$. The lemma follows then from~\ref{kk}.
\end{proof}

\section{Map $\Phi\colon H_0(\ZZ F(\Delta^{\bullet}_k,\Gm^{\wedge*}))\to K_*^{MW}(k)$}\label{cortok}
In this section we construct a homomorphism 
$\Phi\colon H_0(\ZZ F(\Delta^{\bullet}_k,\Gm^{\wedge*}))\to K_*^{MW}(k).$
(see subsection~\ref{phiconst}). To do that we need certain preliminaries.
Suppose the data $c=(U,\phi,f)$ represents a correspondence in $Fr_n(k,\Gm^{\times m})$. Let $Z=\supp(c).$
Then the sequence $\phi$ defines a Koszul complex $K(\phi)$ which can be considered as an element of $W^n_Z(U)$

We will use the theory of Chow-Witt groups introduced by Barge-Morel and developed by Fasel~(\cite{Fasel},\cite{FaselThesis}).
Recall(~\cite{Fasel}) that for a smooth scheme $X\in\Sm_k$, integer $n$ and a line bundle $L$ over $X$ there is a complex $C(X,G^n,L)$
defined by
\[
C^m(X,G^n,L)=\coprod_{x\in X^{(m)}}K_{n-m}^M(k(x))\times_{\frac{I^{n-m}}{I^{n-m+1}}}I^{n-m}_{fl}(\Os_{X,x},L)
\]
For a closed subset $Z$ and its complement $U=X\setminus Z$ define a subcomplex $C(X,G^n,L)_Z$ as the kernel of the map $C(X,G^n,L)\to C(U,G^n,L),$ so there is a short exact sequence
\[0\to C(X,G^n,L)_Z\to C(X,G^n,L)\to C(U,G^n,L)\to 0\]

\subsection{Two commutative squares}\label{comsq}
Now let $d=(U,\phi,f)\in Fr_n(\A^1,\Gm^m)$ be a framed correspondences. Let $\alpha$ be the \'etale map $\alpha\colon U\to\A^n_{\A^1}$ and $j\colon \A^n_{\A^1}\to\PP^n_{\A^1}$ be an open embedding. The composition $j\circ\alpha\colon U\to\A^n_{\A^1}\to\PP^n_{\A^1}$ induces a morphism of complexes (\cite[\S 3]{Fasel},\cite[Corollary 10.4.2]{FaselThesis})
\[j^*\circ\alpha^*\colon C(\PP^n_{\A^1},G^{n+m},\Os(-n-1))\to C(U,G^{n+m}).\]
Denote $Z=\supp(d)$. For any point $z\in Z$ the map $\alpha$ induces an isomorphism $W_{fl}(\Os_{\PP^n_{\A^1},z})\to W_{fl}(\Os_{U,z})$. Then we get an isomorphism of complexes
\[j^*\circ\alpha^*\colon C(\PP^n_{\A^1},G^{n+m},\Os(-n-1))_{Z}\stackrel{\cong}\to C(U,G^{n+m})_{Z}.\]
Let $i_0\colon U_0\to U$ denote the inclusion of zero fiber of the composition map $U\to\A^n_{\A^1}\to\A^1$ and $I_0\colon \{0\}\to\A^1$ be the standard inclusion.
Note that the oriented Gysin map defined in~\cite[Definition 5.5]{Fasel} gives rise to the Gysin map with support
$i_0^{!}\colon H^n(C(U_t,G^{n+m})_{Z_t})\to H^n(C(U_0,G^{n+m})_{Z_0})$ where $Z_0$ is the support of the zero fiber correspondence $(U_0,\phi|_{U_0},f|_{U_0})$
Note the following
\begin{rem}\label{diag1}
The following diagram commutes
\[
\xymatrix{
H^n(C(U_t,G^{n+m})_{Z})\ar[r]^{i_0^{!}} & H^n(C(U_0,G^{n+m})_{Z_0})\\
H^n(C(\PP^n_{\A^1},G^{n+m},\Os(-n-1))_{Z})\ar[u]^{j^*\circ\alpha^*}\ar[r]^{I_0^{!}} & H^n(C(\PP^n,G^{n+m},\Os(-n-1))_{Z_0})\ar[u]^{j_0^*\circ\alpha_0^*}
}
\]
\end{rem}
\begin{proof}
The remark follows from the fact that the oriented Gysin map commutes with flat pullback by~\cite[Lemma 5.7]{Fasel}.
\end{proof}
Recall that for a proper morphism $f$ there is a notion of pushforward map $f_*$ (see~\cite[Remark 3.36]{Fasel},\cite[Chapitre 8]{FaselThesis}).

\begin{lem}\label{diag2}
The following diagram commutes:
\[
\xymatrix{
H^n(C(\PP^n_{\A^1},G^{n+m},\Os(-n-1))_{Z_t})\ar[d]^{p_*}\ar[r]^{I_0^{!}} & H^n(C(\PP^n,G^{n+m},\Os(-n-1))_{Z_0})\ar[d]^{p_*}\\
H^0(C(\A^1,G^{m}))\ar[r] & H^0(C(\Spec k,G^{m},))
}
\]
Here $p$ denotes the projection and the bottom arrow is the Gysin map for the embedding $\{0\}\to \A^1.$
\end{lem}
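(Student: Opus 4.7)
The plan is to recognize the square as a base-change square and to invoke the compatibility of the oriented Gysin map with proper pushforward proven in Fasel's thesis. Observe that the fiber squares
\[
\xymatrix{
\PP^n\ar[r]^{\iota_0}\ar[d]^{p} & \PP^n_{\A^1}\ar[d]^{p}\\
\Spec k\ar[r]^{I_0} & \A^1
}
\]
is Cartesian: $\iota_0$ is the inclusion of the zero fiber of the composition $\PP^n_{\A^1}\to \A^1$, and $I_0$ is the inclusion of $0\in\A^1$. Both $\iota_0$ and $I_0$ are regular closed embeddings of codimension $1$, while the vertical projections $p$ are proper. The supports $Z_t$ and $Z_0=Z_t\cap \PP^n$ fit into the picture correctly because $Z_0$ is exactly the preimage $\iota_0^{-1}(Z_t)$.

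The key input is the base-change formula for the oriented Gysin map with respect to proper pushforward (see~\cite[Proposition 10.4.5, Chapitre 10]{FaselThesis}): for a Cartesian square with proper vertical arrows and the horizontal arrows regular embeddings of the same codimension, one has $I_0^{!}\circ p_*=p_*\circ \iota_0^{!}$ on the level of complexes with supports. The top row of our diagram is $\iota_0^{!}$ (this is the definition of $I_0^{!}$ applied to the $\A^1$-family of supports, since $I_0^{!}$ on $\PP^n_{\A^1}$ means Gysin pullback along the fiber $\PP^n\subset\PP^n_{\A^1}$ over $0$). So applying this base-change identity gives the commutativity of the asserted diagram.

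The one point that requires attention is the bookkeeping of the line bundles appearing in the twists. The pushforward $p_*$ changes the twist by the relative canonical bundle $\omega_{p}$; since $p\colon\PP^n_{\A^1}\to\A^1$ is a projective bundle of relative dimension $n$, we have $\omega_{p}=\Os(-n-1)$, which is precisely why $\Os(-n-1)$ appears in the top-left corner and disappears from the bottom left (giving $G^m$ after pushforward along a map of relative dimension $n$). The analogous statement holds for $p\colon\PP^n\to\Spec k$ on the right. Because the square is Cartesian and the relative canonical bundles pull back correctly ($\iota_0^*\omega_{\PP^n_{\A^1}/\A^1}=\omega_{\PP^n/k}$), the twists on both paths through the diagram agree, and the base change identity is compatible with the twists.

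The main obstacle I would expect is precisely this compatibility of twists and sign conventions, which must be traced through the definitions of Gysin map and pushforward in~\cite{Fasel} and~\cite{FaselThesis}; once this is set up correctly, the result reduces to an application of the standard base-change theorem cited above. No further content is required beyond carefully quoting this formula.
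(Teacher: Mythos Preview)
Your approach is conceptually sound but differs from the paper's, and it rests on a citation that does not quite do what you claim. You invoke a ready-made base-change identity $I_0^{!}\circ p_* = p_*\circ\iota_0^{!}$ from \cite[Chapitre 10]{FaselThesis}; however, Corollaire~10.4.5 there (which the paper also cites) only asserts that the proper pushforward is a morphism of complexes, not the full base-change compatibility with the oriented Gysin map, and certainly not in the version with supports that you need here. So as written, your proof has a gap: you are asserting the existence of a black-box theorem whose precise statement and applicability (with supports $Z_t$, $Z_0$, and with the twist $\Os(-n-1)$) you have not verified.

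The paper instead unpacks the definition of the Gysin map $I_0^{!}$ as the composite of (i) restriction to the open complement $\PP^n_{\A^1\setminus 0}$, (ii) multiplication by the symbol $\{t\}$, (iii) the connecting homomorphism $\partial$ in the localization sequence, and (iv) a d\'evissage isomorphism; it then checks that $p_*$ commutes with each of these four steps separately. Step (i) uses compatibility of proper pushforward with flat pullback \cite[Corollaire 12.3.7]{FaselThesis}; step (ii) is a projection formula argument reduced to the Scharlau transfer on residue fields; step (iii) follows because $p_*$ is a morphism of complexes and hence commutes with boundary maps of the short exact sequence of complexes with supports. This decomposition is more laborious but avoids having to locate or prove a general base-change theorem in the Chow--Witt setting. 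If you want to keep your strategy, you should either find the precise reference for Gysin/pushforward base change in the Chow--Witt literature (with supports and twists), or supply the argument yourself---which will in effect reproduce the paper's decomposition.
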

\begin{proof}
Denote $\Os(-n-1)$ by $L$ for brevity.
Recall that the Gysin morphism $I_0^{!}$ is defined as the composition
\[
H^n(C(\PP^n_{\A^1},G^{n+m},L)_{Z_t})\to H^n(C(\PP^n_{\A^1\setminus 0},G^{n+m},L)_{Z_t\setminus Z_0})
\stackrel{\cdot\{t\}}\to H^n(C(\PP^n_{\A^1\setminus 0},G^{n+m+1},L)_{Z_t\setminus Z_0})\stackrel{\partial}\to
\]
\[\to H^{n+1}(C(\PP^n_{\A^1},G^{n+m+1},L)_{Z_0})\stackrel{\cong}\to H^n(C(\PP^n,G^{n+m},L)_{Z_0}).
\]
We have to show that $p_*$ commutes with each arrow in this decomposition.

First, $p_*$ commutes with flat pullbacks by~\cite[Corollaire 12.3.7.]{FaselThesis}

Second, $p_*$ commutes with multiplication by $\{t\}$ by the projection formula which can be derived from the definition of $p_*$ and projection formula for the
transfer map for finite field extensions: Here we fix once a trivialisation of the canonical bundle on $\A^1.$
Following~\cite[\S 8]{FaselThesis} there map $p_*\colon C^i(\PP^n_{\A^1\setminus 0},W,L)\to C^{i-n}(\A^1\setminus 0,W)$ is defined as follows:
for every $x\in{\PP^n_{\A^1}}^{(i)}$ take $y=f(x)$ and define $\theta_x^y\colon W(k(x),L_x)\to W(k(y))$ using the Scharlau transfer map for finite extension when $k(x)$ is finite over $k(y)$ and set $\theta_x^y=0$ when $k(x)$ is an infinite extension of $k(y).$ This transfer map satisfies the projection formula property, as well as transfer for Milnor $K$-theory. Therefore induced map on cohomology satisfies the projection formula property.

Third, by~\cite[Corollaire 10.4.5.]{FaselThesis} the projection map $\PP^n_{\A^1}\to\A^1$ induces morphism of complexes $C^*(\PP^n_{\A^1},G^{n+m},L)\to C^{*-n}(\A^1,G^{m})$. Then it induces a morphism of short exact sequences of complexes:
\[
\xymatrix{
C^*(\PP^n_{\A^1},G^{n+m+1},L)_{Z_0}\ar@{^{(}->}[r]\ar[d]^{p_*} & C^*(\PP^n_{\A^1},G^{n+m+1},L)_{Z_t}\ar@{->>}[r]\ar[d]^{p_*} & C^*(\PP^n_{\A^1\setminus 0},G^{n+m+1},L)_{Z_t\setminus Z_0}\ar[d]^{p_*}\\
C^{*-n}(\A^1,G^{m+1})_{\{0\}}\ar@{^{(}->}[r] & C^{*-n}(\A^1,G^{m+1})\ar@{->>}[r] & C^{*-n}(\A^1\setminus 0),G^{m+1})
}
\]
Therefore $p_*$ commutes with the connecting homomorphism $\partial$ in the corresponding long exact sequence of cohomologies:
\end{proof}

\subsection{Map construction}\label{phiconst}
Using the results of~\ref{comsq} we will define a map
\[\Phi\colon H_0(\ZZ F(\Delta^{\bullet}_k,\Gm^{m}))\to K_m^{MW}(k).\]
Let $c=(U,\phi,f)\in Fr_n(X,\Gm^{m})$ represent a framed correspondence of level~$n$. Suppose that $X$ is affine. 
We will be interested in the cases $X=\A^1$ and $X=\Spec k$ so assume that there is a fixed trivialization of a canonical bundle on $X$.
Further we may assume that $U$ is affine.
Let $Z=\supp(c)$. Note that $Z=\cup \overline{\{z_i\}}$ for some points codimension $n$ points $z_1,\ldots,z_d\in U^{(n)}$.

The framing $\phi$ defines its Koszul complex $K(\phi):$
\[K_p(\phi)=\wedge^{p}(\oplus_{i=1}^nk[U]e_i), d_p\colon K_p(\phi)\to K_{p-1}(\phi)\]
\[d_p(e_{i_1}\wedge\ldots\wedge e_{i_p})=\sum_{j=1}^p (-1)^jf_{i_j}e_{i_1}\wedge\ldots\wedge e_{i_{j-1}}\wedge e_{i_{j+1}}\wedge\ldots\wedge e_{i_p}.\]
The complex $K(\phi)$ is a free resolution of $k[U]/(\phi_1,\ldots,\phi_n).$ We consider $K(\phi)$ as an element of the bounded derived category with support $D^b_Z(U).$
Endow $K(\phi)$ with the structure of the quadratic space:
\[\theta\colon K_p(\phi)\to Hom_{k[U]}(K_{n-p}(\phi),k[U]), \theta(x)\colon y\mapsto x\wedge y.\]
We will use the notation of~\cite[Definition 3.1]{Fasel}
Thus we  consider $K(\phi)$ as an element of $D^b_Z(U)\subseteq D^b(U)^{(n)}$ which defines a quadratic space $(K(\phi),\theta)\in W^n(D^b(U)^{(n)})$.
Let $K_c$ be the image of $(K(\phi),\theta)$ under the composition where the second map is the devissage isomorphism (see~\cite[Proposition 3.3]{Fasel})
\[W^n(D^b(U)^{(n)})\to W^n(D^b_n(U))\cong \oplus_{x\in U^{(n)}}W_{fl}(\Os_{U,x}).\]
Note that $K_c$ lies in the sum $\oplus_{j=1}^dW_{fl}(\Os_{U,z_j})$. The latter is canonically isomorphic to $\oplus_{j=1}W(k(z_j),\omega_{z_j/U})$
where $\omega_{z_j/U}\cong\bigwedge^n(\Ms_{U,z_j}/\Ms_{U,z_j}^2)$ (see~\cite[(6)]{BW}). The \'etale map $\alpha$ and trivialization of the canonical bundle on $\A^n_X$ gives an isomorphism $\bigwedge^n(\Ms_{U,z_j}/\Ms_{U,z_j}^2)\stackrel{\alpha^*}\cong (\Ms_{\A^n_X,z_j}/\Ms_{\A^n_X,z_j}^2)\cong k(z_j).$ Then for any $j=1..d$ we get the pair $y_j=(l_{\Os_{U,z_j}}(k[U]/(\phi)),K_c)\in \mathbb{Z}\times_{\mathbb{Z}/2}W(k(z_j))=GW(k(z_j)).$ The elements $f(z_j)=(a_{1,j},\ldots a_{m,j})\in\Gm^m(k(z_j))$
this gives element \[x_j=y_j\cdot[a_{1,j}]\ldots[a_{m,j}]\in K_m^{MW}(k(z_j)).\]
Note that since $\phi$ and $f$ are defined on $U$, this element has a zero residue for every point $x\in\overline{z_j}^{(1)}$, thus $x_c=\sum_{j=1}^d x_j$ defines an element in $H^n(C(U,G^{n+m})_Z)$.

Then we set $\Phi(c)$ to be the image of $y$ under the composition
\[H^n(C(U,G^{n+m})_Z)\stackrel{(j^*\circ\alpha^*)^{-1}}\to H^n(C(\PP^n_X,G^{n+m},L)_Z)\stackrel{p_*}\to H^0(C(X,G^m)).\]

Note that if we take an equivalent presentation of $c$ as $c'=(U',\phi',f')$ then the image of $y_{c'}\in H^n(C(U,G^{n+m})_Z)$ in $H^n(C(\A^n,G^{n+m})_Z)$ will coincide
with the image of $x_c$, sot his definition does not depend on the choice of presentation of the correspondence $c$.

Let us check that this definition is compatible with the stabilization map. Take a correspondence given by the data $c'=(U\times\A^1,(\phi,x),f\circ pr_U)$ in $Fr_{n+1}(k,\Gm^{\times m})$. Note that the Koszul complex $K((\phi,x))_i$ equals to $K(\phi)_i$ in $W(L_i)$ and for the corresponding element $y'\in H^{n+1}(C(U\times\A^1,G^{n+m+1})_Z)$ we have the diagram and the image of $x_{c'}$ under the composition $H^{n+1}(C(\A^{n+1}_X,G^{n+m+1},L)_Z)\stackrel{\cong}\to H^{n+1}(C(\PP^1\times\A^n_X,G^{n+m+1},L)_Z)\stackrel{p_*}\to H^{n}(C(\A^n_X,G^{n+m},L)_Z)$ equals to the image of $x_c$ under the isomorphism $H^{n}(C(U,G^{n+m})_Z)\to H^{n}(C(\A^{n},G^{n+m})_Z)$

The results of~\ref{comsq} allows us to check that this definition is compatible with the homotopy:
When $d=(U,\phi,f)\in Fr_n(\A^1,\Gm^{\times m})$, we get an element $x_d\in H^n(C(U,G^{n+m})_{Z})$. The element $x_d$ specialized at $0$ and $1$ gives the elements $x_{d_0}$ and $x_{d_1}$ where $d_0$ and $d_1$ in $Fr_n(k,\Gm^{\times m})$ are the fibers of $d$ over $0$ and $1$ respectively.
Then by remark~\ref{diag1} and~\ref{diag2} and homotopy invariance of Chow-Witt groups, we have that images of $x_{d_0}$ and $x_{d_1}$ are equal in $H^0(C(\Spec k,G^m))=K_m^{MW}(k).$

Therefore $\Phi$ descends to a map
\[\Phi_m\colon H_0(\ZZ F(\Delta^{\bullet}_k,\Gm^{m}))\to K_m^{MW}(k).\]

\begin{lem}\label{Phiongen}
For $a_1,\ldots, a_m\in k^{\times}$
$\Phi_m([x-a_1]\cdot\ldots\cdot[x-a_m])=[a_1]\cdot\ldots\cdot[a_m]$
\end{lem}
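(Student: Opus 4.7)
The plan is to unwind the construction of $\Phi_m$ on the explicit representative of the product and verify that, because the support is a single rational point with the standard regular sequence, the Koszul contribution equals $\langle 1\rangle$ so that only the symbols $[a_i]$ survive.

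First, I would identify the representative: by the definition of the multiplication in Section~\ref{MultStruct}, the product $[x-a_1]\cdot\ldots\cdot[x-a_m]$ is represented by the level-$m$ framed correspondence
\[
c=\bigl(\A^m_k,\,(x_1-a_1,\ldots,x_m-a_m),\,(x_1,\ldots,x_m)\bigr)\in Fr_m(\Spec k,\Gm^m),
\]
whose support is the single rational point $z=(a_1,\ldots,a_m)$ with residue field $k$. Because $z$ is rational and codimension $m$ in $\A^m$, the complex $C(\A^m,G^{2m})_Z$ concentrates in the top degree, and the whole computation of $x_c$ reduces to a single local term at~$z$.

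Next I would compute that local term. The length $l_{\Os_{\A^m,z}}(k[\A^m]/\phi)$ equals $1$ since $z$ is rational and $\phi$ is a regular system cutting out $z$ scheme-theoretically. The Koszul complex $K(\phi)$ with its canonical symmetric structure $\theta$, viewed in $W_{fl}(\Os_{\A^m,z})\cong W(k,\omega_{z/\A^m})$ via devissage, corresponds under the identification $\omega_{z/\A^m}\cong k$ sending $(x_1-a_1)\wedge\cdots\wedge(x_m-a_m)\mapsto 1$ to the unit form $\langle 1\rangle$; since the chart $\alpha$ is the identity $\A^m_k=\A^m_k$ the isomorphism $\alpha^*$ is trivial. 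Combining, one has $y_z=\langle 1\rangle\in GW(k)$, and since $f(z)=(a_1,\ldots,a_m)$, the local class equals
\[
x_c=\langle 1\rangle\cdot[a_1]\cdots[a_m]=[a_1]\cdots[a_m]\in K_m^{MW}(k).
\]

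Finally I would handle the pushforward $p_*\colon H^m(C(\PP^m,G^{2m},\Os(-m-1))_Z)\to K_m^{MW}(k)$. Under $(j^*\circ\alpha^*)^{-1}$ the class $x_c$ corresponds to the same $K_m^{MW}(k)$-element sitting at the rational point $z\in\A^m\subset\PP^m$. Because $z$ is a rational point and the complex $C(\PP^m,G^{2m},\Os(-m-1))_Z$ supported at $z$ is just $K_m^{MW}(k)$ after compatible trivializations, the pushforward along $p\colon\PP^m\to\Spec k$ acts as the identity on this local summand (the Scharlau transfer for the trivial extension $k/k$ is the identity, and the chosen orientations match). Therefore $\Phi_m([x-a_1]\cdot\ldots\cdot[x-a_m])=[a_1]\cdots[a_m]$.

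The main obstacle I anticipate is purely bookkeeping: checking that the various sign, orientation, and trivialization conventions (for $\omega_{z/\A^m}$, for $\Os(-n-1)$ on $\PP^m$, and for the Koszul symmetric form $\theta$) align so that the local class really is $\langle 1\rangle$ rather than $\langle u\rangle$ for some $u\in k^{\times}$. Once these identifications are fixed compatibly, the computation at a rational point is essentially tautological, and everything outside the point vanishes because the support has codimension exactly $m$.
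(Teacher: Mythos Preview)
Your approach is essentially the same as the paper's: identify the explicit level-$m$ representative, observe that the Koszul complex of the regular sequence $(x_i-a_i)$ at the rational point gives the unit form $\langle 1\rangle\in GW(k)$, and read off $x_c=[a_1]\cdots[a_m]$. One small correction: the \'etale neighborhood must be $\Gm^m$ (or an open of $\A^m$ on which all $x_i$ are invertible) so that $f$ actually lands in $\Gm^m$; the paper accordingly writes $c=(\Gm^m,(x_1-a_1,\ldots,x_m-a_m),id)$, but this does not affect your local computation.
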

\begin{proof}
The correspondence $[x-a_1]\cdot\ldots\cdot[x-a_m]$ is given by the correspondence $c=(\Gm^m,(x_1-a_1,\ldots,x_m-a_m),id)\in Fr_m(k,\Gm^{m}).$ Then its Koszul complex gives trivial element in $GW(k)$. Then $y_c=([a_1]\cdot\ldots\cdot[a_m])$, so $\Phi_m(c)=[a_1]\cdot\ldots\cdot[a_m].$
\end{proof}

\section{The case $m=0$}\label{m0Section}
In this section we prove~(\ref{m0}) that $\Phi_0$ induces an isomorphism between $H_0(\ZZ F(\Delta^{\bullet}_k,\Spec k))$ and $K_0^{MW}(k)=GW(k).$

For $p(x)\in k[x]$ let us denote the class of correspondence $(\A^1_k,p(x),pr_k)$ in $H_0(\ZZ F(\Delta^{\bullet}_k,\Spec k))$ by $\langle p(x)\rangle.$

\begin{lem}\label{deform0}
Suppose $p(x),q(x)\in k[x]$ be the polynomials with the same leading term. Then $\la p(x)\ra=\la q(x)\ra$
\end{lem}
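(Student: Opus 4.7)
The plan is to reduce the statement to Lemma~\ref{ratroots}, which was established earlier and asserts precisely this equivalence for degree-$n$ polynomials with the same leading term. With the notation $\la p(x)\ra = [(\A^1_k, p(x), pr_k)]$ introduced just above the lemma, the current statement is a direct restatement of the first assertion of Lemma~\ref{ratroots} in the new bracket notation, so one line of citation suffices.

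If I wanted to give the argument independently, the key step would be to construct a linear interpolation. Let $n = \deg p = \deg q$ and set $P(x,t) = p(x) + t\bigl(q(x)-p(x)\bigr) \in k[x,t]$. Because $p$ and $q$ share the leading term, the difference $q-p$ has degree strictly less than $n$, so the leading coefficient in $x$ of $P(x,t)$ is a nonzero constant independent of $t$. I would then verify that the triple $d = (\A^1_x \times \A^1_t, P(x,t), pr_k)$ defines a framed correspondence in $Fr_1(\A^1_t, \Spec k)$; this reduces to checking that $k[x,t]/(P)$ is a finite free $k[t]$-module of rank $n$, which is immediate from the observation on the leading coefficient. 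Specializing at $t=0$ and $t=1$ then yields $i_0^*(d) = (\A^1, p(x), pr_k)$ and $i_1^*(d) = (\A^1, q(x), pr_k)$, giving the claimed equality $\la p \ra = \la q \ra$ in $H_0(\ZZ F(\Delta^{\bullet}_k, \Spec k))$.

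There is no real obstacle here: this is the same homotopy trick already exploited in Lemma~\ref{ratroots}. The only subtlety worth emphasizing is the role of the hypothesis on leading terms, which is precisely what guarantees $t$-finiteness of $\supp(d)$; without it, the interpolating family would fail to be finite over $\A^1_t$ and the argument would not even produce a correspondence.
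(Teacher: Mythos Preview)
Your proposal is correct and matches the paper's own proof: the paper also writes $q=p+\delta$ with $\deg(\delta)<\deg(p)$ and uses the linear interpolation $(\A^2,p(x)+\delta(x)t,pr_k)\in Fr_1(\Spec k[t],\Spec k)$, exactly your $P(x,t)$. Your observation that this is just the first assertion of Lemma~\ref{ratroots} restated in the new notation is also correct; the paper simply repeats the short argument rather than citing it.
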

\begin{proof}
We have $q(x)=p(x)+\delta(x)$ with $\deg(\delta)<\deg(p).$ Then the data $(\A^2,p(x)+\delta(x)t,pr_k)$ defines a correspondence in $Fr_1(\Spec k[t],\Spec k)$ which gives a deformation between $\la p\ra$ and $\la q\ra.$
\end{proof}

\begin{rem}
By Lemma~\ref{multiplication} for a polynomial $p$ with rational separable roots $x_i, \ i=1,\ldots,n$ we have $\langle p(x)\rangle=\langle p'(x_1)x\rangle+\ldots \langle p'(x_n)x\rangle$ where $p'$ is the derivative of $p.$
\end{rem}

\begin{lem}\label{squar}
For any $\lambda,c\in k^{\times}$ the following holds: $\langle cx\rangle=\langle c\lambda^2x\rangle$ and $\langle cx\rangle+\langle -cx\rangle=\langle x\rangle+\langle -x\rangle.$
\end{lem}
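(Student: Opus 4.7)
The plan is to prove both identities using carefully chosen polynomial deformations, combining Lemma~\ref{deform0} (same leading term gives the same class) with the decomposition from the remark preceding this lemma: for a polynomial $p$ with rational separable roots $x_1,\dots,x_n$ one has $\langle p(x)\rangle=\sum_i\langle p'(x_i)x\rangle$.

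For the second identity, I would use the family $p_a(x)=c(x^2-a^2)=c(x-a)(x+a)$, $a\in k^\times$. Each $p_a$ has leading term $cx^2$ and rational separable roots $\pm a$ with $p_a'(\pm a)=\pm 2ca$, so $\langle p_a\rangle=\langle 2ca\,x\rangle+\langle -2ca\,x\rangle$. By Lemma~\ref{deform0} this expression is independent of $a$; and since $2ca$ surjects onto $k^\times$ as $a$ varies, it follows that $\langle\mu x\rangle+\langle-\mu x\rangle$ is the same element of $H_0$ for every $\mu\in k^\times$. Specializing to $\mu=c$ and $\mu=1$ yields $\langle cx\rangle+\langle-cx\rangle=\langle x\rangle+\langle-x\rangle$.

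For the first identity, the plan is to chain together two cubic deformations. The family $p_{c,\mu}(x)=cx(x^2-\mu^2)$ has leading term $cx^3$ and rational separable roots $0,\pm\mu$ with $p_{c,\mu}'(0)=-c\mu^2$ and $p_{c,\mu}'(\pm\mu)=2c\mu^2$, giving $\langle p_{c,\mu}\rangle=\langle-c\mu^2 x\rangle+2\langle 2c\mu^2 x\rangle$. Comparing the $\mu=1$ and $\mu=\lambda$ cases and converting the negative-argument terms via the second identity produces the recurrence $\langle cx\rangle-\langle c\lambda^2 x\rangle=2\bigl(\langle 2cx\rangle-\langle 2c\lambda^2 x\rangle\bigr)$. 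This alone only shows the difference is $2$-divisible, so a second family is required. I would then compare $cx(x-1)(x-\lambda^2)$ (rational roots $0,1,\lambda^2$) with the deformation $cx((x-1)^2-\lambda^2)=cx(x-1-\lambda)(x-1+\lambda)$ (rational roots $0,1\pm\lambda$). Both share the leading term $cx^3$, and evaluating the equality $\langle p_1\rangle=\langle p_2\rangle$ via the rational-root formula, together with the second identity to normalize signs, gives a second independent linear relation among the classes $\langle cx\rangle,\langle c\lambda^2 x\rangle,\langle c(1-\lambda^2)x\rangle,\langle c\lambda^2(1-\lambda^2)x\rangle$.

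The main obstacle is to combine these two polynomial identities — together with the substitution $\lambda\leftrightarrow 1/\lambda$, which via the second identity converts $1-\lambda^2$ into $-(1-\lambda^2)/\lambda^2$ and hence relates classes differing by a square factor — into a clean identity of the form $\langle cx\rangle+\langle c\mu x\rangle=\langle x\rangle+\langle-x\rangle$ for every non-trivial square $\mu\in(k^\times)^2\setminus\{1\}$. Once that is in hand, applying the identity twice with squares $\mu_1,\mu_2$ and subtracting gives $\langle cx\rangle=\langle c\mu_1\mu_2\,x\rangle$, and since every nonzero square is of this form (take $\mu_1$ arbitrary and $\mu_2=\lambda^2/\mu_1$), this yields the first identity. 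The delicate step is the combinatorial bookkeeping needed to extract the clean hyperbolic-type relation from the two messy linear relations above.
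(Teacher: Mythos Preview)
Your treatment of the second identity is essentially the paper's argument: deform $cx^2$ to $c(x^2-a^2)$, split at the two simple roots, and observe that the resulting expression $\langle 2ca\,x\rangle+\langle -2ca\,x\rangle$ is independent of the parameter. Fine.

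For the first identity, however, your plan has a genuine error: the intermediate target you are aiming for is \emph{false}. You propose to extract an identity of the form
\[
\langle cx\rangle+\langle c\mu\,x\rangle=\langle x\rangle+\langle -x\rangle
\]
for every nontrivial square $\mu$. Translating via the isomorphism $\Phi_0$ of Proposition~\ref{m0}, this reads $\langle c\rangle+\langle c\mu\rangle=h$ in $GW(k)$, i.e.\ $2\langle c\rangle=h$ since $\mu$ is a square. Over $k=\mathbb{R}$ with $c=1$ this fails (the left side has signature $2$, the right side $0$). So no amount of combinatorial bookkeeping with your two cubic families will produce that relation; the ``main obstacle'' you flag is in fact an obstruction.

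The paper sidesteps all of this with a single observation you overlooked: the second identity, once proved, says that $\langle ax^2\rangle$ is \emph{independent of} $a$. Indeed $\langle ax^2\rangle=\langle 2a\lambda\,x\rangle+\langle -2a\lambda\,x\rangle=\langle x\rangle+\langle -x\rangle$ for every $a\in k^\times$. Now instead of a cubic with three simple roots, take the cubic $cx^2(x+\lambda)$ with a \emph{double} root at $0$. Deforming $cx^3\sim cx^3+c\lambda x^2=cx^2(x+\lambda)$ by Lemma~\ref{ratroots}, and splitting the support at $0$ and $-\lambda$ via Lemma~\ref{multiplication}, gives
\[
\langle cx^3\rangle=\langle c\lambda^2 x\rangle+\langle c\lambda\,x^2\rangle.
\]
The second summand is constant in $\lambda$ (and in $c$) by the observation above, so $\langle c\lambda^2 x\rangle=\langle cx^3\rangle-\langle x^2\rangle$ is independent of $\lambda$; setting $\lambda=1$ finishes the proof in one line. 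The moral: allowing a non-reduced root lets the ``quadratic'' piece absorb all the dependence on the parameter, which your insistence on separable roots prevented.
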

\begin{proof}
By~\ref{ratroots} for any $\lambda,a\in k^{\times}$ we have $\langle ax^2\rangle=\langle ax^2-a\lambda^2\rangle=\langle 2a\lambda x\rangle+\langle -2a\lambda x\rangle.$ Thus $\langle x^2\rangle=\langle 2x\rangle+\langle -2x\rangle=\langle ax^2\rangle$. Lemma~\ref{ratroots} implies $\langle cx^3\rangle=\langle cx^3+c\lambda x^2\rangle=\langle cx^2(x+\lambda)\rangle=\langle c\lambda^2 x\rangle+\langle c\lambda x^2\rangle.$
Then
\[\langle c\lambda^2x\rangle=\langle cx^3\rangle-\langle c\lambda x^2\rangle=\langle cx^3\rangle-\langle cx^2\rangle=\langle cx\rangle.\]

\end{proof}

\begin{prop}\label{sum}
Suppose $a_1,a_2,b_1,b_2\in k^{\times}$ such that
\begin{itemize}
\item $a_1+a_2=b_1+b_2$,
\item $a_1a_2=\alpha^2 b_1b_2$ for some $\alpha\in k^{\times}$.
\end{itemize}
Then $\langle a_1x\rangle+\langle a_2x\rangle=\langle b_1x\rangle+\langle b_2x\rangle$.
\end{prop}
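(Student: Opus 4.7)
The plan is to exhibit a single cubic polynomial whose decomposition under Lemma~\ref{ratroots} yields exactly $\la a_1x\ra+\la a_2x\ra$ together with one extra ``correction'' term, then to run the same construction symmetrically for the $b_i$'s. Comparing the two cubics by Lemma~\ref{deform0} and absorbing square factors via Lemma~\ref{squar} reduces the claim to identifying the two correction terms, which follows at once from the hypothesis $a_1a_2=\alpha^2 b_1b_2$.

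Concretely, write $s=a_1+a_2=b_1+b_2$ and first treat the generic case $a_1\neq a_2$, $b_1\neq b_2$, $s\neq 0$. Set
\[
\gamma=\frac{2a_1a_2}{s},\qquad c=\frac{s}{(a_1-a_2)^2},\qquad p(x)=c(x-a_1)(x-a_2)(x-\gamma).
\]
A direct calculation (using $a_i-\gamma=a_i(a_i-a_{3-i})/s$) gives the derivative values $p'(a_1)=a_1$, $p'(a_2)=a_2$, $p'(\gamma)=-a_1a_2/s$, so that by Lemma~\ref{ratroots}
\[
\la p(x)\ra=\la a_1x\ra+\la a_2x\ra+\la -\tfrac{a_1a_2}{s}x\ra.
\]
Repeating the same construction with $a_i$ replaced by $b_i$ produces $q(x)=\tfrac{s}{(b_1-b_2)^2}(x-b_1)(x-b_2)(x-\tfrac{2b_1b_2}{s})$ with
\[
\la q(x)\ra=\la b_1x\ra+\la b_2x\ra+\la -\tfrac{b_1b_2}{s}x\ra.
\]
The leading coefficients of $p$ and $q$ differ by the square $\mu^2=\bigl((b_1-b_2)/(a_1-a_2)\bigr)^2\in(k^\times)^2$, so $p$ and $\mu^2 q$ share the same leading term. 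Lemma~\ref{deform0} gives $\la p\ra=\la \mu^2 q\ra$; and applying Lemma~\ref{ratroots} to $\mu^2 q$ followed by Lemma~\ref{squar} (to peel off the common factor $\mu^2$ from each of the three summands) gives $\la \mu^2 q\ra=\la q\ra$. Equating $\la p\ra=\la q\ra$ and invoking the hypothesis $a_1a_2=\alpha^2 b_1b_2$ together with Lemma~\ref{squar} to identify $\la -\tfrac{a_1a_2}{s}x\ra=\la -\tfrac{b_1b_2}{s}x\ra$, one cancels these correction terms from both sides and reads off the conclusion.

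The only nontrivial step is finding the cubic $p$: the conditions $p'(a_i)=a_i$ for a cubic with prescribed roots $a_1,a_2$ and a free third root $\gamma$ form a linear system in the two unknowns $(\gamma,c)$, whose unique solution is the one above. The degenerate cases are simpler: when $s=0$ both sides equal $\la x\ra+\la -x\ra$ by Lemma~\ref{squar} alone; when $a_1=a_2$ (which forces $b_1,b_2$ to lie symmetrically about $a_1$ with $b_1b_2=a_1^2/\alpha^2$) one can either reduce to the non-degenerate $b$-side by comparing the cubic $q$ above against a cubic of the shape $c(x-a_1)^2(x-\gamma)$ of the same leading term via Lemma~\ref{deform0}, or invoke the subcase $\alpha=\pm 1$ (where $b_1=b_2=a_1$) directly.
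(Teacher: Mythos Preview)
Your approach is correct and follows the same blueprint as the paper's proof: pick a cubic whose Lemma~\ref{ratroots} decomposition yields $\la a_1x\ra+\la a_2x\ra$ plus a correction $\la -\tfrac{a_1a_2}{s}x\ra$, do the same on the $b$-side, compare the two cubics via Lemma~\ref{deform0}, and kill the correction terms with Lemma~\ref{squar} and the hypothesis $a_1a_2=\alpha^2b_1b_2$. Your derivative computations are right.

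Where the paper is slicker is in the choice of cubic. Instead of placing the roots at $a_1,a_2,\gamma$, the paper takes
\[
p(x)=x(x-1)(dx-a_1),\qquad d=a_1+a_2=b_1+b_2,
\]
so that $p'(0)=a_1$, $p'(1)=a_2$, $p'(a_1/d)=-a_1a_2/d$. This choice buys two things at once. First, the $a$-cubic and the $b$-cubic $x(x-1)(dx-b_1)$ have \emph{identical} leading coefficient $d$, so Lemma~\ref{deform0} applies directly with no square rescaling and no extra appeal to Lemma~\ref{squar}. Second, the three roots $0,1,a_1/d$ are automatically distinct whenever $a_1,a_2\in k^\times$ and $d\neq 0$, so the entire case analysis for $a_1=a_2$ (or $b_1=b_2$) disappears. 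Your treatment of that degenerate case is only sketched; it can be completed, but the paper's cubic simply sidesteps it. The case $d=0$ is handled the same way in both arguments, via Lemma~\ref{squar}.
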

\begin{proof}
Consider a polynomial $p(x)$ such that $p(0)=p(1)=0,$ $p'(0)=a_1$, $p'(1)=a_2$.
We can take $p=x(x-1)((a_1+a_2)x-a_1)=x(x-1)((b_1+b_2)x-a_1)$. Denote by $d$ the element $d=a_1+a_2=b_1+b_2$.

By~\ref{ratroots} $\langle x(x-1)(dx-a_1)\rangle= \langle x(x-1)(dx-b_1)\rangle.$
On the other hand
$\langle x(x-1)(dx-a_1)\rangle=\langle a_1x\rangle+\langle a_2x\rangle+\langle \frac{-a_1a_2}{d}x\rangle$ and
$\langle x(x-1)(dx-b_1)\rangle=\langle b_1x\rangle+\langle b_2x\rangle+\langle \frac{-b_1b_2}{d}x\rangle.$

Since $a_1a_2=\alpha^2b_1b_2$, Lemma~\ref{squar} implies that $\langle \frac{-a_1a_2}{d}x\rangle=\langle \frac{-b_1b_2}{d}x\rangle$. Then $\langle a_1x]+\langle a_2x\rangle=\langle b_1x\rangle+\langle b_2x\rangle$
\end{proof}

\begin{lem}\label{inverse}
There is a surjective homomorphism $\Psi_0\colon K_0^{MW}(k)=GW(k)\to H_0(\ZZ F(\Delta^{\bullet}_k,\Spec k))$ such that $\Psi_0\colon\langle a\rangle\mapsto\langle ax\rangle$ for any $a\in k^{\times}$
\end{lem}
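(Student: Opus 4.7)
To construct $\Psi_0$ it suffices to verify that the assignment $\la a \ra \mapsto \la ax \ra$ respects the defining relations of $K_0^{MW}(k) = GW(k)$. I use the presentation of $GW(k)$ as the abelian group generated by $\la a \ra$ for $a \in k^{\times}$, subject to the relations
\begin{enumerate}
\item[(W1)] $\la ab^2 \ra = \la a \ra$;
\item[(W2)] $\la a \ra + \la b \ra = \la a+b \ra + \la ab(a+b) \ra$ whenever $a+b \neq 0$;
\item[(W3)] $\la a \ra + \la -a \ra = \la 1 \ra + \la -1 \ra$.
\end{enumerate}
First I extend the assignment to $\tilde\Psi_0 \colon \ZZ[k^\times] \to H_0(\ZZ F(\Delta^{\bullet}_k, \Spec k))$ by $[a] \mapsto \la ax \ra$; then I check that the above relations hold in $H_0$. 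Relations (W1) and (W3) are immediate from the two assertions of Lemma~\ref{squar}.

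The main step is verifying (W2), i.e.\
\[
\la ax \ra + \la bx \ra = \la (a+b)x \ra + \la ab(a+b) x \ra.
\]
My plan is to realize both sides as components of three-term identities arising from cubic polynomials with separable rational roots. Concretely, the cubic $p(x) = x(x-1)((a+b)x - a)$ has rational roots $0, 1, a/(a+b)$ with derivative values $a, b, -ab/(a+b)$; Lemma~\ref{ratroots} combined with Lemma~\ref{deform0} gives
\[
\la ax \ra + \la bx \ra + \la (-ab/(a+b))\,x \ra = \la (a+b)x^3 \ra.
\]
An analogous cubic realizing the pair $((a+b), ab(a+b))$ (of sum $(a+b)(1+ab)$ and product $ab(a+b)^2$) produces a second three-term identity. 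Subtracting the two, computing $\la cx^3 \ra$ from the factorization $cx(x-1)(x+1)$ as $\la -cx \ra + 2\la 2cx \ra$, then applying Proposition~\ref{sum} to match pairs with equal first-power sum and equal product-mod-squares, and finally collapsing hyperbolic pairs $\la cx \ra + \la -cx \ra = \la x \ra + \la -x \ra$ via Lemma~\ref{squar}, should produce (W2) after careful bookkeeping of square classes.

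Surjectivity is essentially established by the preceding reduction chain: Lemma~\ref{Moving} reduces any class in $H_0$ to a difference of simple level-$1$ correspondences; Lemmas~\ref{red1} and~\ref{red2} reduce each such class to a polynomial correspondence of the form $\la q(x)p(x) \ra$ with $\deg q < \deg p$; and Lemma~\ref{kk} shows every such correspondence is equivalent to a standard one, i.e.\ a $\ZZ$-linear combination of $\la \lambda x \ra$ for $\lambda \in k^\times$. These are exactly the images $\Psi_0(\la \lambda \ra)$, so the surjectivity of $\Psi_0$ follows.

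\textbf{Main obstacle.} The delicate step is (W2). Proposition~\ref{sum} only transports between two-term sums $\la a_1 x \ra + \la a_2 x \ra$ and $\la b_1 x \ra + \la b_2 x \ra$ that share the first-power sum $a_1+a_2 = b_1+b_2$, whereas the two sides of (W2) have sums $a+b$ and $(a+b)(1+ab)$ differing by the factor $1+ab$, which is not generally a square. The cubic-polynomial identity together with the explicit formula for $\la cx^3 \ra$ is what is meant to supply the missing square-class adjustment, but this cancellation is the subtle technical point of the argument.
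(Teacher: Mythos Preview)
Your treatment of (W1), (W3), and surjectivity is correct and matches the paper's proof verbatim: both come straight from Lemma~\ref{squar}, and surjectivity is Lemma~\ref{kk}.

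The gap is (W2). You correctly note that Proposition~\ref{sum} does not apply to the pairs $(a,b)$ and $\bigl((a+b),(a+b)ab\bigr)$ as written, because their first-power sums $a+b$ and $(a+b)(1+ab)$ differ. But your proposed workaround --- two separate cubic identities, an explicit evaluation of $\la cx^3\ra$, and a hoped-for cancellation --- is not brought to completion, and you flag it yourself as the ``subtle technical point.'' In fact the cubic identity you write down is exactly the computation inside the proof of Proposition~\ref{sum}, so you are re-deriving that proposition in a situation where its hypotheses fail rather than arranging for them to hold.

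The observation you are missing, and the one the paper uses, is that Lemma~\ref{squar} already gives you enough freedom to force the sum condition. Since $\la cx\ra = \la c\lambda^2 x\ra$ for every $\lambda\in k^\times$, you may replace the pair $\bigl((a+b),(a+b)ab\bigr)$ by $\bigl((a+b)\lambda^2,(a+b)ab\,\mu^2\bigr)$ for any nonzero $\lambda,\mu$. The product-mod-squares condition of Proposition~\ref{sum} is unaffected by such rescaling, so it remains to choose $\lambda,\mu$ with
\[
(a+b)\lambda^2 + (a+b)ab\,\mu^2 \;=\; a+b,
\qquad\text{i.e.}\qquad
\lambda^2 + ab\,\mu^2 = 1.
\]
This conic has the rational point $(1,0)$, hence is rational over $k$; since $k$ is infinite one can pick a point with both coordinates nonzero. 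Proposition~\ref{sum} then yields
\[
\la ax\ra + \la bx\ra \;=\; \la (a+b)\lambda^2 x\ra + \la (a+b)ab\,\mu^2 x\ra,
\]
and one more application of Lemma~\ref{squar} removes the squares. This is the paper's one-line argument: first Lemma~\ref{squar}, then Proposition~\ref{sum}. (The explicit square factors $(1+b)^2$ and $(1+a)^2$ printed in the paper do not actually make the sums agree and appear to be a slip, but the method is the one just described.)
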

\begin{proof}
According to the lemma~\cite[2.9]{Morel} the Grothendieck-Witt group is generated by the 1-forms $\langle a \rangle$ modulo the relations:
\begin{itemize}
\item $\langle ab^2\rangle=\langle a\rangle$,
\item $\langle a \rangle + \langle -a \rangle = \langle 1 \rangle + \langle -1 \rangle$,
\item $\langle a\rangle + \langle b\rangle = \langle a+b \rangle + \langle (a+b)ab \rangle$ if $a+b\neq 0.$
\end{itemize}
By lemma~\ref{squar} in $H_0(\ZZ F(\Delta^{\bullet}_k,\Spec k))$ we have $\langle ab^2x\rangle=\langle ax\rangle$ and $\langle cx\rangle+\langle -cx\rangle=\langle x\rangle+\langle -x\rangle.$ Lemma~\ref{sum} implies the third relation $\langle ax\rangle+\langle bx\rangle=\langle a(1+b)^2x\rangle+\langle b(1+a)^2x\rangle=\langle (a+b)x\rangle+\langle (a+b)abx\rangle.$
Thus the assignment $\langle a \rangle\mapsto \langle ax\rangle$ gives a well-defined homomorphism $F.$ It is surjective by~\ref{kk}.
\end{proof}

\begin{prop}\label{m0}
The maps $\Psi_0$ and $\Phi_0$ are mutually inverse ring isomorphisms between $H_0(\ZZ F(\Delta^{\bullet}_k,\Spec k))$ and $K_0^{MW}(k).$
\end{prop}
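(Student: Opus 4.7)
The plan is to verify that $\Phi_0 \circ \Psi_0 = \mathrm{id}_{GW(k)}$ by a direct computation on the additive generators $\langle a\rangle$ of $GW(k)$, then deduce bijectivity of both maps formally from the surjectivity of $\Psi_0$ (Lemma~\ref{inverse}), and finally check multiplicativity of $\Psi_0$ on those same generators.

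First I would unwind the construction of $\Phi_0$ from Section~\ref{cortok} on the correspondence $c = (\A^1, ax, pr_k) \in Fr_1(k, k)$. Here $n=1$, $m=0$, $Z = \{0\}$ is a single rational point, $\alpha = \mathrm{id}$, and $j$ is the standard open embedding $\A^1 \hookrightarrow \PP^1$. The Koszul complex of the single function $ax$ is $0 \to k[x] \xrightarrow{\cdot ax} k[x] \to 0$; its self-dual structure, followed through the devissage isomorphism and the trivialization $\omega_{0/\A^1} \cong k$ coming from the standard coordinate on $\A^1$, produces the one-dimensional quadratic form $\langle a\rangle \in W(k)$, essentially because the residue of the derivative is $a$. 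Together with $\operatorname{length}_{\Os_{\A^1,0}}(k[x]/(ax)) = 1$, this yields $y_c = \langle a\rangle \in GW(k)$, and since $m=0$ there are no $[a_i]$ factors, so $x_c = \langle a\rangle$. The subsequent transport through $\PP^1$ and pushforward to $\Spec k$ leave this class unchanged because $Z$ is a single rational point. Hence $\Phi_0(\langle ax\rangle) = \langle a\rangle$.

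Since the classes $\langle a\rangle$ for $a \in k^\times$ generate $GW(k)$ as an abelian group, the previous step gives $\Phi_0 \circ \Psi_0 = \mathrm{id}$ on $GW(k)$. Combined with the surjectivity of $\Psi_0$, a formal argument yields $\Psi_0 \circ \Phi_0 = \mathrm{id}$ as well: writing $y = \Psi_0(x)$, one has $\Psi_0(\Phi_0(y)) = \Psi_0(\Phi_0(\Psi_0(x))) = \Psi_0(x) = y$. Hence $\Psi_0$ and $\Phi_0$ are mutually inverse isomorphisms of abelian groups. To upgrade this to a ring isomorphism I would check multiplicativity of $\Psi_0$ on generators. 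The external product $\Psi_0(\langle a\rangle) \cdot \Psi_0(\langle b\rangle) = \langle ax\rangle \cdot \langle bx\rangle$ is represented by $(\A^2, (ax_1, bx_2), pr_k) \in Fr_2(k,k)$. Applying Lemma~\ref{SL} with $A = \operatorname{diag}(b, b^{-1}) \in \gSL_2(k)$ replaces this by $(\A^2, (abx_1, x_2), pr_k)$, which equals $\sigma_{\Spec k}(\A^1, abx, pr_k)$ and therefore represents $\langle abx\rangle = \Psi_0(\langle ab\rangle) = \Psi_0(\langle a\rangle \cdot \langle b\rangle)$ in the colimit $H_0$. Bilinearity extends this to all of $GW(k)$.

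The main obstacle is the first step: correctly tracking the self-dual Koszul complex through the devissage isomorphism $W^1(D^b_1(\A^1)) \cong \bigoplus_{x} W_{fl}(\Os_{\A^1, x})$, the canonical identification $W_{fl}(\Os_{\A^1, 0}) \cong W(k, \omega_{0/\A^1})$, and the chosen trivialization of $\omega_{0/\A^1}$, all to identify the class with $\langle a\rangle \in W(k)$. Once that identification is in place (it is the natural $m=0$ analog of Lemma~\ref{Phiongen}, now with a nontrivial leading coefficient in the framing), the remaining steps are formal.
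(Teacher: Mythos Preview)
Your proposal is correct and follows essentially the same route as the paper: compute $\Phi_0(\langle ax\rangle)=\langle a\rangle$, combine with the surjectivity of $\Psi_0$ from Lemma~\ref{inverse} to get mutual inverses, and check multiplicativity via Lemma~\ref{SL} with the diagonal matrix $\operatorname{diag}(b,b^{-1})$. The one place where the paper is more explicit is exactly the step you flag as the obstacle: it pins down the devissage identification by invoking Gille's formula $\nu(\langle a\rangle)=s^*(\langle a\rangle)\star K(x)$ and exhibiting an explicit isomorphism of quadratic spaces between this and $K(ax)$, which is precisely the missing ingredient in your sketch.
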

\begin{proof}
Check that $\Phi_0(\langle ax\rangle)=\langle a\rangle$. The Koszul complex $K(ax)=(0\to k[x]\stackrel{\cdot ax}\to k[x]\to 0)$. It defines an element in $W^1_{\{0\}}(\A^1)\cong W(k).$
According to the results of Gille~\cite[Definition 8.2]{Gille} the devissage isomorphism $\nu\colon W(k)\to W^1_{\{0\}}(\A^1_k)$ is given by $y\mapsto s^*(y)\star K(x)$
where $K(x)$ is the Koszul complex of framing $x$.
Then $\nu(\langle a\rangle)$ is given by 
\[
\xymatrix{
k[x] e\ar[d]^{\cdot x} \ar[r]^{\cdot a} & k[x]\ar[d]^{\cdot(-x)} &=Hom_{k[x]}(k[x],k[x])\\
k[x] \ar[r]^{\cdot(-a)} & k[x]&=Hom_{k[x]}(k[x] e,k[x])
}
\]
is isomorphic to the quadratic space $K(ax)$ by means of isomorphism
\[
\xymatrix{
k[x]\ar[d]^{\cdot x} \ar[r]^{\cdot 1} & k[x]\ar[d]^{\cdot a x}\\
k[x] \ar[r]^{\cdot a} & k[x]
}
\]
Thus $K(ax)$ is mapped to $\langle a\rangle\in W(k)\subset H^1(C(\A^1,G^1))$, therefore $\Phi_0(\langle ax\rangle)=\langle a\rangle.$ Thus $\Phi_0\circ\Psi_0=id_{GW(k)}$ and $\Psi_0$ is surjective by~\ref{inverse}. Then $\Phi_0$ and $\Psi_0$ are mutually inverse isomorphisms. It remains to show that $\Psi_0$ is a ring isomorphisms. Indeed, $\Psi_0(\la ab\ra)=\la abx\ra=(\A^1,abx,pr_k)\sim(\A^2,(abx,x_2),pr_k)\stackrel{\ref{SL}}\sim(\A^2,(ax,bx_2),pr_k)=\la ax\ra\cdot\la bx\ra=\Psi_0(\la a\ra)\cdot \Psi_0(\la b\ra).$
\end{proof}

\begin{dfn} Following~\cite[Lemma 2.14]{Morel} we denote $n_{\epsilon}=\sum_{i=1}^n\la(-1)^{i-1}\ra$ for $n\geqslant 0$ and $n_{\epsilon}=-\la-1\ra(-n)_{\epsilon}$ for $n<0.$
\end{dfn}

\begin{rem}
$\Phi_0(\la x^n\ra)=n_{\epsilon}$.
\end{rem}
\begin{proof}
By~\ref{m0} $\Phi_0(\la x\ra)=\la 1\ra$.
By induction $\Phi_0(\la x^n\ra)\stackrel{\ref{deform0}}=\Phi_0(\la x^{n-1}(x+1)\ra)=(n-1)_{\epsilon}+\la(-1)^{n-1}\ra.$
\end{proof}

\begin{lem}\label{trcomp}
Suppose $p(x)\in k[x]$ and $U$ be an open subset of $\A^1_k$ and $p(x)=q(x)(x-\lambda_1)^{r_1}\cdot\ldots\cdot(x-\lambda_k)^{r_k}$ where $q(x)$ is invertible over $U$ and $\lambda_i\in k.$ Let $c$ be a class of correspondence in $Fr_1(k,k)$ given by data $(U,p(x),pr_k)$. Then we have in $K_0^{MW}(k)$
\[
\Phi_0(c)=\sum_{i=1}^k(r_i)_{\epsilon}\la q(\lambda_i)\prod_{j\neq i}(\lambda_i-\lambda_j)^{r_j}\ra=-\partial^{-1/x}_{\infty}(\sum_{i=1}^k[(x-\lambda_i)^{r_i}\cdot q(\lambda_i)\prod_{j\neq i}(\lambda_i-\lambda_j)^{r_j}]).\]
\end{lem}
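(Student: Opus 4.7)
The plan is to reduce the computation of $\Phi_0(c)$ to the case of a single rational point of support, namely $\la a x^r\ra$ for $a\in k^\times$ and $r\geqslant 1$, and then to evaluate this inductively on $r$, paralleling the computation done in the remark just above.

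First, I decompose $c$ into pieces with single-point support. Any $\lambda_j\notin U$ contributes a factor invertible on $U$ and can be absorbed into $q$, so without loss of generality $\{\lambda_1,\ldots,\lambda_k\}\subset U$, and $\supp(c)=\{\lambda_1,\ldots,\lambda_k\}$ is a disjoint union of rational points. By Definition~\ref{13}, $c=\sum_{i=1}^k c_i$ with $c_i=(U_i,p(x),pr_k)$, where $U_i\subseteq U$ is an open neighbourhood of $\lambda_i$ not containing the remaining $\lambda_j$. On $U_i$ we write
\[
p(x)=a_i(x-\lambda_i)^{r_i}(1+m_i),\qquad a_i=q(\lambda_i)\prod_{j\neq i}(\lambda_i-\lambda_j)^{r_j},
\]
with $m_i\in \Ms_{\lambda_i,U_i}$. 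Lemma~\ref{multiplication} then gives $c_i\sim (U_i',a_i(x-\lambda_i)^{r_i},pr_k)$, and since the support is a single rational point this class equals $\la a_i(x-\lambda_i)^{r_i}\ra$, which by Lemma~\ref{deform0} (same leading term) equals $\la a_i x^{r_i}\ra$.

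Second, I would prove by induction on $r\geqslant 1$ the identity $\Phi_0(\la ax^r\ra)=(r)_\epsilon\la a\ra$ for every $a\in k^\times$. The base case $r=1$ is established in the proof of Proposition~\ref{m0}. For the inductive step, Lemma~\ref{deform0} gives $\la ax^r\ra=\la ax^{r-1}(x+1)\ra$; the support of this correspondence is $\{0,-1\}$, and Definition~\ref{13} splits it into two pieces. Near $0$, the factor $1+x$ equals $1$ modulo $\Ms_0$, so Lemma~\ref{multiplication} produces the class $\la ax^{r-1}\ra$; near $-1$, the factor $x^{r-1}$ equals $(-1)^{r-1}$ modulo $\Ms_{-1}$, so Lemma~\ref{multiplication} combined with Lemma~\ref{deform0} (to translate the root to $0$) produces $\la a(-1)^{r-1}x\ra$. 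Additivity of $\Phi_0$, together with the base case and the inductive hypothesis, then gives
\[
\Phi_0(\la ax^r\ra)=(r-1)_\epsilon\la a\ra+\la a(-1)^{r-1}\ra=\bigl((r-1)_\epsilon+\la (-1)^{r-1}\ra\bigr)\la a\ra=(r)_\epsilon\la a\ra
\]
by the definition of $(r)_\epsilon$. Summing over $i$ gives the first equality, $\Phi_0(c)=\sum_{i=1}^k(r_i)_\epsilon\la a_i\ra$.

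The second equality is a residue identity in Milnor--Witt $K$-theory: with the uniformizer $u=-1/x$ at infinity one has $(x-\lambda_i)^{r_i}a_i=a_i(-1)^{r_i}u^{-r_i}(1+\lambda_i u)^{r_i}$, and expanding the symbol $[(x-\lambda_i)^{r_i}a_i]\in K^{MW}_1(k(x))$ via $[xy]=[x]+[y]+\eta[x][y]$ and extracting $\partial^{-1/x}_\infty$, only the $u^{-r_i}$ piece survives and yields exactly $-(r_i)_\epsilon\la a_i\ra$. The substantive obstacle is the residue identification in $K^{MW}$, which requires careful tracking of signs and of the $\eta$-corrections in the Milnor--Witt relation; the inductive computation of $\Phi_0(\la ax^r\ra)$ is direct once the additive decomposition is set up, and essentially generalises the argument in the preceding remark from $a=1$ to arbitrary $a$.
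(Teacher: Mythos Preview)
Your argument is correct and follows the same route as the paper. The only difference is organizational: the paper decomposes $c=\sum_i(U_i,p(x),pr_k)\sim\sum_i\la a_i(x-\lambda_i)^{r_i}\ra$ via Lemma~\ref{multiplication} exactly as you do, and then invokes the already-established identity $\Phi_0(\la x^n\ra)=n_\epsilon$ from the preceding Remark together with the ring isomorphism of Proposition~\ref{m0} (so that $\la a_i x^{r_i}\ra=\la a_i x\ra\cdot\la x^{r_i}\ra$ after one stabilization and Lemma~\ref{SL}) to get $\Phi_0(\la a_i x^{r_i}\ra)=(r_i)_\epsilon\la a_i\ra$, whereas you instead rerun the induction of that Remark with the scalar $a$ carried along. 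For the second equality the paper gives the one-line residue computation $-\partial^{-1/x}_\infty([(x-\lambda_i)^{r_i}u])=-(-r_i)_\epsilon\la(-1)^{r_i}u\ra=\la(-1)^{r_i+1}\ra(r_i)_\epsilon\la u\ra=(r_i)_\epsilon\la u\ra$, which is precisely what your sketch unpacks.
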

\begin{proof}
By~\ref{multiplication} we have $(U,p(x),pr_k)=\sum_{i=1}^k(U_i,p(x),pr_k)\sim\sum_{i=1}^k\la q(\lambda_i)(x-\lambda_i)^{r_i}\prod_{j\neq i}(\lambda_i-\lambda_j)^{r_j}\ra$ where $U_i$ is the neighborhood of $\lambda_i$ that does not contain any $\lambda_j$ for $j\neq i$. Since $\Phi_0(\la x^n\ra)=n_{\epsilon}$ in $K_0^{MW}(k)$ we get the first equality. The second follows from the fact that  for $u\in k^{\times}$ we have $-\partial^{-1/x}_{\infty}([(x-\lambda_i)^{r_i}u])=-((-r_i)_{\epsilon}\la(-1)^{r_i}u\ra)=\la(-1)^{r_i+1}\ra(r_i)_{\epsilon}\la u\ra=(r_i)_{\epsilon}\la u\ra.$
\end{proof}

\begin{lem}\label{com0} Suppose $[L:k]=l$ is a prime number and $k$ has no prime-to-$l$ extensions.
Then the transfer diagram is commutative
\[
\xymatrix{
K_0^{MW}(L)\ar[rr]^{\Psi_0}\ar[d]^{Tr^L_k} && H_0(\ZZ F(\Delta^{\bullet}_L,\Spec L))\ar[d]^{tr_{L/k}}\\
K_0^{MW}(k)\ar[rr]^{\Psi_0} && H_0(\ZZ F(\Delta^{\bullet}_k,\Spec k))
}
\]
\end{lem}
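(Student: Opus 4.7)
The plan is to transport the desired commutativity across the isomorphism $\Phi_0$. By Proposition~\ref{m0} (applied over both $k$ and $L$), the maps $\Phi_0$ and $\Psi_0$ are mutually inverse ring isomorphisms between $H_0(\ZZ F(\Delta^{\bullet}_k,\Spec k))$ and $GW(k)$, and likewise between $H_0(\ZZ F(\Delta^{\bullet}_L,\Spec L))$ and $GW(L)$. Composing the right column of the diagram with $\Phi_0$ therefore reduces the statement to verifying, for every $a\in L^{\times}$, the identity
\[
\Phi_0\bigl( tr_{L/k}(\Psi_0^L(\la a\ra)) \bigr) \;=\; Tr^L_k(\la a\ra)
\]
in $GW(k)$.

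First I would unfold the left-hand side as an explicit framed correspondence. Fix a primitive element $\alpha\in L$ with $L=k(\alpha)$; by Remark~\ref{generator}, $c_{L/k}$ is represented by $(\A^1_L,\, x-\alpha,\, pr_L)$. Composing with $\Psi_0^L(\la a\ra) = (\A^1_L,\, ay,\, pr_L)$ yields the level-$2$ correspondence
\[
c \;=\; (\A^2_L,\, (x-\alpha,\, a y),\, pr_k) \in Fr_2(\Spec k,\Spec k),
\]
whose support is a single closed point $Z$ of $\A^2_k$ with residue field $L$ via the \'etale neighborhood $\A^2_L\to\A^2_k$.

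Next I would apply the construction of Section~\ref{phiconst} to compute $\Phi_0(c)$. The Koszul complex of the regular sequence $(x-\alpha,\,ay)$ in $L[x,y]$ gives, through the devissage isomorphism, a class in $W(L,\omega_{Z/\A^2_k})$. Trivializing $\omega_{Z/\A^2_k}$ by the standard coordinates of $\A^2_k$ and the \'etale map, the Jacobian determinant of $(x-\alpha,\,ay)$ at $Z$ equals $a$, so the untwisted class in $W(L)$ is precisely the one-dimensional form $\la a\ra$. The remaining ingredient of $\Phi_0$ is the Chow--Witt pushforward along the finite separable morphism $Z\cong\Spec L\to\Spec k$; as recalled in the proof of Lemma~\ref{diag2} and in~\cite[Chapitre 8]{FaselThesis}, this pushforward is precisely the Scharlau transfer $Tr^L_k\colon W(L)\to W(k)$. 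Combining the two steps gives $\Phi_0(c) = Tr^L_k(\la a\ra)$, and applying $\Psi_0 = \Phi_0^{-1}$ to both sides then establishes the commutativity of the diagram on generators, hence by $\mathbb{Z}$-linearity on all of $GW(L)$.

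The main point of difficulty is the middle step: one must carefully trace the Koszul quadratic space through the devissage isomorphism with its $\omega$-twist and pair it with the Jacobian trivialization so that the resulting untwisted class is literally $\la a\ra$ and not, say, $\la a^{-1}\ra$ or a square multiple. The hypotheses that $\ell=[L:k]$ is prime and that $k$ has no prime-to-$\ell$ extensions do not seem to enter this particular argument directly; they are presumably the setting in which Lemma~\ref{com0} is required in the construction of the map $\Psi$ in Section~\ref{ktocor}.
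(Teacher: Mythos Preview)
Your strategy differs substantially from the paper's, and it carries a genuine gap at the key step. The paper's proof is an explicit calculation that uses both hypotheses: choosing a generator $\alpha$ (using primality of $l$) and writing $p'(x)$ as a product of linear factors over $k$ (using that $k$ has no prime-to-$l$ extensions), it expresses $tr_{L/k}(\Psi_0(\la\alpha\ra))$ as a level-one framed correspondence with rational support, applies Lemma~\ref{trcomp}, and then matches the result term-by-term with Morel's formula for $Tr^L_k(\la\alpha\ra)$ computed via the residue $-\partial_\infty^{-1/x}$. In other words, the paper never invokes any abstract compatibility between the Chow--Witt pushforward and Morel's canonical transfer; it checks equality by hand on a generator, and the hypotheses on $k$ and $l$ are what make that hand-check tractable.

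Your argument, by contrast, reduces everything to the single assertion that the Fasel/Chow--Witt pushforward $p_*\colon GW(L)\to GW(k)$ used in the construction of $\Phi_0$ coincides with Morel's canonical transfer $Tr^L_k$ from~\cite[Definition~3.26, Theorem~3.27]{Morel}. You justify this by saying ``this pushforward is precisely the Scharlau transfer,'' but this is exactly the nontrivial point: Morel's $Tr^L_k$ is defined as the geometric transfer $\tau^L_k(\alpha)$ twisted by $\la\omega_0(\alpha)\ra=\la p'(\alpha)\ra$, not a priori as the Scharlau trace form transfer, and the paper neither proves nor cites this identification. If you could supply (or cite) that compatibility, your argument would indeed prove the lemma without any hypothesis on $k$ or on $[L:k]$, which would be strictly stronger than what the paper establishes here---but as written, the step ``$p_*(\la a\ra)=Tr^L_k(\la a\ra)$'' is an unproved assumption, and your observation that the hypotheses ``do not seem to enter this particular argument'' should have flagged that something is being taken for granted.
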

\begin{proof}
Take $\alpha\in L$. Since $l$ is prime, we may assume that $L=k(\alpha).$ Then $tr_{L/k}(\Psi_0(\la\alpha\ra))=tr_{L/k}(\la\alpha x\ra)\sim(\A^1_L,\alpha(x-\alpha),pr_k).$ Let $p(x)\in k[x]$ be the minimal monic polynomial for $\alpha.$ Then $(\A^1_L,\alpha(x-\alpha),pr_k)\sim(U',xp'(x)p(x),pr_k)$ where $U'$ is an open subset of $\A^1_L$ that does not contain any root of $xp'(x)p(x)$ except $\alpha.$ Take $U\subseteq\A^1_k$ to be the image of $U'.$ Then $tr_{L/k}(\Psi_0(\la\alpha\ra))=(U',xp'(x)p(x),pr_k)=(U,xp'(x)p(x),pr_k)=\la xp'(x)p(x)\ra - (\A^1_k\setminus Z(p), xp'(x)p(x),pr_k)\stackrel{\ref{deform0}}=\la x^{2l}\ra- (\A^1_k\setminus Z(p), xp'(x)p(x),pr_k).$ Since $k$ has no prime-to-$l$ extensions, all roots of $p'(x)$ are rational. So write $xp'(x)=l(x-\lambda_1)^r_1\ldots(x-\lambda_k)^{r_k}$. Then by~\ref{trcomp}
\[
\Phi_0(tr_{L/k}\Psi_0(\la\alpha\ra))=\la l\ra(2l)_{\epsilon}-\left(\sum_{i=1}^k(r_i)_{\epsilon}\la lp(\lambda_i)\lambda_i\prod_{j\neq i}(\lambda_i-\lambda_j)^{r_j}\ra\right).
\]
From the other hand, $Tr^L_k(\la\alpha\ra)=\tau^L_k(\alpha)(\la p'(\alpha)\alpha\ra)=-\partial_{\infty}^{-1/x}([xp'(x)p(x)]-\sum_{i=1}^k[(x-\lambda_i)^{r_i}lp(\lambda_i)\lambda_i\prod_{j\neq i}(\lambda_i-\lambda_j)^{r_j}])=\la l\ra(2l)_{\epsilon}-\left(\sum_{i=1}^k(r_i)_{\epsilon}\la lp(\lambda_i)\lambda_i\prod_{j\neq i}(\lambda_i-\lambda_j)^{r_j}\ra\right)$ by~\ref{trcomp}. Thus $\Phi_0(tr_{L/k}\Psi_0(\la\alpha\ra))=Tr^L_k(\alpha),$ so the diagram commutes.
\end{proof}

Further we will identify $GW(k)$ with $H_0(\ZZ F(\Delta^{\bullet}_k,\Spec k)).$ Then the multiplication structure of~\ref{MultStruct} endows $H_0(\ZZ F(\Delta^{\bullet}_k,\Gm^{\wedge *}))$ with the structure of $GW(k)$-module.

\begin{lem}\label{transfer1}
Suppose that $L$ is a degree $n$ extension of $k$. Then $\Phi_0(tr_{L/k}(\la x\ra))=n_{\epsilon}\in K_0^{MW}(k).$
\end{lem}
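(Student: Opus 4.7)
\emph{Plan.} Fix a primitive element $\alpha\in L$ with monic minimal polynomial $p\in k[x]$ of degree $n$. The strategy is to reduce $tr_{L/k}(\la x\ra)$ to a single explicit level-one correspondence supported on a closed point of $\A^1_k$, evaluate $\Phi_0$ via its Koszul-complex description, and conclude with the classical Euler identity for trace forms.

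First, unwind the transfer. By Remark~\ref{generator}, $c_{L/k}$ is represented by $(\A^1_L,x-\alpha,pr_L)$, so composing with $\la x\ra=(\A^1_L,x,pr_L)$ over $\Spec L$ and post-composing with $\Spec L\to\Spec k$ gives the level-two correspondence $(\A^2_L,(x_1-\alpha,x_2),pr_k)=\sigma_{\Spec k}(\A^1_L,x-\alpha,pr_k)$. In the colimit defining $\ZZ F$ one therefore obtains
\[
tr_{L/k}(\la x\ra)\sim(\A^1_L,x-\alpha,pr_k)\quad\text{in }H_0(\ZZ F(\Delta^\bullet_k,\Spec k)).
\]
Since $L=k[x]/(p)$, there is a unique $q\in k[x]$ with $\deg q<n$ and $q(\alpha)=1/p'(\alpha)$, and Lemma~\ref{red2} (with $\mu=1$, $\lambda=\alpha$) yields
\[
(\A^1_L,x-\alpha,pr_k)\sim(U,q(x)p(x),pr_k),\qquad U=\A^1_k\setminus Z(q),
\]
whose support is the single closed point $Z(p)\subset\A^1_k$ with residue field $L$: separability of $p$ gives $q(\alpha)=1/p'(\alpha)\neq 0$, so $Z(q)\cap Z(p)=\emptyset$.

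Now apply $\Phi_0$ as constructed in Section~\ref{phiconst}. The Koszul complex of the framing $q\cdot p$ at $Z(p)$ represents the element $\la q(\alpha)\ra=\la 1/p'(\alpha)\ra$ of $W(L)$; passing between the identification of $\omega_{Z/\A^1_k}$ via the uniformizer $p$ and via the canonical generator $dx$ rescales by the square $p'(\alpha)^2$, which does not affect the class in $W(L)$. Lemma~\ref{diag2} identifies the push-forward along $\PP^1_k\to\Spec k$ at such a closed point with the Scharlau transfer $Tr^L_k\colon W(L)\to W(k)$, so
\[
\Phi_0(tr_{L/k}(\la x\ra))=Tr^L_k\bigl(\la 1/p'(\alpha)\ra\bigr)\in\GW(k)=K_0^{MW}(k).
\]

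The proof is completed by Euler's classical trace-form identity. Writing $p(x)/(x-\alpha)=\sum_{i=0}^{n-1}q_i x^i\in L[x]$ with $q_{n-1}=1$, the family $\{q_i/p'(\alpha)\}$ is dual to $\{1,\alpha,\dots,\alpha^{n-1}\}$ under $Tr_{L/k}$; in particular $Tr_{L/k}(\alpha^i/p'(\alpha))=\delta_{i,n-1}$ for $0\leq i\leq n-1$. Hence the Gram matrix of $a\mapsto Tr_{L/k}(a^2/p'(\alpha))$ in the basis $\{\alpha^i\}$ is upper anti-triangular with $1$'s along the anti-diagonal, and symmetric Gram--Schmidt reduces it to $\mathbb{H}^{\lfloor n/2\rfloor}$ (plus an extra $\la 1\ra$ if $n$ is odd). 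Either way the class equals $n_\epsilon$ in $\GW(k)$, as required. The main technical obstacle is the push-forward comparison in the previous paragraph: one must check that, at a closed point with residue $L$, the Chow--Witt push-forward along $\PP^1_k\to\Spec k$ really does coincide with the Scharlau transfer in the orientation coming from $dx$. Once this is granted, everything else is formal manipulation plus the classical identity.
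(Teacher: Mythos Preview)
Your approach is genuinely different from the paper's, and the endpoint is right, but one step in the middle is misargued and another is left as an unverified assertion.

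\textbf{What the paper does.} The paper never invokes the Chow--Witt push-forward at a non-rational point. After reducing (via Lemma~\ref{squar} over $L$, which lets one replace $\la x\ra$ by $\la p'(\alpha)^2 x\ra$) to the correspondence $(\A^1_k\setminus Z(p'),\,p'(x)p(x),\,pr_k)$, it runs the Euclidean algorithm $r_0=p$, $r_1=p'$, $r_i=r_{i-2}\bmod r_{i-1}$. At each step one writes
\[
\Phi_0\bigl((\A^1\setminus Z(r_i),\,r_{i-1}r_i,\,pr_k)\bigr)=\Phi_0(\la r_{i-1}r_i\ra)-\Phi_0\bigl((\A^1\setminus Z(r_{i-1}),\,r_{i-1}r_i,\,pr_k)\bigr),
\]
uses Lemma~\ref{deform0} to get $\Phi_0(\la r_{i-1}r_i\ra)=(d_{i-1}+d_i)_\epsilon$, and uses Lemma~\ref{multiplication} to replace $r_{i-1}$ by $r_{i+1}$ on the complement of $Z(r_i)$. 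The alternating sum telescopes to $(d_0)_\epsilon=n_\epsilon$. Everything stays inside the elementary framed-correspondence calculus already set up; no input from quadratic-form theory beyond Proposition~\ref{m0} is needed.

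\textbf{Where your argument wobbles.} Your claim that ``passing between the identification of $\omega_{Z/\A^1_k}$ via the uniformizer $p$ and via $dx$ rescales by the square $p'(\alpha)^2$'' is not correct: since $dp=p'(\alpha)\,dx$ in $\Ms_Z/\Ms_Z^2$, the two trivializations differ by $p'(\alpha)$, not its square. With the $dx$-trivialization used in Section~\ref{phiconst}, the Koszul class of $q\cdot p$ at $Z(p)$ is $\la q(\alpha)p'(\alpha)\ra=\la 1\ra$, not $\la 1/p'(\alpha)\ra$. What saves you is that the push-forward you then apply is \emph{not} the Scharlau transfer for the field trace $Tr_{L/k}$: in the normalization of Section~\ref{phiconst} the push-forward at $Z(p)$ is the Scharlau transfer for the residue functional $a\mapsto Tr_{L/k}(a/p'(\alpha))$. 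Applying that to $\la 1\ra$ gives exactly the form $(a,b)\mapsto Tr_{L/k}(ab/p'(\alpha))$, and your Euler-identity computation of this form as $n_\epsilon$ is correct. So the two slips cancel, but as written the argument asserts the wrong intermediate value and then pairs it with the wrong transfer; you should fix one of the two conventions and justify the push-forward identification explicitly (you yourself flag this as the ``main technical obstacle'' but do not resolve it).

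\textbf{Trade-offs.} Your route is more conceptual and connects directly to the classical trace-form computation, at the cost of importing a nontrivial compatibility between Fasel's push-forward and the Scharlau transfer that the paper deliberately avoids. The paper's Euclidean-algorithm argument is more self-contained and needs only Lemmas~\ref{multiplication} and~\ref{deform0} plus the already-proved $\Phi_0(\la x^n\ra)=n_\epsilon$.
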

\begin{proof}
Take a generator $\alpha$ with minimal monic polynomial $p(x)\in k[x].$ Consider the Gaussian algorithm $p=r_0, p'=r_1, r_i=r_{i-2}\mod r_{i-1}.$ Let $d_i=\deg(r_i).$ Since $p$ and $p'$ are coprime, $d_N=0$ for some $N$. Then $\Phi_0(tr_{L/k}(\la x \ra))=\Phi_0((\A^1_k\setminus Z(p'),p'(x)p(x),pr_k))=\Phi_0(\la p'(x)p(x)\ra-(\A^1_k\setminus Z(p),p'(x)p(x),pr_k))\stackrel{\ref{multiplication},\ref{deform0}}=(d_0+d_1)_{\epsilon}-\Phi_0((\A^1_k\setminus Z(r_2),p'(x)r_2(x),pr_k))=(d_0+d_1)_{\epsilon}-((d_1+d_2)_{\epsilon}-\Phi_0((\A^1_k\setminus Z(r_3),r_2(x)r_3(x),pr_k)))=\ldots=\sum_{i=0}^N(-1)^i(d_i+d_{i+1})_{\epsilon}=(d_0)_{\epsilon}=n_{\epsilon}$.
\end{proof}

\section{Map from Milnor-Witt K-theory to Framed correspondences.}\label{ktocor}
In this section we extend $\Psi_0$ to a morphism $\Psi_*\colon K_*^{MW}(k)\to H_0(\ZZ F(\Delta^{\bullet}_k,\Gm^{\wedge *}))$ (see subsection~\ref{psiconst}).
Recall that Milnor-Witt $K$-theory Is defined using the generators $[a]$ of degree $1$ for $a\in k^{\times}$, $\eta$ of degree $-1$ and the relations of~\cite[Definition 2.1]{Morel}

\begin{ntt}
For a polynomial $p(x)\in k[x]$ we will denote by $[p(x)]$ the class of the correspondence $(\Gm,p(x),id)\in Fr_1(\Spec k,\Gm)$ in $H_0(\ZZ F(\Delta^{\bullet}_k,\Gm^{\wedge 1}))$
\end{ntt}
\subsection{Some relations in $H_0(\ZZ F(\Delta^{\bullet}_k,\Gm^{\wedge 1}))$}

\begin{rem}\label{onesupp}
By Lemma~\ref{anyf} for any correspondence $c=(U,\phi,f)\in Fr(k,\Gm)$ such that $f(Z)=\{1\}$ its class vanishes in $\ZZ F(k,\Gm^{\wedge 1})=\coker(\ZZ F(k,k)\stackrel{i_{1*}}\to\ZZ F(k,\Gm)).$

\end{rem}

\begin{lem}\label{hout}
$[(x-a)^2]=h[x-a]$ where $h\in GW(k)$ is the hyperbolic plane
\end{lem}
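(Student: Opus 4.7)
The plan is to establish $[(x-a)^2]=h\cdot[x-a]$ in $H_0(\ZZ F(\Delta^{\bullet}_k,\Gm))$ (which then descends to $H_0(\ZZ F(\Delta^{\bullet}_k,\Gm^{\wedge 1}))$) by replacing the identity map $\Gm\to\Gm$ in the representative $(\Gm,(x-a)^2,id_\Gm)$ with the constant map at $a$, and then factoring the resulting correspondence through $GW(k)$ via the composition $Fr_1(k,k)\times Fr_0(k,\Gm)\to Fr_1(k,\Gm)$.

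To carry this out, I first apply Lemma~\ref{anyf} with $Y=\Gm$, $U=\Gm$, $f=id_\Gm$ and $f'=c_a$ the constant map sending $\Gm$ to $a$. These agree on $Z=\{a\}$, and the straight-line homotopy $F_t(x)=tx+(1-t)a$ satisfies $F_t(a)=a\in\Gm$ for every $t$, so $Z\times\A^1\subset W=F^{-1}(\Gm)$ and the lemma applies. Thus $[(x-a)^2]=[(\Gm,(x-a)^2,c_a)]$ in $H_0(\ZZ F(\Delta^{\bullet}_k,\Gm))$. By the composition formula of Section~\ref{MultStruct}, $(\Gm,(x-a)^2,c_a)$ is precisely $(\Gm,(x-a)^2,pr_k)\in Fr_1(k,k)$ composed with the point $[a]=(\Spec k,(),a)\in Fr_0(k,\Gm)$, so $[(x-a)^2]=[(\Gm,(x-a)^2,pr_k)]\cdot[a]$ in $H_0(\ZZ F(\Delta^{\bullet}_k,\Gm))$.

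Next I identify the $GW(k)$-factor. As explicit correspondences, $(\Gm,(x-a)^2,pr_k)$ and $(\A^1,(x-a)^2,pr_k)$ agree on a common neighborhood of $a$, so their classes in $GW(k)=H_0(\ZZ F(\Delta^{\bullet}_k,\Spec k))$ coincide. The latter is $\la(x-a)^2\ra$, which by Lemma~\ref{deform0} equals $\la x^2\ra$, and by the Remark following Lemma~\ref{trcomp} we have $\la x^2\ra=2_\epsilon=\la 1\ra+\la -1\ra=h$. Separately, in $H_0(\ZZ F(\Delta^{\bullet}_k,\Gm))$ the level-zero class $[a]$ agrees with $[x-a]$: stabilization gives $\sigma_\Gm([a])=(\A^1,y,c_a)$, and another application of Lemma~\ref{anyf} (replacing $c_a$ by $y\mapsto y+a$, which agree at $y=0$ with common value $a$, and whose straight-line homotopy stays in $\Gm$ on the support) identifies it with $(\Gm,x-a,id)=[x-a]$.

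Combining these steps yields $[(x-a)^2]=h\cdot[x-a]$ in $H_0(\ZZ F(\Delta^{\bullet}_k,\Gm))$, hence in the quotient $H_0(\ZZ F(\Delta^{\bullet}_k,\Gm^{\wedge 1}))$ as required. The only nontrivial point is to verify that the homotopies used in Lemma~\ref{anyf} remain inside $\Gm$ on the support, but since each support is a single rational point with nonzero image in $\Gm$ this is immediate.
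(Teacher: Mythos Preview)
Your approach is correct in outline and genuinely different from the paper's. The paper works at level~$2$: it stabilizes $(\Gm,(x-a)^2,id)$ to $(\Gm\times\A^1,((x_1-a)^2,x_2),pr_{\Gm})$, uses Lemma~\ref{addone} to replace $x_2$ by $x_2-(x_1-a)$, then Lemma~\ref{nilpotent} to swap $(x_1-a)^2$ for $x_2^2$ (since $x_1-a\equiv x_2$ modulo the second equation), and finally Lemma~\ref{SL} to reorder, arriving at the product $(\Gm,x_1-a,id)\cdot(\A^1,x^2,pr_k)$. Your route via Lemma~\ref{anyf} is more conceptual: you factor the correspondence through the constant map, recognize the composition as a $GW(k)$-scalar times the level-zero point $[a]$, and then identify $[a]$ with $[x-a]$. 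This avoids the level-raising manipulations entirely and makes the $GW(k)$-module structure do the work.

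There is one small gap in your last step. After applying Lemma~\ref{anyf} to $\sigma_{\Gm}([a])=(\A^1,y,c_a)$ you obtain (on the shrunken neighborhood $\A^1\setminus\{-a\}$) the correspondence $(\A^1\setminus\{-a\},y,y+a)$, whose support in $\A^1$ is the point $0$. The correspondence $(\Gm,x-a,id)$ has support $a$. Since equivalence of explicit framed correspondences requires equal supports in $\A^n_X$, these are not the same element of $Fr_1(k,\Gm)$, and your change of variable $x=y+a$ alters the \'etale map $\alpha$ rather than giving an equivalence. The cleanest fix is to run the argument in the other direction: apply Lemma~\ref{anyf} to $(\Gm,x-a,id)$ to replace $id$ by $c_a$, obtaining $(\Gm,x-a,c_a)=\langle x-a\rangle\cdot[a]$; then $\langle x-a\rangle=\langle x\rangle$ by Lemma~\ref{deform0}, and $\langle x\rangle$ is the stabilization $\sigma_k$, hence acts as the identity in the colimit. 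Alternatively, insert the translation homotopy $(\A^1\times\A^1,\,y-ta,\,c_a)\in Fr_1(\A^1,\Gm)$ before your application of Lemma~\ref{anyf}. Either way the identification $[a]=[x-a]$ holds and your proof goes through. (Minor citation note: the identity $\langle x^2\rangle=h$ is the Remark immediately \emph{preceding} Lemma~\ref{trcomp}, not following it.)
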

\begin{proof}
$(\Gm,(x-a)^2,id)\sim (\Gm\times\A^1,((x_1-a)^2,x_2),pr_{\Gm})\sim(\Gm\times\A^1,((x_1-a)^2,x_2-(x_1-a)),pr_{\Gm})\stackrel{\ref{nilpotent}}\sim
(\Gm\times\A^1,(x_2^2,x_2-(x_1-a)),pr_{\Gm})\sim (\Gm\times\A^1,(x_2^2,-(x_1-a)),pr_{\Gm})\stackrel{\ref{SL}}\sim (\Gm\times\A^1,((x_1-a),x_2^2),pr_{\Gm})=(\Gm,x_1-a,id_{\Gm})\cdot(\A^1,x^2,pr_k).$ Since $(\A^1,x^2,pr_k)$ in $H_0(\ZZ F(\Delta^{\bullet}_k,\Spec k))$ is equivalent to the sum $\langle x\rangle+\la -x\ra$, it is mapped to $h$ in $GW(k).$
\end{proof}

\begin{lem}\label{square}
The following equality holds in $H_0(\ZZ F(\Delta^{\bullet}_k,\Gm^{\wedge 1}))$:
\[[x-a^2b]=[x-b]+h[x-a]\text{ where $h\in GW(k)$ is the hyperbolic plane}\]
\end{lem}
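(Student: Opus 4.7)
The plan is to realize both sides of the claimed identity as specializations of a single degree-three polynomial via lemma~\ref{deform}, and then to compute that common class by two different decompositions into local pieces, each of which is reduced by the standard stabilization/SL machinery.

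Concretely, I would consider the two monic cubic polynomials
\[P(x)=(x-b)(x-a)^2\qquad\text{and}\qquad Q(x)=(x-1)^2(x-a^2b),\]
which satisfy $P(0)=Q(0)=-a^2b\neq 0$. Lemma~\ref{deform} (applied with $d=0$) then gives the identification $[P(x)]=[Q(x)]$ in $H_0(\ZZ F(\Delta^{\bullet}_k,\Gm^{\wedge 1}))$. The proof is then reduced to evaluating each side.

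For $[P(x)]$, assume $a\neq b$ and decompose across the two support points. Near $b$, one writes $(x-a)^2=(a-b)^2(1+m_b)$ with $m_b$ in the maximal ideal at $b$; lemma~\ref{multiplication} then replaces the local framing by $(a-b)^2(x-b)$. Stabilizing to level two and applying lemma~\ref{SL} with the matrix $\operatorname{diag}((a-b)^{-2},(a-b)^2)\in\gSL_2(k)$ transfers the scalar to the auxiliary coordinate, exhibiting the piece as the product $[x-b]\cdot\la(a-b)^2\ra=[x-b]$, using $\la u^2\ra=\la 1\ra$ in $GW(k)$. The analogous procedure near $a$ yields $[(x-a)^2]\cdot\la a-b\ra$, which by lemma~\ref{hout} and the Witt identity $h\la u\ra=h$ simplifies to $h[x-a]$. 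Summing the two local pieces gives $[P(x)]=[x-b]+h[x-a]$.

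For $[Q(x)]$, assume $a^2b\neq 1$ and decompose at $\{1,a^2b\}$ by the same recipe. The piece at $1$ becomes $h[x-1]\cdot\la 1-a^2b\ra=h[x-1]$, and the piece at $a^2b$ becomes $[x-a^2b]$. Now $[x-1]$ vanishes in $H_0(\ZZ F(\Delta^\bullet_k,\Gm^{\wedge 1}))$: by lemma~\ref{anyf} the correspondence $(\Gm,x-1,id)$ is equivalent to $(\Gm,x-1,\mathrm{const}_1)$ since the two maps agree on the support $\{1\}$, and the latter lies in the image of $f_{1*}\colon\ZZ F(\Delta^\bullet_k,\Spec k)\to\ZZ F(\Delta^\bullet_k,\Gm)$. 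Thus $[Q(x)]=[x-a^2b]$, and combining with the previous step gives the desired $[x-a^2b]=[x-b]+h[x-a]$. The remaining degenerate cases fit into the same picture: if $a^2b=1$ then $Q(x)=(x-1)^3$ has support entirely at the basepoint, so the same lemma~\ref{anyf} argument forces $[Q(x)]=0=[x-1]=[x-a^2b]$. The coincidence $a=b$ is handled by a direct stabilization/nilpotent manipulation in the style of lemma~\ref{hout}: using lemma~\ref{nilpotent} with the auxiliary framing $x_2-(x_1-a)$ one replaces $(x_1-a)^3$ by $x_2^3$ and then by SL-manipulations arrives at $[(x-a)^3]=[x-a]\cdot\la x^3\ra=[x-a]\cdot(1+h)=[x-a]+h[x-a]=[x-a^3]$.

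The main obstacle is the bookkeeping in the local SL-step: one must correctly stabilize each single-point piece, identify the scalar factor that appears in the diagonal determinant-one matrix, and then simplify it inside $GW(k)$ via $\la u^2\ra=\la 1\ra$ and $h\la u\ra=h$ to arrive at the clean coefficients $1$ and $h$. All other manipulations are routine applications of lemmas~\ref{multiplication}, \ref{SL}, \ref{hout}, \ref{anyf} and \ref{deform}.
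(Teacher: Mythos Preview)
Your proposal is correct and follows essentially the same route as the paper: the same pair of cubics $(x-a)^2(x-b)$ and $(x-1)^2(x-a^2b)$, the same appeal to Lemma~\ref{deform} via the common constant term $-a^2b$, and the same local decomposition using Lemmas~\ref{multiplication} and~\ref{hout} together with Remark~\ref{onesupp} (your Lemma~\ref{anyf} argument for $[x-1]=0$ is exactly that remark). You are somewhat more explicit about the $\gSL_2$ step that identifies the scalar coefficients with elements of $GW(k)$, and you take extra care with the degenerate cases $a^2b=1$ and $a=b$, which the paper leaves implicit.
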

\begin{proof}
By Lemma~\ref{deform}  we have that $[(x-1)^2(x-a^2b)]=[(x-a)^2(x-b)]$.
By~\ref{multiplication} and Remark~\ref{onesupp} we have $[(x-1)^2(x-a^2b)]=\la (a^2b-1)^2\ra [x-a^2b]=[x-a^2b].$

For the right hand side by~\ref{hout} we have $[(x-a)^2(x-b)]=\la a-b\ra[(x-a)^2]+\la(b-a)^2\ra[x-b]=\la a-b\ra h[x-a]+[x-b]=h[x-a]+[x-b].$
\end{proof}

\begin{lem}\label{K1multiplication}
The following equality holds in $H_0(\ZZ F(\Delta^{\bullet}_k,\Gm^{\wedge 1}))$ for $a,b\in k^{\times}\setminus 1$ and $ab\neq 1$
\[\la ab-1\ra[x-ab]=\la a-1\ra[x-a]+\la ab-a\ra[x-b]\]
\end{lem}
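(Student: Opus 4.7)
The strategy is to find a cubic polynomial identity that produces the coefficients $\la a-1\ra$ and $\la ab-a\ra$ (up to a common invertible factor). Set
\[p(x):=(x-1)(x-a)(x-ab),\qquad q(x):=(x-a)^2(x-b).\]
Both are degree-three polynomials with leading term $x^3$ and the same nonzero constant term $p(0)=q(0)=-a^2b$, so Lemma~\ref{deform} (with $d=0$) gives $[p(x)]=[q(x)]$ in $H_0(\ZZ F(\Delta^\bullet_k,\Gm))$, hence also in $H_0(\ZZ F(\Delta^\bullet_k,\Gm^{\wedge 1}))$.

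Next I would split both classes into their local contributions at each root, using Lemma~\ref{multiplication} to absorb the factors coming from the other roots. For $[p(x)]$ the simple root at $x=1$ gives $\la(1-a)(1-ab)\ra[x-1]$, which vanishes in $\Gm^{\wedge 1}$ by Remark~\ref{onesupp}; the roots at $x=a$ and $x=ab$ yield $\la -a(a-1)(b-1)\ra[x-a]$ and $\la a(ab-1)(b-1)\ra[x-ab]$ respectively. For $[q(x)]$, repeating the computation from the proof of Lemma~\ref{square} (Lemmas~\ref{multiplication}, \ref{SL}, \ref{hout}, together with the identity $\la u\ra h=h$ in $GW(k)$), the double root at $x=a$ contributes $h[x-a]$ and the simple root at $x=b$ contributes $[x-b]$.

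Equating the two decompositions gives
\[\la -a(a-1)(b-1)\ra[x-a]+\la a(ab-1)(b-1)\ra[x-ab]=h[x-a]+[x-b].\]
The $GW(k)$-identity $\la u\ra+\la -u\ra=h$ rewrites $h-\la -u\ra=\la u\ra$; applied with $u=a(a-1)(b-1)$ this simplifies the equality to
\[\la a(ab-1)(b-1)\ra[x-ab]=\la a(a-1)(b-1)\ra[x-a]+[x-b].\]
Finally, multiplying both sides by the unit $\la a(b-1)\ra$ and collapsing squares of the form $\la c^2\ra=1$ reduces each coefficient to its desired value and produces exactly $\la ab-1\ra[x-ab]=\la a-1\ra[x-a]+\la ab-a\ra[x-b]$.

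The main obstacle is locating the right deformation: one has to spot that the two cubics $(x-1)(x-a)(x-ab)$ and $(x-a)^2(x-b)$ share the necessary leading and constant terms while carrying precisely the roots needed for the desired identity. Once that choice is made, the remainder is a direct bookkeeping exercise with the preliminary lemmas of Section~\ref{preliminary_lemmas}.
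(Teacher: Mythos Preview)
Your proof is correct and follows the same general strategy as the paper---deform one cubic on $\Gm$ into another via Lemma~\ref{deform} and then decompose each side root by root---but your choice of cubics is different and more direct. The paper first applies Lemma~\ref{deform} to $(x-1)(x-a')(x-b')\sim(x-1)^2(x-a'b')$ to obtain the auxiliary identity $\la a'-1\ra[x-a']+\la 1-b'\ra[x-b']=\la a'-b'\ra[x-a'b']$, then substitutes $a'=a,\ b'=ab$ and invokes Lemma~\ref{square} (itself proved by a second deformation $(x-1)^2(x-a^2b)\sim(x-a)^2(x-b)$) to eliminate the resulting $[x-a^2b]$. Your single deformation $(x-1)(x-a)(x-ab)\sim(x-a)^2(x-b)$ effectively composes these two steps into one, which is why you only need Lemma~\ref{hout} rather than the full statement of Lemma~\ref{square}. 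The paper's detour has the side benefit of recording the intermediate identity above (used later in the proof of the Steinberg relation), but as a proof of Lemma~\ref{K1multiplication} alone your route is shorter. Note that both arguments tacitly require $a\neq b$ when splitting $(x-a)^2(x-b)$; this case is not excluded by the hypotheses but is easily handled separately (or by the paper's auxiliary identity with $a'=b'=a$ avoided).
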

\begin{proof} For any $a',b'\in k^{\times}$, $a'\neq b'$ consider a polynomial $p(x)=1/(a'-b')(x-1)(x-a')(x-b')$. Then $[p(x)]=\la a'-1\ra[x-a']+\la 1-b'\ra[x-b'].$ By lemma~\ref{deform} we have
\[[p(x)]=[1/(a'-b')(x-1)^2(x-a'b')]=\la 1/(a'-b')(a'b'-1)^2\ra[x-a'b']=\la a'-b'\ra[x-a'b'].\]
Thus we get the equality
\[\la a'-1\ra[x-a']+\la 1-b'\ra[x-b']=\la a'-b'\ra[x-a'b']\]
For $a'=a$ and $b'=ab$ we get $\la a-1\ra[x-a]+\la 1-ab\ra[x-ab]=\la a-ab\ra[x-a^2b]$.
By~\ref{square} this implies
$\la a-1\ra[x-a]+\la 1-ab\ra[x-ab]=\la a-ab\ra[x-b]+h[x-a]$. Multiplying by $\la -1\ra$ we get
$\la ab-a\ra[x-b]+(h-\la 1-a\ra)[x-a]=\la ab-1\ra[x-ab]$.
Since $h-\la 1-a\ra=\la a-1\ra$ we get the needed equality.
\end{proof}

\subsection{The Steinberg relation}

\begin{lem}\label{tracegen}
Let $L=k(\alpha)$ and $p(x)$ be the unital minimal polynomial of $\alpha$. Let $n=\deg p.$
Then $tr_{L/k}(\la p'(\alpha)\ra[x-\alpha])=\la (N(\alpha)-1)^{n-1}\ra[x-N(\alpha)].$
\end{lem}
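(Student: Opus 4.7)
The plan is to compute $tr_{L/k}(\la p'(\alpha)\ra[x-\alpha])$ explicitly as a composition with $c_{L/k}$ and then successively apply the reduction lemmas of Section~\ref{preliminary_lemmas} to transform it into the desired form. By Remark~\ref{generator} I represent $c_{L/k}=(\A^1_L,\,x_0-\alpha,\,pr_L)$, so the composition is the level-three correspondence
\[
(\A^1_L\times_L\A^1_L\times_L\Gm_L,\ (x_0-\alpha,\,p'(\alpha)y,\,x-\alpha),\ pr_\Gm)\in Fr_3(\Spec k,\Gm).
\]
Since $\alpha\in L=k(Z)$ and the total scheme $\A^1_k\times_k(\A^1_L\times_L\Gm_L)$ has a projection to $\Spec L$, Lemma~\ref{minuslambda} applied in reverse eliminates the framing $x_0-\alpha$ and reduces to the level-two correspondence $(\A^1_L\times_L\Gm_L,\,(p'(\alpha)y,\,x-\alpha),\,pr_\Gm)$ in $Fr_2(\Spec k,\Gm)$.

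The key reduction step is to turn the framing into one defined over $k$. Writing $p(x)=p'(\alpha)(x-\alpha)+(x-\alpha)^2 r(x)$ one has $p(x)/p'(\alpha)=(x-\alpha)(1+m)$ with $m\in\Ms_{(0,\alpha)}$, so Lemma~\ref{multiplication} (applied to the second framing after permuting to the first slot via Lemma~\ref{SL}) replaces $x-\alpha$ by $p(x)/p'(\alpha)$ on an open neighbourhood $U'$ of $(0,\alpha)$. A diagonal $\operatorname{diag}(p'(\alpha)^{-1},\,p'(\alpha))\in\gSL_2(L)$-move, again via Lemma~\ref{SL}, yields $(U',\,(y,\,p(x)),\,pr_\Gm)$. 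Both the framing and the target map are now defined over $k$, and the étale neighbourhood $U'\subseteq\A^1_L\times_L\Gm_L\to\A^2_k$ and the open inclusion $\A^1_k\times_k\Gm_k\hookrightarrow\A^2_k$ are both étale neighbourhoods of the same closed point $(0,*)\in\A^2_k$ of residue field $L$; inspection of the fibre product identifies $(U',\,(y,\,p(x)),\,pr_\Gm)$ with $(\A^1_k\times_k\Gm_k,\,(y,\,p(x)),\,pr_\Gm)$ as framed correspondences. Combined with the swap lemma and the identification of a correspondence with its $\sigma_\Gm$-stabilisation in the colimit $\ZZ F$, this shows $tr_{L/k}(\la p'(\alpha)\ra[x-\alpha])=[p(x)]$ in $H_0(\ZZ F(\Delta^\bullet_k,\Gm))$.

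Finally, I pass to $\Gm^{\wedge 1}$. Since $p(x)$ and $(x-1)^{n-1}(x-N(\alpha))$ have the same leading term $x^n$ and the same value $(-1)^n N(\alpha)\neq 0$ at zero, Lemma~\ref{deform} gives $[p(x)]=[(x-1)^{n-1}(x-N(\alpha))]$. This decomposes according to its two rational support points: at $x=1$ one has $f(Z)=1$, so the contribution vanishes in $\Gm^{\wedge 1}$ by Remark~\ref{onesupp}, while at $x=N(\alpha)$ the invertible factor $(x-1)^{n-1}$ is replaced by the constant $(N(\alpha)-1)^{n-1}$ via Lemma~\ref{multiplication}. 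The resulting class $[(N(\alpha)-1)^{n-1}(x-N(\alpha))]$ coincides with $\la(N(\alpha)-1)^{n-1}\ra[x-N(\alpha)]$ by one more application of the SL-plus-swap argument that identifies $\la c\ra[x-a]$ with $[c(x-a)]$ in the stabilised colimit, which completes the proof.

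The main obstacle I expect is the bookkeeping in the étale-neighbourhood descent step: carefully verifying that $(U',\,(y,\,p(x)),\,pr_\Gm)$ (living over $L$) and $(\A^1_k\times_k\Gm_k,\,(y,\,p(x)),\,pr_\Gm)$ are equivalent under the equivalence relation defining framed correspondences. This hinges on both being étale neighbourhoods of the same closed point $(0,*)\in\A^2_k$ of residue field $L$ and on keeping track of the fibre-product condition on the framings and maps, after which the remaining reductions follow from direct applications of the preliminary lemmas.
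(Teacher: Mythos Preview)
Your proposal is correct and follows essentially the same route as the paper's proof; you have simply unpacked what the paper compresses into two lines. The paper represents $\la p'(\alpha)\ra[x-\alpha]$ directly at level one as $((\Gm)_L,\,p'(\alpha)(x-\alpha),\,pr_{\Gm})$ (your steps~1--3 plus the final ``$\la c\ra[x-a]=[c(x-a)]$'' identification), then invokes Lemma~\ref{multiplication} once to pass to $(\Gm_k,\,p(x),\,id)$ (your steps~4--6, including the \'etale-neighbourhood descent you rightly flag as the delicate point), and the remaining deform-and-split via Lemma~\ref{deform} and Remark~\ref{onesupp} is identical. One small omission: when $N(\alpha)=1$ the two support points coalesce and both sides vanish in $\Gm^{\wedge 1}$; the paper notes this case separately.
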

\begin{proof}
We have that $tr_{L/k}(\la p'(\alpha)\ra[x-\alpha])\sim((\Gm)_L,p'(\alpha)(x-\alpha),pr_{\Gm}).$ Then by~\ref{multiplication} it is equivalent to $(\Gm,p(x),pr_{\Gm})\stackrel{\ref{deform}}\sim(\Gm,(x-1)^{n-1}(x-(-1)^np(0)))=\la ((-1)^np(0)-1)^{n-1}\ra[x-(-1)^np(0)]$ if $(-1)^np(0)\neq 1$ and $0$ otherwise. The lemma follows since $N(\alpha)=(-1)^np(0).$
\end{proof}

\begin{lem}\label{ptrace}
Let $l$ be a prime number and $a\in k^{\times}\setminus k^{\times l}$ and $L=k(\sqrt[l]{a}).$
Then $tr_{L/k}([x-(1-\sqrt[l]{a})])=\langle l\rangle[x-(1-a)]$ and $tr_{L/k}([x-\sqrt[l]{a}])=\la l\ra[x-a]$in $H_0(\ZZ F(\Delta^{\bullet}_k,\Gm^{\wedge 1})).$
\end{lem}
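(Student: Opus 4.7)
The plan is to deduce both identities from Lemma~\ref{tracegen} applied to a well-chosen generator of $L/k$, and then use the projection formula (Lemma~\ref{projection}) together with the identity $\la l\ra^{2}=\la l^{2}\ra=\la 1\ra$ in $GW(k)$ to strip off a factor of $\la l\ra$. The whole argument hinges on the observation that, for odd $l$, both the derivative $p'(\alpha)$ at the chosen generator and the norm-adjustment $(N(\alpha)-1)^{l-1}$ appearing in Lemma~\ref{tracegen} differ from $l$ and from $1$ (respectively) only by squares, so they collapse inside $GW(L)$ and $GW(k)$.

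For the second identity I would take $\alpha=\sqrt[l]{a}$, with minimal polynomial $p(x)=x^{l}-a$, derivative $p'(\alpha)=l\alpha^{l-1}$ and norm $N(\alpha)=(-1)^{l+1}a$. Assuming $l$ is an odd prime, $(-1)^{l+1}=1$ and $l-1$ is even; hence $\alpha^{l-1}=(\alpha^{(l-1)/2})^{2}$ is a square in $L^{\times}$ and $(a-1)^{l-1}$ is a square in $k^{\times}$, so Lemma~\ref{tracegen} reduces to
\[
tr_{L/k}(\la l\ra\cdot[x-\alpha])\;=\;[x-a].
\]
Since $\la l\ra=p^{*}\la l\ra\in GW(L)$, the projection formula rewrites the left-hand side as $\la l\ra\cdot tr_{L/k}([x-\alpha])$; multiplying both sides by $\la l\ra$ and using $\la l\ra^{2}=\la 1\ra$ yields the desired $tr_{L/k}([x-\alpha])=\la l\ra[x-a]$. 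For the first identity I would take $\alpha=1-\sqrt[l]{a}$, with minimal polynomial $p(x)=(x-1)^{l}+(-1)^{l+1}a$, derivative $p'(\alpha)=l(-\sqrt[l]{a})^{l-1}$ and norm $N(\alpha)=1-a$; the same square-cancellation (now using that both $(-1)^{l-1}$ and $(-a)^{l-1}$ are squares for odd $l$) together with the projection formula gives $tr_{L/k}([x-(1-\sqrt[l]{a})])=\la l\ra[x-(1-a)]$.

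The principal obstacle is the case $l=2$, where $\alpha^{l-1}=\sqrt{a}$ and $(N(\alpha)-1)^{l-1}=-a$ (respectively $-(a+1)$) are not squares, so $\la p'(\alpha)\ra\in GW(L)$ cannot be reduced to $p^{*}\la l\ra$ and the direct projection-formula argument breaks down. For this case I would unravel $tr_{L/k}([x-\sqrt{a}])$ directly as the level-two correspondence $((\A^{1}\times\Gm)_{L},(x_{1}-\sqrt{a},x_{2}-\sqrt{a}),pr_{\Gm})$, apply Lemma~\ref{SL} with a matrix in $\gSL_{2}(L)$ to replace the framing by $(x_{1}-x_{2},x_{2}-\sqrt{a})$, use Lemma~\ref{minuslambda} to collapse the level back to one over $k$, and finally combine Lemmas~\ref{red1}, \ref{red2}, \ref{multiplication} and \ref{deform} to identify the resulting correspondence on $\Gm_{k}$ with $\la 2\ra[x-a]$; the analogous unraveling handles the first identity for $l=2$.
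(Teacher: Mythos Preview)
Your odd-prime case is correct and is exactly the paper's argument: apply Lemma~\ref{tracegen} to $\alpha=\sqrt[l]{a}$ (resp.\ $1-\sqrt[l]{a}$), observe that $l-1$ even forces both $\la p'(\alpha)\ra=\la l\ra$ in $GW(L)$ and $\la(N(\alpha)-1)^{l-1}\ra=1$ in $GW(k)$, then peel off $\la l\ra$ via the projection formula and $\la l\ra^{2}=1$. The paper is terser but does the same thing.

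The gap is your $l=2$ paragraph. The manipulation you propose---an $\gSL_{2}$ move to the framing $(x_{1}-x_{2},\,x_{2}-\sqrt{a})$ followed by Lemma~\ref{minuslambda}/\ref{addone}---just reproduces the level-one description $((\Gm)_{L},\,x-\sqrt{a},\,id)$ of $tr_{L/k}([x-\sqrt{a}])$ viewed as an element of $Fr_{1}(k,\Gm)$; that is no progress, it is the object you started with. The subsequent appeal to Lemmas~\ref{red1}, \ref{red2}, \ref{multiplication}, \ref{deform} hides the actual difficulty: to pass from the $L$-framing $x-\sqrt{a}$ to a $k$-polynomial via Lemma~\ref{multiplication} you must multiply by something whose value at $\sqrt{a}$ is a \emph{square in $L$}, and the naive choice (the minimal polynomial $x^{2}-a$, with $p'(\sqrt{a})=2\sqrt{a}$) fails this. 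The paper resolves this by inserting an extra factor supported away from the point of interest: for the second identity it takes $f(x)=2x^{3}(x^{2}-a)$, so that $f'(\sqrt{a})=4a^{2}$ is a square in $L$ and Lemma~\ref{multiplication} yields $tr_{L/k}([x-\sqrt{a}])=[f(x)]$ in $H_0(\ZZ F(\Delta^{\bullet}_k,\Gm^{\wedge 1}))$; Lemma~\ref{deform} then moves the roots of $x^{2}-a$ to rational points. For the first identity the analogous choice is $f(x)=-2(1-x)\bigl((1-x)^{2}-a\bigr)$, whose derivative at $1-\sqrt{a}$ is $4a$, again a square in $L$. Your sketch does not supply this polynomial (or any substitute for it), and without it the $l=2$ case does not go through.
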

\begin{proof}
When $l$ is odd the lemma follows from~\ref{tracegen} applied to the polynomial $p(x)=x^l-a$. Then $p'(x)=lx^{l-1}$ and since $l-1$ is even, $\la p'(\alpha)\ra=\la l \ra$
and $\la(N(\alpha)-1)^{l-1}\ra=1.$

When $l=2$ take $f(x)=(-2)(1-x)((1-x)^2-a)$. Then $f'(1-\sqrt{a})=4a^2$. Let $U=(\Gm)_L\setminus \{1\}.$ Then $(U,f(x),pr_{\Gm})=tr_{L/k}([x-(1-\sqrt{a}])).$ Then
$[f(x)]=tr_{L/k}([x-(1-\sqrt{a})])$ in $H_0(\ZZ F(\Delta^{\bullet}_k,\Gm^{\wedge 1}))$. Since $[f(x)]=[-2(1-x)^{4}((1-x)-a)]=\langle 2\rangle[x-(1-a)]$, we get the statement of the lemma. The proof of the second statement goes analogously for $f(x)=px^{p+1}(x^p-a).$
\end{proof}

\begin{lem}\label{12}
Let $n$ be an integer and suppose that $nh[x-a][x-(1-a)]=0$ in $H_0(\ZZ F(\Delta^{\bullet}_L,\Gm^{\wedge 2}))$ for all field extensions $L/k$ and $a\in L^{\times}, a\neq 1.$
Then $h[x-a][x-(1-a)]=0$ in $H_0(\ZZ F(\Delta^{\bullet}_k,\Gm^{\wedge 2})).$
\end{lem}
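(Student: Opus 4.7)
The plan is to fix $a \in k^\times \setminus \{1\}$, set $Y = [x-a][x-(1-a)] \in H_0(\ZZ F(\Delta^{\bullet}_k, \Gm^{\wedge 2}))$, and aim to show $hY = 0$. The hypothesis applied to $L = k$ already furnishes the relation $nhY = 0$, so the goal is to produce a second relation of the form $m \cdot hY = 0$ with $\gcd(m,n) = 1$; B\'ezout will then force $hY = 0$.

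First, I would record the following consequence of the projection formula (Lemma~\ref{projection}) together with the transfer computation (Lemma~\ref{transfer1}): for any finite extension $L/k$ of degree $m$ and any class $X \in H_0(\ZZ F(\Delta^{\bullet}_k, \Gm^{\wedge *}))$, one has $tr_{L/k}(p^{*}X) = m_{\epsilon}\cdot X$. Because $\la u \ra \cdot h = h$ for every $u \in k^{\times}$, one gets the simplification $m_{\epsilon}\cdot hY = m\cdot hY$ in $GW(k)$-module structure. Hence whenever $hY|_{L} = 0$ for some $L/k$ of degree $m$, we recover $m\cdot hY = 0$ over $k$.

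Next I would reduce by induction on the number of prime factors of $n$ to the case $n = l^{r}$ for a prime $l$. For each such $l$, the strategy is to pass to the maximal prime-to-$l$ extension $k^{(l)}$; by a Lemma~\ref{com0}-style compatibility of transfer with $\Psi_{*}$, combined with the degree formula above, it suffices to prove $hY = 0$ over $k^{(l)}$. Over $k^{(l)}$, every finite extension has $l$-power degree and every element of $(k^{(l)})^{\times}$ admits a $q$-th root for every $q$ coprime to $l$. The plan is then to invoke the explicit transfer formulas of Lemma~\ref{ptrace} for root extensions $L = k^{(l)}(\sqrt[l]{a})$ and $L = k^{(l)}(\sqrt[l]{1-a})$, which together with the projection formula and the $K_{1}$-relations of Lemmas~\ref{square} and~\ref{K1multiplication} should rewrite $hY$ over $k^{(l)}$ as a transfer of a class that vanishes by the hypothesis.

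The main obstacle I anticipate is the prime $l = 2$. Since $h = 2_{\epsilon}$ is itself the transfer-pullback composition of a quadratic extension, the norm trick above becomes self-referential, and the passage through $k^{(2)}$ alone does not give new information. Handling this case will rely on the explicit quadratic substitution identities: using $[x - c^{2}] = h[x-c]$ from Lemma~\ref{square} one can rewrite $h[x-a][x-(1-a)]$, after adjoining $\sqrt{a}$ or $\sqrt{1-a}$, as a multiple of an expression whose vanishing is an instance of the hypothesis applied over the quadratic extension. Combining this with the induction on prime-power part and the universally-quantified hypothesis completes the reduction.
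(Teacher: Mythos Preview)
Your plan takes a long detour around a two-line argument. The paper proceeds by direct descent on $n$: write $n=mp$ with $p$ prime, set $b=a^{1/p}$, and over $k(b)$ combine the hypothesis $0=mph[x-b][x-(1-b)]$ with the identity $p\cdot h[x-b]=h[x-b^{p}]=h[x-a]$ (which follows from Lemma~\ref{K1multiplication} after multiplying by $h$) to obtain $0=mh[x-a][x-(1-b)]$. Applying $tr_{k(b)/k}$, the projection formula, and Lemma~\ref{ptrace} gives $0=mh[x-a]\cdot\langle p\rangle[x-(1-a)]=mh[x-a][x-(1-a)]$ over $k$, since $\langle p\rangle h=h$. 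Iterating strips all prime factors from $n$. No B\'ezout, no prime-to-$l$ closure, no case distinction at $l=2$.

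Your scheme has two genuine gaps. First, the ``reduction by induction on the number of prime factors of $n$ to the case $n=l^{r}$'' is never explained, and it is not clear what inductive hypothesis would let you replace the given $n$ by a prime power. Second, the passage to the maximal prime-to-$l$ extension $k^{(l)}$ is infinite, so your degree formula $tr_{L/k}\circ p^{*}=m_{\epsilon}$ does not apply; you would need to argue that $hY$ already vanishes over some finite prime-to-$l$ subextension, and you give no reason for that beyond a promise to ``rewrite $hY$ as a transfer of a class that vanishes by the hypothesis.'' But the only way to carry that out is exactly the absorption trick $p\cdot h[x-b]=h[x-a]$ above, which works just as well over $k$ itself---so the entire $k^{(l)}$ apparatus is superfluous. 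You cite the right lemmas (\ref{square}, \ref{K1multiplication}, \ref{ptrace}) but never isolate this identity, and without it your outline does not close.
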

\begin{proof}
Let $n=mp$ for $p$ a prime number. We will prove that $mh[x-a][x-(1-a)]=0$. Let $b=\sqrt[p]{a}$. Then
$0=mph[x-b][x-(1-b)]=mh[x-a][x-(1-b)]$ over $k(b).$ Then by~\ref{ptrace}
\[0=tr_{k(b)/k}([mh[x-a][x-(1-b)]])=mh[x-a]tr_{k(b)/k}[x-(1-b)]=mh[x-a][x-(1-a)].\]
\end{proof}

\begin{lem}\label{Steinberg}
For any $a\in k^{\times}$, $a\neq 1$ we have $[x-a]\cdot[x-(1-a)]=0$ in $H_0(\ZZ F(\Delta^{\bullet}_k,\Gm^{\wedge 2})).$
\end{lem}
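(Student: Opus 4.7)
The goal is to establish the Steinberg relation $[x-a]\cdot[x-(1-a)]=0$ in $H_0(\ZZ F(\Delta^{\bullet}_k,\Gm^{\wedge 2}))$ by assembling the algebraic identities in $\Gm^{\wedge 1}$ accumulated in this section together with the transfer/bootstrap machinery of Lemmas~\ref{ptrace} and~\ref{12}. A naive approach of deforming the support point $(a,1-a)\in\Gm^2$ by an $\A^1$-homotopy of points fails because the path from $a$ to the basepoint $1$ in $\Gm$ cannot avoid $0$. Instead the argument combines ``reducible'' algebraic deformations with transfers from extensions where $a$ or $1-a$ becomes a power.

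\textbf{Step 1 (algebraic identity).} Apply Lemma~\ref{K1multiplication} at $b=1-a$, and then symmetrically with the roles of $a$ and $1-a$ exchanged. Both applications express $\la a(1-a)-1\ra\,[x-a(1-a)]$ in terms of $[x-a]$ and $[x-(1-a)]$ (using that $a(1-a)-a=-a^2$ and $(1-a)a-(1-a)=-(1-a)^2$ are up to squares equal to $-1$). Equating the two expressions yields the identity
\[
(\la 1-a\ra-1)\,[x-a] \;=\; (\la a\ra-1)\,[x-(1-a)]
\]
in $\Gm^{\wedge 1}$. Multiplying by $[x-(1-a)]$ in $\Gm^{\wedge 2}$ gives $(\la 1-a\ra-1)\cdot[x-a][x-(1-a)] = (\la a\ra-1)\cdot[x-(1-a)]^2$, and multiplying by $[x-a]$ combined with the $\Gm^{\wedge 2}$-swap symmetry of Lemma~\ref{alphachange} (which introduces a factor $\la -1\ra$) gives a dual relation involving $[x-a]^2$. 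The ``squares'' $[x-a]^2,\,[x-(1-a)]^2$ are themselves annihilated by $(1-\la -1\ra)$ again by swap symmetry.

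\textbf{Step 2 (transfer and bootstrap).} To show that the residual identities force vanishing, pass to $L=k(\sqrt[p]{a})$ for varying primes $p$. Over $L$ the element $a=\beta^p$ is a $p$-th power, so by iterated use of Lemma~\ref{square} the class $[x-a]$ over $L$ can be rewritten in terms of $[x-\beta]$ with explicit $h$-coefficients (namely $[x-a]=h[x-\beta]$ when $p=2$). Combining Lemma~\ref{ptrace} with the projection formula Lemma~\ref{projection} we get $\la l\ra\cdot[x-a][x-(1-a)]=tr_{L/k}([x-a][x-(1-\beta)])$; substituting the expression of $[x-a]$ over $L$ and applying Lemma~\ref{12} iteratively across the resulting tower of radical extensions collapses the Steinberg identity to the hyperbolic-annihilated case $h\cdot[x-a][x-(1-a)]=0$ over $k$. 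Finally, the identities of Step~1 (which are valid over every extension) upgrade $h$-annihilation to the full Steinberg relation.

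\textbf{Main obstacle.} The hard step is the final passage from ``$h$-annihilated'' to unmodified Steinberg: since $h$ is not invertible in $GW(k)=K_0^{MW}(k)$, one cannot simply cancel. The resolution is to assemble the various relations --- the two equalities derived from Lemma~\ref{K1multiplication}, the swap-symmetry annihilation of $[x-a]^2$ and $[x-(1-a)]^2$, and the transfer formulas across radical extensions --- into a linear system in $A=[x-a][x-(1-a)]$, $B=[x-a]^2$, $C=[x-(1-a)]^2$ whose only consistent solution is $A=0$.
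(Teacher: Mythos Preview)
Your proposal has a genuine gap at the most crucial point: you never explain how to obtain the relation $nh\,[x-a][x-(1-a)]=0$ for \emph{some} integer $n$, which is precisely what Lemma~\ref{12} requires as input. In Step~2 you assert that Lemmas~\ref{ptrace}, \ref{projection} and~\ref{12} ``collapse'' the problem to the $h$-annihilated case, but Lemma~\ref{12} goes in the opposite direction: it deduces $h\cdot(\text{Steinberg})=0$ \emph{from} the hypothesis $nh\cdot(\text{Steinberg})=0$ over all extensions. Nothing in your outline produces such an $n$. The paper supplies this missing ingredient by an explicit $\A^1$-deformation of the cubic $x^3-t(a^3+1)x^2+t(a^3+1)x-a^3$ (following Voevodsky's argument for Milnor $K$-theory), carried out over $k(\omega)$ with $\omega$ a primitive cube root of unity; after some bookkeeping and a transfer back to $k$ this yields $12h\,[x-a][x-(1-a)]=0$, and only then does Lemma~\ref{12} apply. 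Without some analogue of this deformation step, your argument does not get off the ground.

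Your Step~1 identity $(\langle 1-a\rangle-1)[x-a]=(\langle a\rangle-1)[x-(1-a)]$ is correct but, translated into $K^{MW}$ language, it is just the tautology $\eta[1-a][a]=\eta[a][1-a]$; it carries no new information, and the ``linear system'' you describe in $A,B,C$ does not force $A=0$ (for instance, over a field with $\langle-1\rangle=1$ all your constraints on $B,C$ become vacuous). The paper's passage from $h$-annihilation to the full Steinberg relation instead uses the quadratic extension $k(\sqrt{a})$: over that field Lemma~\ref{square} gives $[x-a]=h[x-\sqrt{a}]$ in $\Gm^{\wedge 1}$, so $h[x-\sqrt{a}][x-(1-\sqrt{a})]=0$ becomes $[x-a][x-(1-\sqrt{a})]=0$, and transferring down via Lemma~\ref{ptrace} yields $\langle 2\rangle[x-a][x-(1-a)]=0$. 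You allude to this idea in Step~2, but you have the logical order reversed --- it is the \emph{final} step, not a reduction \emph{to} the $h$-annihilated case.
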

\begin{proof}
The proof consists of two steps. First we prove that $h[x-a]\cdot[x-(1-a)]=0$ in $H_0(\ZZ F(\Delta^{\bullet}_L,\Gm^{\wedge 2}))$ for any field extension $L/k,$ and $a\in L^{\times}, a\neq 1$ where $h\in GW(L)$ is the hyperbolic plane.

Once this is done, the Steinberg relation follows since
$0=tr_{k(\sqrt{a})/k}(h[x-\sqrt{a}][x-(1-\sqrt{a})])\stackrel{\ref{square}}=tr_{k(\sqrt{a})/k}([x-a][x-(1-\sqrt{a})])=[x-a]\cdot tr_{k(\sqrt{a})/k}([x-(1-\sqrt{a})])\stackrel{\ref{ptrace}}=\langle 2\rangle[x-a][x-(1-a)].$

Now we check that $h[x-a]\cdot[x-(1-a)]=0.$
Here we use the same arguments as in~\cite[Proposition 5.9]{Voevodsky}.
The lemma~\ref{K1multiplication} implies that
\[h[x-ab]=h[x-a]+h[x-b]\eqno{(*)}\]
Now consider a framed correspondence in $Fr_1(\A^1,\A^1\setminus\{1,0\})$ defined as
$c=(\A^2,x^3-t(a^3+1)x^2+t(a^3+1)x-a^3,p_2)$ where $t$ is the coordinate on $A^1$ and $x$ is the coordinate on $\A^1\setminus\{1,0\}$. Composing with the embedding $\A^1\setminus\{1,0\}\to\Gm^2$, $a\mapsto (a,1-a)$ we get a correspondence $c'\in Fr(\A^1,\Gm^2)$. Let $c'_0$ and $c'_1$ be its fibers in $Fr(k,\Gm^2)$ over $0$ and $1$ respectively. Consider the field $k(\omega)$ where $\omega$ is a principal third root of unity.
Then by $(*)$ we have in $H_0(\ZZ F(\Delta^{\bullet}_{k(\omega)},\Gm^{\wedge 2}))$
\[
hc'_0=h[x-a][x-(1-a)]+h[x-\omega a][x-(1-\omega a)]+h[x-\omega^2a][x-(1-\omega^2a)]=
\]
\[=h[x-a][x-(1-a^3)]+h[x-\omega][x-(1-\omega a)(1-\omega^2a)^2]\]
and
\[
hc'_1=h[x-a^3][x-(1-a^3)]+h[x+\omega][x-(1+\omega)]+h[x+\omega^2][x-(1+\omega^2)]
\]
Since $3h[x-\omega]=h[x-1]=0$ and $3h[x-a]=h[x-a^3]$ and
$3hc'_0=3hc'_1$, we get that
\[
h[x-a^3][x-(1-a^3)]=3h[x-a^3][x-(1-a^3)]+h[x+1][x-(1+\omega)]+h[x+1][x-(1+\omega^2)].
\]
Note that $h[x+1][x-(1+\omega)]+h[x+1][x-(1+\omega^2)]=h[x+1][x-1]=0$
so $2h[x-a^3][x-(1-a^3)]=0$ in $H_0(\ZZ F(\Delta^{\bullet}_{k(\omega)},\Gm^{\wedge 2}))$.

Then by the projection formula~\ref{projection}
\[0=tr_{k(\omega)/k}(2h[x-a^3][x-(1-a^3)])=2h[x-a^3][x-(1-a^3)]tr_{k(\omega)/k}(\la 1\ra)\]
Since $tr_{k(\omega)/k}(\la 1\ra)$ is some quadratic from of degree $2$ we have $h\cdot tr_{k(\omega)/k}(\la 1\ra)=2h$
Thus $0=4h[x-a^3][x-(1-a^3)]=12h[x-a][x-(1-a^3)]$ in $H_0(\ZZ F(\Delta^{\bullet}_k,\Gm^{\wedge 2})).$

Now take $L=k(\sqrt[3]{a})$ so we have $0=12h[x-\sqrt[3]{a}][x-(1-a)]$ over $L$ and since $h tr_{L/k}([x-\sqrt[3]{a}])=h[x-a]$ by~\ref{ptrace} we get that $12h[x-a][x-(1-a)]=0$ over $k$. Then $h[x-a][x-(1-a)]=0$ by Lemma~\ref{12}.

\end{proof}

\begin{dfn}
Denote by $c_{\eta}$ the correspondence in $Fr_1({\Gm},\Spec k)$ defined by the data $(\Gm\times\A^1,xy-1,pr_k)$ where $x$ is the coordinate on ${\Gm}$ and $y$ is the coordinate on $\A^1.$
Consider the projection $p\colon\Gm\to\Spec k$ as an element of $Fr_0({\Gm},\Spec k)$. So $c_{\eta}-p$ is an element of $\ZZ F({\Gm},\Spec k).$
\end{dfn}

\begin{lem}\label{etaa}
$c_{\eta}\circ(\la\lambda\ra[x-a])=\la \lambda a\ra.$
\end{lem}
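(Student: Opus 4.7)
The plan is to reduce to the case $\lambda=1$ using the compatibility of the external product with composition (Remark~\ref{prodcomp}), and then to verify $c_\eta\circ[x-a]=\langle a\rangle$ directly by a short sequence of elementary framing manipulations.

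For the reduction, $\langle\lambda\rangle\cdot[x-a]$ is by construction the external product $\langle\lambda\rangle\times[x-a]$ in $Fr_2(\Spec k,\Gm)$. Applying Remark~\ref{prodcomp} to the pairs $f_1=id_{\Spec k}\in Fr_0(\Spec k,\Spec k)$, $f_2=[x-a]\in Fr_1(\Spec k,\Gm)$, $g_1=\langle\lambda\rangle\in Fr_1(\Spec k,\Spec k)$, $g_2=c_\eta\in Fr_1(\Gm,\Spec k)$, and using Lemma~\ref{SL} to permute framing coordinates when needed, one obtains
\[c_\eta\circ(\langle\lambda\rangle\cdot[x-a])=\langle\lambda\rangle\cdot(c_\eta\circ[x-a])\]
in $H_0(\ZZ F(\Delta^\bullet_k,\Spec k))=GW(k)$, so it suffices to show $c_\eta\circ[x-a]=\langle a\rangle$.

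Unwinding the composition, $c_\eta\circ[x-a]$ is the level $2$ correspondence $(\Gm\times\A^1,(x-a,xy-1),pr_k)\in Fr_2(\Spec k,\Spec k)$, with support the single rational point $(a,1/a)$. I would then run three reductions. First, the deformation $(x-a,\,xy-1-ty(x-a))$ over $\A^1_t$ has constant support $\{(a,1/a)\}$ (because modulo $x-a$ the second coordinate becomes $ay-1$ independent of $t$) and exhibits the equivalence with $(\Gm\times\A^1,(x-a,ay-1),pr_k)$; alternatively this is an instance of Lemma~\ref{nilpotent} after swapping framing coordinates via Lemma~\ref{SL}. Second, extending the étale neighborhood from $\Gm\times\A^1$ to $\A^2$ (which does not affect the equivalence class, since both are étale neighborhoods of the support), the linear homotopy $(x-(1-t)a,\,ay-(1-t))$ has support the graph of the section $t\mapsto((1-t)a,(1-t)/a)$, finite over $\A^1_t$, and produces the equivalence with $(\A^2,(x,ay),pr_k)$. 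Third, Lemma~\ref{SL} applied to the matrix $\operatorname{diag}(a,a^{-1})\in\gSL_2(k)$ turns the framing $(x,ay)$ into $(ax,y)$, and the resulting $(\A^2,(ax,y),pr_k)=\sigma_{\Spec k}(\A^1,ax,pr_k)$ stabilises the representative $\langle ax\rangle=\Psi_0(\langle a\rangle)$ of $\langle a\rangle\in GW(k)$.

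The one point needing attention is the second reduction, where the deformation moves the support through $x=0$ at $t=1$; this forces one to work on $\A^2$ rather than on $\Gm\times\A^1$. Since both the initial and final supports are single rational points contained in $\Gm\times\A^1\subset\A^2$, the change of étale neighborhood does not alter the equivalence class, so the passage is legitimate and the homotopy defines an element of $Fr_2(\A^1,\Spec k)$ as required. Combined with the multiplicativity step this yields $c_\eta\circ(\langle\lambda\rangle\cdot[x-a])=\langle\lambda\rangle\cdot\langle a\rangle=\langle\lambda a\rangle$, completing the plan.
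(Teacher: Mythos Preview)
Your argument is correct and follows essentially the same route as the paper: both reduce the composition to a standard level-one representative via elementary framing manipulations (row operations from Lemma~\ref{SL}, the nilpotent modification of Lemma~\ref{nilpotent}, and a linear homotopy that forces passage from $\Gm$ to the full affine line). The paper simply carries $\lambda$ through the same chain of equivalences in one line,
\[
(\Gm\times\A^1,(xy-1,\lambda(y-a)),pr_k)\sim(\Gm\times\A^1,(\lambda(ax-1),y),pr_k)\sim(\A^1,\lambda ax,pr_k)=\langle\lambda a\rangle,
\]
so your preliminary reduction to $\lambda=1$ via Remark~\ref{prodcomp} is legitimate but unnecessary.
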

\begin{proof}
By definition, the composition $c_{\eta}\circ([x-a])$ is given by $(\Gm\times\A^1,(xy-1,\lambda(y-a)),pr_k)\sim (\Gm\times\A^1,\lambda(ax-1),y),pr_k)\sim (\A^1,\lambda ax,pr_k)=\la \lambda a\ra$.
\end{proof}

\begin{lem}\label{i1}
The composition $H_0(\ZZ F(\Delta^{\bullet}_k,\Spec k))\stackrel{i_{1*}}\to H_0(\ZZ F(\Delta^{\bullet}_k,\Gm))\stackrel{(c_{\eta}-p)_*}\to H_0(\ZZ F(\Delta^{\bullet}_k,\Spec k))$ is zero.
\end{lem}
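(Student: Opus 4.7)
The plan is to reduce to checking the claim on generators. By Lemma~\ref{kk}, every class in $H_0(\ZZ F(\Delta^{\bullet}_k,\Spec k))$ is represented by a $\ZZ$-linear combination of standard classes $\la\lambda x\ra=[(\A^1,\lambda x,pr_k)]$ with $\lambda\in k^{\times}$, so by additivity of $i_{1*}$ and of composition it suffices to verify that $(c_{\eta}-p)_*\, i_{1*}(\la\lambda x\ra)=0$ for each such $\lambda$.

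I would then unwind both compositions directly from the definition. Here $i_{1*}(\la\lambda x\ra)$ is represented by $(\A^1,\lambda x,1_{\Gm})$, where $1_{\Gm}\colon\A^1\to\Gm$ is the constant map to the distinguished point. For $p\circ i_{1*}(\la\lambda x\ra)$, the fiber product $\A^1\times_{\Gm}\Gm$ formed via the constant map $1_{\Gm}$ and the identity is just $\A^1$, the level does not change (since $p$ is level zero and contributes no new framing function), and the map to $\Spec k$ is trivial; thus this composition equals $\la\lambda x\ra$ on the nose.

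For $c_{\eta}\circ i_{1*}(\la\lambda x\ra)$, the fiber product $\A^1\times_{\Gm}(\Gm\times\A^1)$ taken via $1_{\Gm}$ and $pr_{\Gm}$ forces the $\Gm$-coordinate of $c_{\eta}$ to equal $1$, leaving $\A^1\times\A^1$ with coordinates $x,y$; the concatenated framing $(\lambda x,\, xy-1)$ then specialises to $(\lambda x,\, y-1)$, so the composition is represented by $(\A^2,(\lambda x, y-1),pr_k)$. Applying Lemma~\ref{addone} with $\gamma=1$ identifies this correspondence with $(\A^1,\lambda x,pr_k)=\la\lambda x\ra$ in $H_0(\ZZ F(\Delta^{\bullet}_k,\Spec k))$. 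Consequently $c_{\eta}\circ i_{1*}(\la\lambda x\ra)=p\circ i_{1*}(\la\lambda x\ra)=\la\lambda x\ra$, and their difference vanishes. The only point requiring care is the bookkeeping of the fiber product and of the concatenated framing in the composition with $c_{\eta}$, but this is a direct calculation rather than a genuine obstacle.
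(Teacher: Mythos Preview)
Your proof is correct and follows essentially the same approach as the paper. The only cosmetic difference is that the paper packages the computation of $c_{\eta}\circ i_{1*}(\la\lambda x\ra)$ by citing the preceding Lemma~\ref{etaa} (the case $a=1$ gives $c_{\eta}\circ(\la\lambda\ra[x-1])=\la\lambda\ra$), whereas you unwind that special case directly and finish with Lemma~\ref{addone}; the reduction to generators via Lemma~\ref{kk} is equivalent to the paper's appeal to Proposition~\ref{m0}. One small notational remark: in your line ``the concatenated framing $(\lambda x,\,xy-1)$ then specialises to $(\lambda x,\,y-1)$'' the letter $x$ is used for two different coordinates (the $\A^1$-coordinate of $\la\lambda x\ra$ and the $\Gm$-coordinate of $c_\eta$); the computation is nonetheless correct once these are disambiguated.
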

\begin{proof}
By proposition~\ref{m0} it is sufficient to check that $(c_{\eta}-p)\circ (\la a\ra[x-1])=0$. Since $p\circ(\la a\ra[x-1])=\la a\ra,$ by~\ref{etaa} we have $(c_{\eta}-p)\circ (\la a\ra[x-1])=\la a\ra -\la a\ra=0$.
\end{proof}

\begin{lem}
The operator $((c_{\eta}-p)\times id)_*\colon H_0(\ZZ F(\Delta^{\bullet}_k,\Gm^{2}))\to H_0(\ZZ F(\Delta^{\bullet}_k,\Gm))$
descends to a map
$(c_{\eta}-p)\wedge id)_*\colon H_0(\ZZ F(\Delta^{\bullet}_k,\Gm^{\wedge 2}))\to H_0(\ZZ F(\Delta^{\bullet}_k,\Gm^{\wedge 1}))$
and for $f,g\in H_0(\ZZ F(\Delta^{\bullet}_k,\Gm^{\wedge 1}))$ we have $((c_{\eta}-p)\wedge id)_*(f\cdot g)=(c_{\eta}-p)_*(f)\cdot g.$
\end{lem}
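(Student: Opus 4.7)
The plan is to reduce both assertions to Remark~\ref{prodcomp} combined with Lemma~\ref{i1}. The key observation is that, under the product pairing of Section~\ref{MultStruct}, the inclusions $f_1,f_2\colon\Gm\to\Gm^2$ translate into product insertions of the rational point $[1]=i_{1*}(\la 1\ra)\in H_0(\ZZ F(\Delta^{\bullet}_k,\Gm))$: namely $f_{1*}(c)=[1]\times c$ and $f_{2*}(c)=c\times[1]$ for every $c\in H_0(\ZZ F(\Delta^{\bullet}_k,\Gm))$.

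First I would check the descent. By $\ZZ$-linearity it suffices to evaluate $((c_\eta-p)\times id)_*$ on $f_{1*}(c)$ and on $f_{2*}(c)$. Applying Remark~\ref{prodcomp} (separately to $c_\eta$ and to $p$, then taking the difference) yields
\[((c_\eta-p)\times id)\circ([1]\times c)=((c_\eta-p)_*[1])\times c,\]
\[((c_\eta-p)\times id)\circ(c\times[1])=((c_\eta-p)_*c)\times[1].\]
The first expression vanishes because Lemma~\ref{i1} gives $(c_\eta-p)_*[1]=(c_\eta-p)_*i_{1*}(\la 1\ra)=0$. The second expression, under the same identification as above, equals $f_{1*}((c_\eta-p)_*c)$ and is therefore killed in the cokernel $H_0(\ZZ F(\Delta^{\bullet}_k,\Gm^{\wedge 1}))$. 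This shows that $((c_\eta-p)\times id)_*$ descends to a well-defined map on the smash-product quotients.

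The multiplicativity formula is then immediate. For $f,g\in H_0(\ZZ F(\Delta^{\bullet}_k,\Gm^{\wedge 1}))$ represented by classes in $H_0(\ZZ F(\Delta^{\bullet}_k,\Gm))$, the smash product $f\cdot g$ in $H_0(\ZZ F(\Delta^{\bullet}_k,\Gm^{\wedge 2}))$ is represented by $f\times g$, and Remark~\ref{prodcomp} applied one last time gives
\[((c_\eta-p)\times id)\circ(f\times g)=((c_\eta-p)_*f)\times g.\]
Under the identification $\GW(k)=H_0(\ZZ F(\Delta^{\bullet}_k,\Spec k))$ from Proposition~\ref{m0}, the right-hand side is precisely the $\GW(k)$-module action $(c_\eta-p)_*(f)\cdot g$ on $H_0(\ZZ F(\Delta^{\bullet}_k,\Gm^{\wedge 1}))$.

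The only real obstacle I anticipate is the bookkeeping: verifying, from the explicit definition of the product in Section~\ref{MultStruct}, that indeed $f_{1*}(c)=[1]\times c$ and $f_{2*}(c)=c\times[1]$, and that the analogous statement holds also for the sum $((c_\eta-p)\times id)$ of level-$1$ and level-$0$ data. Both are direct unwindings of the product construction, and once they are in place the proof is a mechanical consequence of Remark~\ref{prodcomp} and Lemma~\ref{i1} with no further calculation.
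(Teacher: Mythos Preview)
Your proposal is correct and follows essentially the same approach as the paper: both arguments reduce to Remark~\ref{prodcomp} for the factorisation $((c_\eta-p)\times id)\circ(a\times b)=((c_\eta-p)\circ a)\times b$, then use Lemma~\ref{i1} to kill the term where the first factor is the base point and the smash quotient to kill the term where the second factor is the base point. The only difference is presentational (your $f_{1*},f_{2*}$ versus the paper's $i_1\times a,\,a\times i_1$); the bookkeeping you flag is indeed a direct unwinding and the paper treats it as implicit.
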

\begin{proof}
Let $i_1\colon\Spec k\to\Gm$ be the inclusion of unity. For every $a\in H_0(\ZZ F(\Delta^{\bullet}_k,\Gm))$
we will show that $(c_{\eta}-p)\times id\circ (a\times i_1)=0$ and $(c_{\eta}-p)\times id\circ (i_1\times a)=0$ in $H_0(\ZZ F(\Delta^{\bullet}_k,\Gm^{\wedge 2}))$ Indeed, $(c_{\eta}-p)\times id\circ a\times i_1=((c_{\eta}-p)\circ a)\times i_1=0$ and $(c_{\eta}-p)\times id\circ i_1\times a=((c_{\eta}-p)\circ i_1)\times a=0$ by~\ref{i1}. The last statement follows from~\ref{prodcomp}.
\end{proof}

\begin{cor}\label{Steinbergin1}
$\la 1-a\ra[x-a]=[x-a]$ in $H_0(\ZZ F(\Delta^{\bullet}_k,\Gm^{\wedge 1})).$
\end{cor}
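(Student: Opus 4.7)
The plan is to derive this corollary from the Steinberg relation (Lemma~\ref{Steinberg}) by pushing it through the operator $((c_\eta - p)\wedge id)_*$ constructed in the preceding lemma. Since the hypothesis $a\in k^{\times}$, $a\neq 1$ is symmetric under the involution $a\leftrightarrow 1-a$ (both values lie in $k^{\times}\setminus\{1\}$), I first substitute $a\mapsto 1-a$ in the Steinberg relation to obtain the ``reversed'' form
\[
[x-(1-a)]\cdot[x-a]=0\quad\text{in }H_0(\ZZ F(\Delta^{\bullet}_k,\Gm^{\wedge 2})).
\]
The point of working with this form is that applying $((c_\eta-p)\wedge id)_*$ will then strip the left-hand factor.

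Next I apply $((c_\eta-p)\wedge id)_*$ to this equality. By the formula $((c_\eta-p)\wedge id)_*(f\cdot g)=(c_\eta-p)_*(f)\cdot g$ supplied by the preceding lemma, with $f=[x-(1-a)]$ and $g=[x-a]$, the relation becomes
\[
(c_\eta-p)_*([x-(1-a)])\cdot[x-a]=0
\]
in $H_0(\ZZ F(\Delta^{\bullet}_k,\Gm^{\wedge 1}))$, where the first factor now lives in $H_0(\ZZ F(\Delta^{\bullet}_k,\Spec k))=GW(k)$.

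It remains to identify this coefficient. Lemma~\ref{etaa} gives $c_\eta\circ[x-(1-a)]=\la 1-a\ra$ directly. For the other term, the composition formula unfolds $p\circ[x-(1-a)]$ to the level-one correspondence $(\Gm,x-(1-a),pr_k)$, which extends across $0$ to $(\A^1,x-(1-a),pr_k)$; this has the same leading coefficient as $\la x\ra$, so by Lemma~\ref{deform0} it represents $\la 1\ra\in GW(k)$ under the identification of Proposition~\ref{m0}. Hence $(c_\eta-p)_*([x-(1-a)])=\la 1-a\ra-\la 1\ra$, and the relation collapses to $\la 1-a\ra[x-a]=[x-a]$, as desired.

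I do not anticipate any real obstacle: the argument is a direct pushforward of the Steinberg relation along an already-established module map. The only mildly delicate verification is that $p\circ[x-(1-a)]$ contributes $\la 1\ra$ rather than something involving $a$ (which would spoil the cancellation), but this is immediate from the leading-coefficient argument via Lemma~\ref{deform0}.
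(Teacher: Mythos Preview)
Your proof is correct and follows essentially the same approach as the paper: apply $((c_\eta-p)\wedge id)_*$ to the Steinberg relation $[x-(1-a)]\cdot[x-a]=0$ and identify the resulting $GW(k)$-coefficient as $\la 1-a\ra-1$. You supply more detail than the paper does---in particular the explicit justification that $p\circ[x-(1-a)]=\la 1\ra$ via extension to $\A^1$ and Lemma~\ref{deform0}---but the route is the same.
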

\begin{proof}
By~\ref{Steinberg} $[x-(1-a)]\cdot[x-a]=0$. Then $0=(c_{\eta}-p)\wedge id)_*([x-(1-a)]\cdot[x-a])=(c_{\eta}-p)_*([x-(1-a)])\cdot [x-a]=(\la 1-a\ra-1)\cdot [x-a].$
\end{proof}

\begin{rem}\label{K1}
According to~\cite[Lemma 2.4]{Morel} $K_1^{MW}(k)$ is generated as an abelian group by the symbols $[\eta^m,u_1,\ldots u_{m+1}]$ for $m\geqslant 0,u_i\in k^{\times}$ modulo the following relations:
\begin{itemize}
\item[(1)] (Steinberg relation) $[\eta^m,u_1,\ldots,u_{m+1}]=0$ if $u_i+u_{i+1}=1$ for some $i$
\item[(2)] $[\eta^m,u_1,\ldots,u_{i-1},ab,u_{i+1},\ldots,u_m]=[\eta^m,u_1,\ldots,u_{i-1},a,u_{i+1},\ldots,u_m]+$\\
$+[\eta^m,u_1,\ldots,u_{i-1},b,u_{i+1},\ldots,u_m]+[\eta^{m+1},u_1,\ldots,u_{i-1},a,b,u_{i+1},\ldots,u_m]$
\item[(3)] $[\eta^{m+2},u_1,\ldots,u_{i-1},-1,u_{i+1},\ldots,u_{m+3}]+2[\eta^{m+1},u_1m\ldots,u_{i-1},u_{i+1},\ldots,u_{m+3}]=0$
\end{itemize}
\end{rem}

\begin{lem}\label{K1presentation}
As a $GW(k)$-module, degree one Milnor-Witt K-theory $K_1^{MW}(k)$ is generated by symbols $[a]$ modulo the relations $\langle 1-a\rangle[a]=[a]$, $[ab]=[a]+\langle a\rangle[b]$ for $a,b\in k^{\times}$
\end{lem}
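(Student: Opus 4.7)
The plan is to identify $K_1^{MW}(k)$ with the $\GW(k)$-module $N$ determined by the asserted presentation, by constructing mutually inverse homomorphisms of $\GW(k)$-modules.

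First I would define $\psi\colon N \to K_1^{MW}(k)$ by $[a]\mapsto [a]$. Well-definedness follows from Morel's identity $\la u\ra = 1 + \eta[u]$ in $K_0^{MW}(k) = \GW(k)$: the Steinberg relation $[\eta, 1-a, a] = \eta[1-a][a] = 0$ of Remark~\ref{K1}(1) translates into $(\la 1-a\ra - 1)[a] = 0$, while combining the same identity with Remark~\ref{K1}(2) at $m=0$ (which reads $[ab] = [a] + [b] + \eta[a][b]$) yields $[ab] = [a] + \la a\ra[b]$. Both defining relations of $N$ therefore hold in $K_1^{MW}(k)$.

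For the inverse I would use the presentation from Remark~\ref{K1} and set
\[
\phi([\eta^m, u_1,\ldots, u_{m+1}]) \;=\; \prod_{j=1}^{m}(\la u_j\ra - 1)\cdot [u_{m+1}] \;\in\; N,
\]
the natural translation of $\eta^m[u_1]\cdots[u_{m+1}]$ via the $\GW(k)$-action. A preliminary observation, following from the product formula in $N$, is that $(\la a\ra - 1)[b] = [ab] - [a] - [b] = (\la b\ra - 1)[a]$, so any $u_j$ may be placed in the bracket slot of $\phi$. The main task is then to verify that $\phi$ respects the three relations of Remark~\ref{K1}. Relation~(2) transcribes directly into the product formula of $N$. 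For Steinberg~(1) with $u_i + u_{i+1} = 1$: if neither of the two lies in the bracket slot, the image contains the factor $(\la u_i\ra - 1)(\la 1-u_i\ra - 1)$, which vanishes already in $\GW(k)$ by the Witt identity $\la a\ra + \la 1-a\ra = 1 + \la a(1-a)\ra$; if one of them does lie in the bracket slot, the symmetry above lets me swap it there, producing a factor $(\la 1-a\ra - 1)[a]$, which vanishes by the first relation of $N$. For relation~(3) the two terms combine to a coefficient $(\la -1\ra - 1) + 2 = 1 + \la -1\ra = h$ times a nonempty product $\prod(\la u_j\ra - 1)$, and then $h(\la u\ra - 1) = 0$ in $\GW(k)$ since $\la u, -u\ra$ is hyperbolic.

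Once $\phi$ is known to be a well-defined $\GW(k)$-module map, both compositions $\psi\circ\phi$ and $\phi\circ\psi$ act as the identity on generators by construction, so $\phi$ and $\psi$ are mutually inverse isomorphisms. I expect the main obstacle to be the Steinberg case in which one of the critical symbols occupies the bracket slot of $\phi$: the vanishing there is not automatic at the level of $\GW(k)$, and hinges on combining the symmetric rewriting of $\phi$ with the first defining relation of $N$.
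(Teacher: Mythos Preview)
Your proposal is correct and follows essentially the same strategy as the paper: define the map $F\colon K_1^{MW}(k)\to A$ (your $\phi$) by $[\eta^m,u_1,\ldots,u_{m+1}]\mapsto \prod_{j=1}^m(\la u_j\ra-1)[u_{m+1}]$, use the symmetry $(\la a\ra-1)[b]=(\la b\ra-1)[a]$ in $A$, verify the three relations of Remark~\ref{K1}, and observe that the obvious map in the other direction is an inverse. The only cosmetic difference is your handling of the Steinberg relation: you split into two cases and invoke the $\GW(k)$-identity $(\la a\ra-1)(\la 1-a\ra-1)=0$ when both critical symbols sit outside the bracket slot, whereas the paper simply uses the symmetry to permute one of them into the bracket slot in every case, reducing uniformly to $(\la 1-u_{m+1}\ra-1)[u_{m+1}]=0$. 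Both arguments are valid; the paper's is slightly shorter since it avoids the extra $\GW(k)$ computation.
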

\begin{proof}
Let $A$ denote the $GW(k)$-module generated by the symbols $[a]$ modulo the relations $\langle 1-a\rangle[a]=[a]$, $[ab]=[a]+\langle a\rangle[b]$ for $a,b\in k^{\times}.$ Recall that $GW(k)$ is identified with $K_0^{MW}(k)$ via $\langle a\rangle=\eta[a]+1$ (\cite[\S 2.1]{Morel}). This suggests a mapping $F\colon K_1^{MW}(k)\to A.$
\[F\colon [\eta^m,u_1,\ldots u_{m+1}]\mapsto (\langle u_1\rangle-1)\ldots(\langle u_m\rangle-1)[u_{m+1}]\]

Let us check that $F$ is well defined.
First, note that the relation $[ab]=[a]+\langle a\rangle[b]$ implies that $(\langle a\rangle-1)[b]=(\langle b\rangle-1)[a]$ in $A$. Then for any permutation $\sigma$ of the set $\{u_1,\ldots u_{m+1}\}$ we have $F([\eta^m,\sigma(u_1),\ldots\sigma(u_{m+1})])=F([\eta^m,u_1,\ldots u_{m+1}])$.
Now check that the relations $(1)-(3)$ of Remark~\ref{K1} hold for $F$-images in $A$:

The Steinberg relation $(1):$ consider a symbol $x=[\eta^m,u_1,\ldots,u_{m+1}]$ with $u_j+u_{j+1}=1$ for some $j$. Since $F(x)$ is stable under any permutation of $u_i,$ we may assume that $j=m,$ so $u_m=1-u_{m+1}$. Then $F(x)=0$ since $(\la 1-u_{m+1}\ra-1)[u_{m+1}]=0$ in $A.$

The relation $(2)$: Let $x=[\eta^m,u_1,\ldots,u_{i-1},ab,u_{i+1},\ldots,u_{m+1}].$ Then $F(x)=F([\eta^m,u_1,\ldots,u_{i-1},u_{i+1},\ldots,u_{m+1},ab])=\prod_{j\neq i}(\la u_{j}\ra-1)[ab]=\prod_{j\neq i}(\la u_{j}\ra-1)[a]+\prod_{j\neq i}(\la u_{j}\ra-1)\la a\ra[b]=F([\eta^m,u_1,\ldots,u_{m+1},a])+
F([\eta^m,u_1,\ldots,u_{m+1},b])+$

\noindent$+F([\eta^{m+1},u_1,\ldots,u_{m+1},a,b].$

The relation $(3).$ Denote $y=F([\eta^{m+2},u_1,\ldots,u_{i-1},-1,u_{i+1},\ldots,u_{m+3}])+$

\noindent$+F(2[\eta^{m+1},u_1,\ldots,u_{i-1},u_{i+1},\ldots,u_{m+3}])$. We may assume that $i\neq m+3.$ Denote $q=\prod_{j\neq i}(\la u_j-1\ra-1)\in GW(k).$
Then we have that $y=q(\la -1\ra+1)[u_{m+3}]$ in $A$. Since $m\geqslant 0$, $q$ has at least one factor $(\la u_1\ra-1)$. Since $(\la u_1\ra-1)(\la -1\ra+1)=0$ in $GW(k)$ we get that $y=0$.

Therefore $F\colon K_1^{MW}(k)\to A$ is a well-defined homomorphism of $GW(k)$-modules.
Since the relation $[ab]=[a]+\la a\ra[b]$ holds in $K_1^{MW}(k)$, the mapping $\la a\ra [b]\mapsto [\eta^0,a]+[\eta,a,b]$ from $A$ to $K_1^{MW}(k)$ is a well-defined inverse of $F$, so $F$ is an isomorphism.
\end{proof}

\subsection{Construction of $\Psi$}\label{psiconst}
Now we are ready to construct the map $\Psi\colon K_*^{MW}(k)\to H_0(\ZZ F(\Delta^{\bullet}_k,\Gm^{\wedge *}))$ as follows:
First we construct the $GW(k)$-linear homomorphism
\[\Psi_1\colon K_1^{MW}(k)\to H_0(\ZZ F(\Delta^{\bullet}_k,\Gm^{\wedge 1})), \Psi_1\colon [a]\mapsto [x-a].\] Using the presentation of $K_1^{MW}(k)$ given by~\ref{K1presentation} we see that the relations of $K_1^{MW}(k)$ hold in $H_0(\ZZ F(\Delta^{\bullet}_k,\Gm^{\wedge 1}))$ by Corollary~\ref{Steinbergin1} and Lemma~\ref{K1multiplication}. Then $\Psi_1$ is well defined.
Now we use the presentation of $K_{\geqslant 0}^{MW}(k)$ given by~\cite[Remark 2.2]{Morel}: it is the quotient of the tensor algebra of the $GW(k)$-module $K_1^{MW}(k)$ modulo the Steinberg relation.
\[K_{\geqslant 0}^{MW}(k)=Tens_{GW(k)}K_1^{MW}(k)/([a][1-a]=0)\]
Since the Steinberg relation holds in $H_0(\ZZ F(\Delta^{\bullet}_k,\Gm^{\wedge *}))$ by~\ref{Steinberg}, we get that the map
\[\Psi\colon K_{\geqslant 0}^{MW}(k)\to H_0(\ZZ F(\Delta^{\bullet}_k,\Gm^{\wedge *})), \Psi\colon a_1\otimes\ldots\otimes a_n\mapsto\Psi_1(a_1)\cdot\ldots \Psi_1(a_n)\]
is well defined.

\section{Isomorphism}\label{IsomorphismSection}
In this section we prove the main theorem~\ref{main}.

\begin{lem}\label{taucomp}
Let $L=k(\alpha)$ be a finite extension of $k$. Let $\tau^L_{k}(\alpha)\colon K_1^{MW}(L)\to K_1^{MW}(k)$ be the geometrical transfer defined in~\cite[3.2]{Morel} Then $\tau^L_{k}(\alpha)[\alpha]=\la(-1)^{[L:k]+1}\ra[N(\alpha)].$
\end{lem}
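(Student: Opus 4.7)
The strategy is to unwind Morel's definition of the geometric transfer and produce an explicit lift of $[\alpha]$ to $K_2^{MW}(k(t))$. Recall that $\tau^L_k(\alpha)$ is characterized via $\mathbb{A}^1$-reciprocity: for $y \in K_1^{MW}(L)$ one chooses $\tilde y \in K_2^{MW}(k(t))$ with $\partial_{p(t)}\tilde y = y$ and $\partial_q\tilde y = 0$ for every other closed point $q \in \mathbb{A}^1_k$, and then $\tau^L_k(\alpha)(y) = -\partial^{-1/t}_\infty(\tilde y)$. This is precisely the setup that gave formula~\ref{trcomp} in the degree-zero case, and the plan is to run the same machine one degree higher.

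The first step is to write down such a lift of $[\alpha]$. The natural candidate $\tilde y_0 = [p(t)]\cdot[t]$ has the correct residue at $(p)$, namely $\partial_{p(t)}([p(t)][t]) = [\bar t] = [\alpha]$, and its residue vanishes at every closed point of $\mathbb{A}^1_k$ where both $p(t)$ and $t$ are units. The only remaining finite residue is at $t = 0$, where skew-commutativity $[a][b] = -\la-1\ra[b][a]$ (modulo $\eta$-terms, which can be controlled via $\eta h = 0$) and the residue formulas $\partial_t([t][u]) = [\bar u]$, $\partial_t([u_1][u_2]) = 0$ for units, yield $\partial_t(\tilde y_0) = -\la-1\ra[p(0)]$. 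Consequently
\[
\tilde y \;=\; [p(t)]\cdot[t] \;+\; \la-1\ra[t]\cdot[p(0)]
\]
is a genuine lift of $[\alpha]$ with vanishing residue at every finite closed point other than $(p)$.

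The second step is to evaluate $-\partial_\infty^{-1/t}(\tilde y)$. Writing $\pi = 1/t$ at infinity, we have $t = \pi^{-1}$ and $p(t) = \pi^{-n}u(\pi)$ where $u(\pi) = 1 + a_{n-1}\pi + \cdots + a_0\pi^n$ is a unit at infinity with $u(0) = 1$ (so its symbol contributes nothing to the residue). Using $[\pi^{-1}] = -\la-1\ra[\pi]$ modulo units of value $1$ (and more generally $[\pi^{-n}] \equiv -\la-1\ra\, n_\epsilon\,[\pi]$ modulo such units), together with $[ab] = [a] + [b] + \eta[a][b]$, a direct expansion of $[\pi^{-n}u][\pi^{-1}]$ gives a residue at $\infty$ that, after combining with the contribution $\partial_\infty^{-1/t}(\la-1\ra[t][p(0)])$ of the correction term, simplifies to $-\la(-1)^{n+1}\ra[(-1)^np(0)]$. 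Since $N(\alpha) = (-1)^np(0)$, we obtain $\tau^L_k(\alpha)[\alpha] = \la(-1)^{n+1}\ra[N(\alpha)]$.

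The main obstacle is the sign bookkeeping intrinsic to Milnor--Witt K-theory: skew-commutativity introduces $-\la-1\ra$ and $\eta$-corrections; the identity $[\pi^{-n}]$ in $K_1^{MW}$ involves the quadratic integer $n_\epsilon$; and the target $\la(-1)^{n+1}\ra[N(\alpha)]$ mixes a scalar of parity $n+1$ with an argument of parity $n$. Verifying that the $\eta$-tails vanish (using $\eta h = 0$ applied to the hyperbolic factors arising from $[\pi^{-1}][\pi^{-1}]$ and from $[u][u^{-1}]$) and that all the signs align is the substantive part of the computation.
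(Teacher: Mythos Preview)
Your approach is essentially identical to the paper's: both pick the lift $\tilde y=[p(t)][t]+\la-1\ra[t][p(0)]$ of $[\alpha]$ and compute $-\partial_\infty^{-1/t}(\tilde y)$, arriving at $\la(-1)^{n+1}\ra[N(\alpha)]$. The only difference is that the paper records the residue computation in one line as $-\la-1\ra[(-1)^n]+\la-1\ra[p(0)]$ and then simplifies, whereas you sketch the $\pi$-adic expansion more explicitly; note that Morel's uniformizer at $\infty$ is $-1/t$ rather than $1/t$, so be careful that your sign bookkeeping is done with the right $\pi$.
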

\begin{proof}
Consider a part of the long exact sequence given by~\cite[Theorem 2.24]{Morel} $K_2^{MW}(k(t))\stackrel{\oplus\partial_{(P)}^P}\to\oplus_{P}K_1^{MW}(k[t]/P)$.
Let $p$ be the minimal monic polynomial for $\alpha$.
Then $[p(t)][t]+\la-1\ra[t][p(0)]$ is the preimage of $[\alpha]$ under this map.
Let $n=[L:k]=\deg(p)$
Then by definition we have $\tau^L_k(\alpha)([\alpha])=-\partial_{\infty}^{-1/t}([p(t)][t]+\la-1\ra[t][p(0)])=-\la-1\ra[(-1)^n]+\la-1\ra[p(0)]=\la(-1)^{n+1}\ra[p(0)(-1)^n]=\la(-1)^{n+1}\ra[N(\alpha)].$
\end{proof}

We will use the notation of~\cite[Definition 3.26]{Morel} and for an extension $L=k(\alpha)$ let $\omega_0(\alpha)=p'(\alpha)$ where $p$ is the minimal polynomial of $\alpha$ and $p'$ is its derivative.

\begin{lem}\label{K1decomp} Let $L$ be a finite extension of $k$, $[L:k]=l$ is a prime number and $k$ has no field extensions of degree prime to $l$.
Then $K_1^{MW}(L)$ is generated as abelian group by $GW(L)\cdot K_1^{MW}(k)+A$ where $A=\{\la\pm\omega_0(a)\ra[a]\mid a\in L^{\times}\}$
\end{lem}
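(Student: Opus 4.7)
The plan is to reduce to showing that each symbol $[c]$, $c\in L^{\times}$, lies in the subgroup $G\subseteq K_1^{MW}(L)$ generated (as an abelian group) by $GW(L)\cdot K_1^{MW}(k)\cup A$. By Lemma~\ref{K1presentation}, $K_1^{MW}(L)$ is generated as a $GW(L)$-module by the symbols $[a]$, $a\in L^{\times}$; and from the defining relation $[va]=[v]+\la v\ra [a]$ we obtain $\la v\ra[a]=[va]-[v]$, so that already as an abelian group $K_1^{MW}(L)$ is generated by $\{[c]:c\in L^{\times}\}$. The hypotheses on $L/k$ imply that for each $c\in L^{\times}$, either $c\in k^{\times}$ (in which case $[c]\in K_1^{MW}(k)\subseteq G$) or $L=k(c)$.

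In the nontrivial case $L=k(c)$, set $u=\omega_0(c)$. By definition $\la u\ra[c]\in A\subseteq G$. The key identity is that the two expansions of $[cu]$ in $K_1^{MW}$,
\[
[cu]=[c]+\la c\ra[u]=[u]+\la u\ra[c],
\]
combine to give
\[
[c]=\la u\ra [c]+[u]-\la c\ra[u]=\la u\ra[c]+(1-\la c\ra)[u].
\]
The first term is in $A\subseteq G$, so it is enough to place $(1-\la c\ra)[u]=-\eta[c][u]$ in $G$. If $u=\omega_0(c)\in k^{\times}$, this is immediate: $[u]\in K_1^{MW}(k)$, hence $\la c\ra[u]\in GW(L)\cdot K_1^{MW}(k)\subseteq G$, and we are done.

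The remaining obstacle is the case $\omega_0(c)\notin k$, where necessarily $L=k(\omega_0(c))$ as well. Here I would exploit two auxiliary ingredients. First, both $\la\omega_0(c)\ra[c]$ and $\la-\omega_0(c)\ra[c]$ belong to $A$, so their sum $h[c]=[c]+[-c]-[-1]$ and difference lie in $G$, giving additional control; modulo $G$ these already pin down $[c]$ up to a $2$-torsion issue together with symmetric elements. Second, for any $\lambda\in k^{\times}$ one computes $\omega_0(\lambda c)=\lambda^{l-1}\omega_0(c)$, so the square class of $\omega_0(\lambda c)$ is $\la\omega_0(c)\ra$ when $l$ is odd and $\la\lambda\omega_0(c)\ra$ when $l=2$; this lets us put $\la\omega_0(c)\ra[\lambda c]\in G$ for appropriate $\lambda$ and, after expanding $[\lambda c]$ via the multiplicativity rule, reduce $(1-\la c\ra)[\omega_0(c)]$ to elements already in $G$ together with terms where the analogue of $\omega_0$ does land in $k^{\times}$ (the subcase just handled).

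The main obstacle is precisely this last step: controlling $[\omega_0(c)]$ when $\omega_0(c)\notin k$. My expectation is that the right way to organize the proof is to iterate the identity above applied to $\omega_0(c)$ itself, together with the Galois-stable symmetries $\la\omega_0(c)\ra[c]\leftrightarrow\la-\omega_0(c)\ra[c]$ and the scaling formula $\omega_0(\lambda c)=\lambda^{l-1}\omega_0(c)$, so that each iteration strictly simplifies the square class of the ``new'' $\omega_0$ until it lands in $k^{\times}$ and Subcase 4 applies.
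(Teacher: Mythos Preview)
Your reduction to individual symbols $[c]$ and the identity $[c]=\la u\ra[c]+(1-\la c\ra)[u]$ with $u=\omega_0(c)$ are correct, and the subcase $u\in k^{\times}$ is indeed immediate. The problem is the remaining case $u\notin k$: your proposed iteration has no termination measure. Replacing $c$ by $\lambda c$ multiplies $\omega_0$ by $\lambda^{l-1}$, which only moves $\omega_0(c)$ within its $k^{\times}$-coset in $L^{\times}$ and will never force it into $k^{\times}$. Iterating the same identity with $\omega_0(c)$ in place of $c$ is no better: for any $\alpha\in k$ one checks $\omega_0(c-\alpha)=p'(c)=\omega_0(c)$, so after factoring $p'(c)=l\prod(c-\alpha_i)$ you are led back to symbols with the \emph{same} $\omega_0$. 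There is no descent here, and I do not see how to close the argument purely inside $K_1^{MW}$ along these lines.

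The paper's proof takes a genuinely different route and this is where the hypothesis on $k$ is actually used. Since $\deg p'=l-1$ and $k$ has no prime-to-$l$ extensions, $p'$ splits completely over $k$. The argument then passes through the injective map $\Psi_1$ to the framed side and looks at the level-one correspondence $(\Gm,\,p'(x)(x-a),\,id)$. Decomposing by support gives $\la p'(a)\ra[x-a]$ at the point $a$, and the remaining components sit over the $k$-rational roots of $p'$; by Lemma~\ref{ratsupp} those contribute elements of $\Psi_1\bigl(GW(L)\cdot K_1^{MW}(k)\bigr)$. On the other hand, the deformation Lemma~\ref{deform} (which only fixes the leading and constant coefficients) lets one replace $p'(x)(x-a)$ by a polynomial whose only zero in $\Gm$ is a single simple root, yielding $\la\pm1\ra[x-a\alpha]$ for a suitable $\alpha\in k^{\times}$. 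Pulling back through the injective $\Psi_1$ gives the desired expression of $[a]$ as a sum of an element of $A$ and elements of $GW(L)\cdot K_1^{MW}(k)$. The point you are missing is precisely this use of the splitting of $p'$ over $k$ together with the framed-correspondence deformation; your purely $K$-theoretic manipulation never exploits that hypothesis in an essential way.
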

\begin{proof}
Consider $\Psi_1\colon K_1^{MW}(L)\to H_0(\ZZ F(\Delta^{\bullet}_L,\Gm^{\wedge 1})).$ It is injective since $\Phi_1\circ\Psi_1=id.$
Take $a\in L^{\times}.$ If $a\notin k$, then $L=k(a)$. Let $p\in k[t]$ be the minimal monic polynomial for $a$. Since $k$ has no prime-to-$l$ extensions, all roots of $p'(x)$ lie in $k.$ Then $p'(x)=x^d(x-\alpha_1)\ldots(x-\alpha_m)$ where $m+d=l-1.$ Let $\alpha=\alpha_1\cdot\ldots\cdot\alpha_m\in k^{\times}$.
Note that $\la x\ra[x]=(\eta[x]+1)[x]=\eta[x][x]+[x]\stackrel{\text{\cite[Lemma 2.7]{Morel}}}=\eta[x][-1]+[x]=\la-1\ra[x]$ and $\la x-1\ra[x]=\la -1\ra[x]$. Then we get that
$\la (a\alpha)^d(a\alpha-1)^m\ra[a\alpha]$ equals $[a\alpha]$ or $\la-1\ra[a\alpha]$ depending on parity of $d$ and $m$

Then $\Psi_1(\la (a\alpha)^d(a\alpha-1)^m\ra[a\alpha])=[x^d(x-1)^m(x-a\alpha)]\stackrel{\ref{deform}}=[p'(x)(x-a)]$.
Finally note that for $c=((\Gm)_L\setminus\{a\alpha\},p'(x)(x-a))$ we have $[p'(x)(x-a)]=[p'(a)(x-a)]+c$.
By Lemma~\ref{ratsupp} we have that $c$ lies in the image of $\Psi_1$ and since all roots of $p'$ lie in $k$ we have that $c\in\Psi_1(GW(L)K_1^{MW}(k)).$

Since $\Psi_1$ is injective, we get $\la\pm 1\ra[a\alpha]=\la p'(a)\ra[a\alpha]+\Phi_1(c).$ Thus $\la\pm 1\ra([a]+\la a\ra[\alpha])=\la p'(a)\ra[a]+\la p'(a)a\ra[\alpha]+\Phi_1(c).$ Since $\Phi_1\in GW(L)K_1^{MW}(k)$. So we expressed $\la\pm 1\ra[a]$ as a sum of elements in $A$ and $GW(L)\cdot K_1^{MW}(k).$
\end{proof}

\begin{lem}\label{MWcomm}
Suppose $F/k$ is a finite field extension. Then $F\otimes_k F=\coprod F_i$ for some fields $F_i$. Then the following diagram commutes in Milnor-Witt $K$-theory:
\[
\xymatrix{
\coprod K_*^{MW}(F_i)\ar[rr]^{\sum_i Tr^{F_i}_F} && K_*^{MW}(F)\\
K_*^{MW}(F)\ar[u]^{diag}\ar[rr]^{Tr^F_k} && K_*^{MW}(k)\ar[u]
}\eqno{(*)}
\]
\end{lem}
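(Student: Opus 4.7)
The plan is to reduce to a simple extension $F = k(\alpha)$ (available since $\charr k = 0$) and invoke Morel's construction of the geometric transfer $Tr^F_k = \tau^F_k(\alpha)$ via residues in the Gersten sequence for $K_*^{MW}$. Let $p \in k[t]$ be the minimal polynomial of $\alpha$; over $F$ it factors as $p(t) = \prod_j p_j(t)$ into distinct monic irreducibles, so $F \otimes_k F = \prod_j F_j$ with $F_j = F[t]/(p_j)$, and the diagonal pullback in $(*)$ is the product of the restrictions $r_j \colon K_*^{MW}(F) \to K_*^{MW}(F_j)$ induced by $a \mapsto a \otimes 1$.

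By Morel's definition, $Tr^F_k(y) = -\partial^{-1/t}_\infty(\tilde{y})$, where $\tilde{y} \in K_{*+1}^{MW}(k(t))$ is any preimage under $\partial_{(p)}^p$ in the Gersten sequence of \cite[Theorem 2.24]{Morel} with trivial residues at all other finite closed points. Set $\tilde{y}' := \text{res}_{F/k}(\tilde{y}) \in K_{*+1}^{MW}(F(t))$. By the compatibility of the Gersten residues with flat base change (part of the cycle module structure on $K_*^{MW}$ verified by Morel), the finite residues of $\tilde{y}'$ are exactly $\partial_{(p_j)}^{p_j}(\tilde{y}') = r_j(y)$ at the $(p_j)$, and zero elsewhere.

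Applying the Weil reciprocity formula for $K_*^{MW}$ (a consequence of Morel's construction of transfers) to $\tilde{y}'$ then yields
\[
\partial^{-1/t}_\infty(\tilde{y}') + \sum_j Tr^{F_j}_F(r_j(y)) = 0,
\]
so $\sum_j Tr^{F_j}_F(r_j(y)) = -\partial^{-1/t}_\infty(\tilde{y}')$. Since the point $\infty$ is preserved under base change $k \to F$, we have $\partial^{-1/t}_\infty(\tilde{y}') = \text{res}_{F/k}(\partial^{-1/t}_\infty(\tilde{y}))$, and therefore
\[
\sum_j Tr^{F_j}_F(r_j(y)) = -\text{res}_{F/k}\bigl(\partial^{-1/t}_\infty(\tilde{y})\bigr) = \text{res}_{F/k}(Tr^F_k(y)),
\]
which is the commutativity of $(*)$.

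The main obstacle is the base-change compatibility of the Gersten residues $\partial^\pi_{(q)}$; this is nontrivial because Milnor–Witt residues carry canonical-bundle twists. It is however part of Morel's cycle-module verification for $K_*^{MW}$: flat pullback commutes with residues, and the factorization $p = \prod_j p_j$ in $F[t]$ matches the orientation data at $(p) \in \A^1_k$ with the sum of the orientation data at the disjoint points $(p_j) \in \A^1_F$, so the naturality of Morel's formula passes through base change with no additional twist corrections.
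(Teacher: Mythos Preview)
Your overall strategy matches the paper's: reduce to a simple extension $F=k(\alpha)$, lift to $K_{*+1}^{MW}(k(t))$ via Morel's Gersten sequence, base-change to $F(t)$, and read off the result from the residues. But you make two compensating errors that leave a genuine gap.

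First, the identification ``$Tr^F_k=\tau^F_k(\alpha)$'' is not correct. The transfer in the statement is Morel's \emph{absolute} (canonical) transfer of \cite[Definition~3.26, Theorem~3.27]{Morel}, which differs from the geometric transfer $\tau^F_k(\alpha)$ by the twist $\langle p'(\alpha)\rangle=\langle\omega_0(\alpha)\rangle$. Second, your assertion that flat base change of the residues $\partial^{p}_{(p)}$ goes through ``with no additional twist corrections'' is false: Morel's Axiom~B3 (which the paper invokes) says that for $p=\prod_j p_j$ in $F[t]$ one has
\[
\partial^{p_j}_{(p_j)}\bigl(\mathrm{res}_{F/k}(\tilde y)\bigr)=\langle\,\overline{p/p_j}\,\rangle\cdot r_j\bigl(\partial^{p}_{(p)}(\tilde y)\bigr),
\]
so a twist \emph{does} appear. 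These two twists cancel, but only because $\overline{p/p_j}=p'(\alpha_j)/p_j'(\alpha_j)$; this is exactly the computation the paper carries out. In your write-up the cancellation is asserted, not proved.

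Concretely, the paper first establishes the diagram for \emph{geometric} transfers with the twisted vertical map $\sum_j\langle\overline{p/p_j}\rangle\cdot r_j$, and then observes that passing to absolute transfers absorbs this twist via $Tr^{F_j}_F=\tau^{F_j}_F(\alpha_j)\circ\langle p_j'(\alpha_j)\rangle$ and $Tr^{F}_k=\tau^{F}_k(\alpha)\circ\langle p'(\alpha)\rangle$, yielding the untwisted diagram~$(*)$. You should make this two-step passage explicit rather than folding it into the phrase ``matches the orientation data''.
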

\begin{proof}
Since $F/k$ is separable, for every monic irreducible $p_1\in k[t]$ and monic irreducible $p\in F[t]$ such that the point $(p)\in\A^1_F$ lies over $(p_1)\in\A^1_k$ (we will write $p|_{p_1}$ in this case) we have that the residue $\overline{p_1/p}$ is invertible in the field $F[t]/(p).$ Then by~\cite[Axiom B3]{Morel} the following diagram commutes:
\[
\xymatrix{
K_*^{MW}(F) & K_{*+1}^{MW}(F(t))\ar[l]_{-\partial_{\infty}^{-1/t}}\ar[r]^{\oplus \partial_{(p)}^p} & \oplus_{p} K_*^{MW}(F[t]/p)\\
K_*^{MW}(k)\ar[u]^{j} & K_{*+1}^{MW}(k(t))\ar[l]_{-\partial_{\infty}^{-1/t}}\ar[r]^{\oplus \partial_{(p_1)}^{p_1}}\ar[u]^{j} & \oplus_{p_1} K_*^{MW}(F[t]/p_1)\ar[u]_{\sum_{p|_{p_1}}\la \overline{p_1/p}\ra\cdot j}
}
\]
where $j$ denotes the pullback map for the field extension. Now take a generator $\alpha$ of $F$ over $k$ and $p$ its minimal polynomial. This gives embedding $\Spec F\to\A^1_k.$ Taking the fiber product with $\A^1_F\to\A^1_k$ we get an embedding $\coprod\Spec F_i=\Spec F\otimes_k F\to\A^1_F$. This gives a choice of generators $\alpha_i$ of $F_i$. Let $p_i$ be the monic irreducible polynomials for $\alpha_i.$ This implies the commutative diagram for geometric transfers:
\[
\xymatrix{
K_*^{MW}(F) & \coprod K_*^{MW}(F_i)\ar[l]_{\sum \tau^{F_i}_F(\alpha_i)}\\
K_*^{MW}(k)\ar[u]^{j} & K_*^{MW}(F)\ar[l]_{\tau^{F}_k(\alpha)}\ar[u]_{\sum_i \la\overline{p/p_i}\ra\cdot j}
}
\]
Since $\overline{p/p_i}=p'(\alpha_i)/(p_i'(\alpha_i))$ the latter diagram implies the commutativity of the diagram with canonical transfers $(*).$
\end{proof}

So it is sufficient to check that the map $\Psi$ is surjective. We will need the following analogue of~\cite[Lemma 5.11]{Voevodsky}:

\begin{lem}\label{Maincomm}
 Let $L/k$ be a finite field extension. There is a transfer map $Tr^L_k\colon K_n^{MW}(L)\to K_n^{MW}(k)$ given by~\cite[Definition 3.26]{Morel} and~\cite[Theorem 3.27]{Morel}. Then the diagram commutes:
\[
\xymatrix{
K_n^{MW}(L)\ar[rr]^{\Psi_L}\ar[d]^{Tr^L_k} && H_0(\ZZ F(\Delta^{\bullet}_L,\Gm^{\wedge n}))\ar[d]^{tr_{L/k}}\\
K_n^{MW}(k)\ar[rr]^{\Psi} && H_0(\ZZ F(\Delta^{\bullet}_k,\Gm^{\wedge n}))
}
\]
\end{lem}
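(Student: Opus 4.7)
The plan is a three-step reduction followed by a generator-wise check. First, by Lemma~\ref{transitive} on the framed side together with transitivity of the Milnor--Witt transfer (which follows from Morel's~\cite[Theorem 3.27]{Morel}), I may assume $[L:k]=l$ is prime. Second, I reduce to the case when $k$ has no prime-to-$l$ extensions by passing to the colimit along a tower $k\subseteq k_1\subseteq k_2\subseteq\dots$ of finite extensions, each of degree coprime to $l$, whose union has no prime-to-$l$ extensions. At each stage, the base-change compatibility on the Milnor--Witt side is Lemma~\ref{MWcomm}, and on the framed side one establishes an analogous base-change formula by unwinding $c_{L/k}$ along $\Spec F\to\Spec k$ and identifying its pullback with $\sum_i c_{F_i/F}$ where $F\otimes_k L=\prod_i F_i$. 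Both functors commute with filtered colimits in the base field.

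Under these assumptions, I induct on $n$. The case $n=0$ is exactly Lemma~\ref{com0}. For $n=1$, invoke Lemma~\ref{K1decomp}: any element of $K_1^{MW}(L)$ is a sum of terms of two types, namely (i) $\langle u\rangle\cdot j^*[a]$ with $u\in L^\times$, $a\in k^\times$, and (ii) $\langle\pm\omega_0(\alpha)\rangle[\alpha]$ with $\alpha\in L^\times$. Terms of type (i) collapse to the $n=0$ case by the two parallel projection formulas, namely Lemma~\ref{projection} on the framed side and Morel's Axiom B4 on the Milnor--Witt side. For terms of type (ii), Lemma~\ref{tracegen} computes the image on the framed side as $\langle(N(\alpha)-1)^{l-1}\rangle[x-N(\alpha)]$, while Lemma~\ref{taucomp} combined with Morel's definition of $Tr^L_k$ yields $\langle(-1)^{l+1}\rangle[N(\alpha)]$ on the Milnor--Witt side. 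The required equality $\Psi(\langle(-1)^{l+1}\rangle[N(\alpha)])=\langle(N(\alpha)-1)^{l-1}\rangle[x-N(\alpha)]$ then follows by absorbing the factor $\langle(1-N(\alpha))^{l-1}\rangle$ via Corollary~\ref{Steinbergin1}.

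For $n\geq 2$, the presentation of $K^{MW}_{\geqslant 0}(L)$ as $\operatorname{Tens}_{GW(L)}K_1^{MW}(L)$ modulo the Steinberg relation (from~\cite[Remark 2.2]{Morel}) reduces an arbitrary element to a product of $n$ elements of $K_1^{MW}(L)$; iterated application of Lemma~\ref{K1decomp} together with the two projection formulas then pushes the problem down to the $n\leq 1$ cases already treated.

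The principal obstacle is step two: establishing the framed-side analogue of the base-change formula. This amounts to identifying $j^*(c_{L/k})$ with $\sum_i c_{F_i/F}$ in $H_0(\ZZ F(\Delta^\bullet_F,\Spec F\otimes_k L))$, and proceeds by choosing compatible bases for the components of $F\otimes_k L$ and transforming the framing via the basis-change machinery of Lemmas~\ref{ccorrect}, \ref{addone}, \ref{SL}, and \ref{alphachange}. Once in place, the remaining steps are essentially bookkeeping with the preliminary lemmas.
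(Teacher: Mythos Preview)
Your treatment of the core case (prime degree over a field with no prime-to-$l$ extensions) is essentially the paper's: you invoke Lemma~\ref{com0}, Lemma~\ref{K1decomp}, Lemma~\ref{tracegen}, Lemma~\ref{taucomp}, Corollary~\ref{Steinbergin1}, and the two projection formulas in the same way. The framed base-change identity you single out as the ``principal obstacle'' is also the computation the paper performs, namely $c_{L/k}\circ pr_k=\sum_i pr_L\circ c_{L_i/L}$.

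The reductions, however, do not go through as written. Step~1 asserts that transitivity (Lemma~\ref{transitive} and Morel's analogue) reduces to prime degree. That would require every finite extension $L/k$ to factor as a tower of prime-degree subextensions, which is false in general: if the Galois closure has group $G$ and $L$ corresponds to a maximal subgroup of composite index (e.g.\ $G=\mathrm{PSL}_2(7)$ and a maximal subgroup of index~$8$), there is no intermediate field at all. Step~2 is also incomplete: establishing base-change compatibility and passing to the colimit $k_\infty$ shows only that the discrepancy $t=tr_{L/k}(\Psi_L(a))-\Psi(Tr^L_k(a))$ dies in $H_0(\ZZ F(\Delta^\bullet_{k_\infty},\Gm^{\wedge n}))$. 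You have no injectivity of the pullback $H^n(k)\to H^n(k_\infty)$ at this stage, so nothing forces $t=0$ in $H^n(k)$.

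The paper's reduction is different and supplies exactly what is missing. It runs an induction on $[L:k]$ and, for the inductive step, combines two vanishing statements: base change to the maximal prime-to-$l$ extension $L'$ (where the directly handled case applies) gives $m_\epsilon t=0$ for some $m$ prime to $l$ via Lemma~\ref{transfer1} and the projection formula; base change to $L$ itself (where $L\otimes_k L=\coprod L_i$ with each $[L_i:L]<[L:k]$, so the inductive hypothesis applies) gives $t_L=0$, hence $[L:k]_\epsilon t=0$. The key arithmetic input is that $m_\epsilon$ and $l_\epsilon$ generate the unit ideal in $GW(k)$ when $\gcd(m,l)=1$, forcing $t=0$. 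Your base-change identity is used here, but as the engine of the induction-on-degree step rather than as a colimit device.
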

\begin{proof}
We use the same strategy as~\cite[Lemma 5.11]{Voevodsky}.

First, consider the case when $L:k=l$ is prime and $k$ has no extensions of degree prime to $l$. Then by~\cite[Lemma 2.25]{Morel} we have that $K_n^{MW}(L)$ is generated by the elements of the form $\eta^m[a_1]\ldots[a_{n+m}]$ where $a_2,\ldots,a_{n+m}\in k^{\times}.$ Then, the projection formulas for $Tr^L_k$ and $tr_{L/k}$ imply that it is sufficient to check the commutativity of the diagram for $n=0,1.$ The case $n=0$ is given by~\ref{com0}.

For the case $n=1$ note that for any $a\in L^{\times}$ with minimal polynomial $p$ over $k$ we have
$tr_{L/k}(\Psi_1(\la p'(a)\ra[a]))=tr_{L/k}(\la p'(a)\ra[x-a])\stackrel{\ref{tracegen}}=\la(N(a)-1)^{l-1}\ra[x-N(a)]=\la(-1)^{l-1}\ra\la(1-N(a))^{l-1}\ra[x-N(a)]\stackrel{\ref{Steinbergin1}}=\la(-1)^{l-1}\ra[x-N(a)]=
\Psi_1(\la(-1)^{l-1}\ra[N(a)])\stackrel{\ref{taucomp}}=\Psi_1(\tau^L_k(a)([a]))=
\Psi_1(Tr^L_k(\la p'(a)\ra[a])).$

For any $a\in L^{\times}$ and $b\in k^{\times}$ we have $tr_{L/k}(\Psi_1(\la a\ra[b]))=tr_{L/k}(\la a \ra[x-b])\stackrel{\ref{projection}}=tr_{L/k}(\la a\ra)[x-b]\stackrel{\ref{com0}}=\Psi_1(Tr^L_k(\la a\ra)[b]).$ Then the diagram commutes by~\ref{K1decomp}.

In the general case we use the standard reduction. Let $L'$ be the maximal prime-to-$l$ field extension of $k$.
Denote $H_0(\ZZ F(\Delta^{\bullet}_L,\Gm^{\wedge n}))$ by $H^{n}(L)$. Then for any $x$ in the kernel of $H^{n}(k)\to H^{n}(L')$
we have that $m_{\epsilon}x=0$ for some $m$ prime to $l$ by~\ref{transfer1}.
Let $t=tr_{L/k}(\Psi(a))-\Psi(Tr^L_k(a)).$ Then $t_{L'}=0$ by the previous case, so $m_{\epsilon}t=0$ for some $m$ prime to $l$. Then
since $(m_{\epsilon},l_{\epsilon})=1$ in $GW(k)$, it is sufficient to check that $t_L=0.$
Note that $L\otimes_k L=\coprod L_i$ where $L_i$ are field extensions of $L$ with $[L_i:L]<[L:k].$ The two diagrams commute:
\[
\xymatrix{
K_n^{MW}(L)\ar[r]\ar[d]^{Tr^L_k} & \oplus K_n^{MW}(L_i)\ar[d]^{\sum Tr^{L_i}_L} &&& H^n(L)\ar[r]\ar[d]^{tr_{L/k}} & \oplus H^n(L_i)\ar[d]^{\sum tr_{L_i/L}}\\
K_n^{MW}(k)\ar[r] & K_n^{MW}(L)                                                   &&& H^n(k)\ar[r] & H^n(L)
}
\]
The left diagram commutes by~\ref{MWcomm} To show the commutativity of the right diagram check that $c_{L/k}\circ pr_k=\sum_i pr_L\circ c_{L_i/L}$ in $H_0(\ZZ F(\Delta^{\bullet}_L,\Spec L)).$ Choose a generator $\alpha$ of $L$. Then $c_{L/k}\circ pr_k=(L\otimes_k\A^1_L,(x-\alpha)\circ pr_{\A^1_L},pr_L)=(\coprod \A^1_{L_i},\coprod x-\alpha_i,pr_L)=\sum_i pr_L\circ c_{L_i/L},$ where $\alpha_i$ are the generators of $L_i$ given by the pullback of the diagram
\[\Spec L\stackrel{\alpha}\to\A^1_k\leftarrow\A^1_L.\]

By induction on $[L:k]$, the diagram commutes for all extensions $L_i/L$, so we have
\[t_L=\sum_i tr_{L_i/L}(\Psi(a))-\Psi(Tr^{L_i}_L(a))=0.\] So $t_L$ is zero, then $t=0,$ so the diagram is commutative.
\end{proof}

\begin{prop}\label{Psisurj}
The map $\Psi\colon K_*^{MW}(k)\to H_0(\ZZ F(\Delta^{\bullet}_k,\Gm^{\wedge *}))$ is surjective.
\end{prop}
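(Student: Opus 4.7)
The plan is to combine the reduction results from Sections~\ref{MovingSection}--\ref{LevelOneSection} with the transfer compatibility of Lemma~\ref{Maincomm}: reduce any class in $H_0(\ZZ F(\Delta^{\bullet}_k,\Gm^{\wedge n}))$ to an explicit normal form, recognise that normal form as $tr_{L/k}$ of $\Psi_L$ applied to an evident Milnor--Witt symbol, and then conclude by Lemma~\ref{Maincomm}. The case $n=0$ is already settled by Proposition~\ref{m0}, so I would focus on $n\geqslant 1$.

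First I would apply the moving lemma~\ref{Moving} (valid since $\Gm^{n}\subset\A^{n}$) to write an arbitrary class as a $\ZZ$-linear combination of simple level-$1$ correspondences $c\in Fr_1(k,\Gm^{n})$ with connected, hence single-point, support $Z$. Lemma~\ref{retract} supplies a projection $U\to\Spec L$ with $L=k(Z)$; Lemma~\ref{red1} then brings $c$ into the shape $(\A^1_L,\mu(t-\lambda),f)$ with $L=k(\lambda)$ and $\mu\in L^{\times}$; and Lemma~\ref{anyf}, applicable since $\Gm^{n}$ is open in $\A^{n}$, lets me replace $f$ by the constant value $f(Z)=(a_1,\ldots,a_n)\in(L^{\times})^n$. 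Thus the problem reduces to exhibiting every $c=(\A^1_L,\mu(t-\lambda),(a_1,\ldots,a_n))$ in the image of $\Psi$.

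For such $c$ I would take $y:=\la\mu\ra\cdot[a_1]\cdots[a_n]\in K_n^{MW}(L)$ and aim to show that $c\sim tr_{L/k}(\Psi_L(y))$. Unwinding the multiplicative structure of Section~\ref{MultStruct} together with Proposition~\ref{m0} gives
\[
\Psi_L(y)=(\A^1_L\times_L\Gm^{n}_L,\;(\mu t,\,x_1-a_1,\ldots,x_n-a_n),\;(x_1,\ldots,x_n)),
\]
and the composition with $c_{L/k}=(\A^1_L,s-\lambda,pr_L)$, following the calculation in the proof of Lemma~\ref{projection}, yields
\[
tr_{L/k}(\Psi_L(y))=(\A^2_L\times_L\Gm^{n}_L,\;(s-\lambda,\,\mu t,\,x_1-a_1,\ldots,x_n-a_n),\;(x_1,\ldots,x_n)).
\]
In parallel, $n$ applications of Lemma~\ref{addone} combined with Lemma~\ref{anyf}, followed by one $\sigma_{\Gm^{n}}$-stabilisation, transform $c$ into the level-$(n+2)$ datum
\[
(\A^1_L\times\Gm^{n}\times\A^1,\;(\mu(t-\lambda),\,x_1-a_1,\ldots,x_n-a_n,\,z),\;(x_1,\ldots,x_n)).
\]
Applying Lemma~\ref{SL} with $\operatorname{diag}(\mu,\mu^{-1})\in\gSL_2(L)$ to $tr_{L/k}(\Psi_L(y))$ turns $(s-\lambda,\mu t)$ into $(\mu(s-\lambda),t)$, and a sequence of $90^{\circ}$-rotations in $\gSL$ (in the spirit of Corollary~\ref{swap}) brings the result into the same form as the stabilised $c$. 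Hence $c\sim tr_{L/k}(\Psi_L(y))$.

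Lemma~\ref{Maincomm} then gives $tr_{L/k}(\Psi_L(y))=\Psi(Tr^L_k(y))$, exhibiting $c$ in the image of $\Psi$; summing over the simple level-$1$ summands completes the surjectivity. The principal obstacle is not conceptual but the careful bookkeeping in the previous paragraph: one must track fibre products, \'etale neighbourhoods, framings, and target maps simultaneously so that the $\gSL$-moves, stabilisations, and permutations of coordinates align the two explicit representatives on the nose, with possible sign corrections absorbed by parity arguments. Every individual manipulation is supplied by Lemmas~\ref{SL}, \ref{addone}, \ref{anyf}, and~\ref{alphachange}; the burden is purely combinatorial, namely arranging them in the right order.
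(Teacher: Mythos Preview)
Your proposal is correct and shares the paper's overall architecture: reduce via Lemma~\ref{Moving} and Lemma~\ref{red1} to the normal form $c=(\A^1_L,\mu(t-\lambda),f(Z))$, recognise $c$ as $tr_{L/k}$ of something in the image of $\Psi_L$, and conclude by Lemma~\ref{Maincomm}. The difference is in how the middle step is carried out. You explicitly name $y=\la\mu\ra[a_1]\cdots[a_n]\in K_n^{MW}(L)$ and then verify $c\sim tr_{L/k}(\Psi_L(y))$ by a chain of $\gSL$-moves, stabilisations, and applications of Lemmas~\ref{addone} and~\ref{anyf}. The paper instead observes that the \emph{same data} $(\A^1_L,\mu(t-\lambda),f(Z))$, read as an element $c'\in Fr_1(\Spec L,\Gm^n)$ over the base field $L$, satisfies $tr_{L/k}(c')\sim c$ by a single use of Lemma~\ref{addone}; and since over $L$ the support of $c'$ is the rational point $t=\lambda$, Lemma~\ref{ratsupp} immediately places $c'$ in the image of $\Psi_L$ without ever naming a preimage. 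Your route works, but the bookkeeping you flag as the ``principal obstacle'' is precisely what the paper avoids: invoking Lemma~\ref{ratsupp} makes the explicit identification of $y$ and the coordinate-by-coordinate matching of representatives unnecessary.
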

\begin{proof}
By lemma~\ref{red1} the group $H_0(\ZZ F(\Delta^{\bullet}_k,\Gm^{\wedge n}))$ is generated by the correspondences of the form $c=(\A^1_L,\mu(x-\lambda),f(Z)\circ pr_L)$ where $L=k(\lambda)$ and $f(Z)\in \Gm^{n}(L).$ Take a correspondence $c'\in Fr(L,\Gm^n)$ represented by the data $c'=(\A^1_L,\mu(x-\lambda),f(Z)\circ pr_L).$ Then $tr_{L/k}(c')$ is given by the data $(\A^2_L,\mu(x-\lambda),x_2-\lambda,f(Z)\circ pr_L)\stackrel{\ref{addone}}\sim (\A^1_L,\mu(x-\lambda),f(Z)\circ pr_L)=c.$

Lemma~\ref{ratsupp} implies that $c'$ is equivalent to a standard correspondence, thus $c'$ lies in the image of $\Psi_L\colon K_n^{MW}(L)\to H_0(\ZZ F(\Delta^{\bullet}_L,\Gm^{\wedge n}))$. Then by Lemma~\ref{Maincomm} $c=tr_{L/k}(c')$ lies in the image of $\Psi.$
\end{proof}

This immediately implies the main theorem:
\begin{thm}\label{main}
The maps $\Phi$ and $\Psi$ are mutually inverse ring isomorphisms between $K_{*\geqslant 0}^{MW}(k)$ and $H_0(\ZZ F(\Delta^{\bullet}_k,\Gm^{\wedge *}))$
\end{thm}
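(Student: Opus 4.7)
The plan is to combine Proposition~\ref{Psisurj} (surjectivity of $\Psi$) with the identity $\Phi\circ\Psi=\mathrm{id}$ on $K_{*\geqslant 0}^{MW}(k)$. Since $\Psi$ is a ring homomorphism by construction in Subsection~\ref{psiconst} (it factors through the tensor algebra of $K_1^{MW}(k)$ over $GW(k)$ modulo Steinberg), once both facts are in place $\Psi$ will be a bijective graded ring homomorphism and $\Phi$ its inverse, hence automatically a graded ring homomorphism as well. So the only thing to verify is $\Phi\circ\Psi=\mathrm{id}$.

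To check this, I would observe that $K_{\geqslant 0}^{MW}(k)$ is generated as an abelian group by the elements $\langle a_0\rangle\cdot[a_1]\cdots[a_n]$ with $a_i\in k^{\times}$ and $n\geqslant 0$. Since $\Psi$ is multiplicative and $\Psi_0(\langle a_0\rangle)=\langle a_0 x\rangle$ by Proposition~\ref{m0}, the image of such a generator under $\Psi$ is represented by the explicit correspondence
\[
c=(\A^{n+1}_k,\,(a_0x_0,\,x_1-a_1,\,\ldots,\,x_n-a_n),\,\mathrm{id})\in Fr_{n+1}(\Spec k,\Gm^{n}).
\]
The regular sequence defining $c$ splits as a product of one-variable sequences, so its Koszul complex is the tensor product of the individual Koszul complexes. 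Tracing through the construction of $\Phi$ in Subsection~\ref{phiconst}, the factor $a_0x_0$ contributes $\langle a_0\rangle\in GW(k)$ (which is the degree-zero computation carried out in Proposition~\ref{m0}), each factor $x_i-a_i$ contributes the trivial quadratic form as in Lemma~\ref{Phiongen}, and the identity map to $\Gm^n$ contributes the Milnor--Witt symbols $[a_1]\cdots[a_n]$. Combining these via the multiplicativity of the devissage isomorphism and of the projective pushforward along $\PP^{n+1}\to\Spec k$ yields
\[
\Phi\bigl(\langle a_0 x\rangle\cdot[x-a_1]\cdots[x-a_n]\bigr)=\langle a_0\rangle\cdot[a_1]\cdots[a_n],
\]
which is the required equality on this generator. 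Additivity of $\Phi$ and $\Psi$ then extends $\Phi\circ\Psi=\mathrm{id}$ to all of $K_*^{MW}(k)$. Combined with surjectivity of $\Psi$ from Proposition~\ref{Psisurj}, this forces $\Psi$ to be bijective and $\Phi$ to coincide with its inverse.

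The main obstacle is the bookkeeping in the Koszul--devissage--pushforward computation above. Although each of the four ingredients entering $\Phi$ (Koszul resolution, devissage, \'etale pullback, projective pushforward) is compatible with tensor products of regular sequences and with products of pushforwards, carrying out the computation requires carefully tracking the trivializations of canonical bundles and the identifications $W_{fl}(\Os_{U,z})\cong W(k(z),\omega_{z/U})$ used in Subsection~\ref{phiconst}. Since these compatibilities are essentially the same as those already established in the one-variable computation of Proposition~\ref{m0} and Lemma~\ref{Phiongen}, no conceptually new input is needed, and the argument closes.
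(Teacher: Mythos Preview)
Your proposal is correct and follows essentially the same route as the paper: the paper's proof also invokes Proposition~\ref{Psisurj} for surjectivity, Lemma~\ref{Phiongen} (together with the construction of $\Psi$) for $\Phi\circ\Psi=\mathrm{id}$, and then deduces that $\Phi$ is a ring map from the fact that it inverts the ring map $\Psi$. Your write-up is simply more explicit about the $GW(k)$-linearity of $\Phi$ needed to pass from Lemma~\ref{Phiongen} to the full identity $\Phi\circ\Psi=\mathrm{id}$, which the paper leaves implicit.
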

\begin{proof}
By Lemma~\ref{Phiongen} and construction on $\Psi$ we have that $\Phi\circ\Psi=id_{K^{MW}(k)}$. Since $\Psi$ is surjective by~\ref{Psisurj}, this implies that $\Psi$ and $\Phi$ are mutually inverse isomorphisms of groups. By its construction, $\Psi$ is a ring homomorphism, so $\Phi$ is also a ring homomorphism.
\end{proof}

\bibliographystyle{plain}

\end{document}